\newtheorem{maintheorem}{Theorem}
\newtheorem{conjecture}{Conjecture}
\newtheorem{theorem}{Theorem}[section] \newtheorem{theorema}{Theorem}
\newtheorem*{theorem*}{Theorem} \newtheorem{lemma}[theorem]{Lemma}
\newtheorem{proposition}[theorem]{Proposition}
\newtheorem{corollary}[theorem]{Corollary}
\theoremstyle{definition}{

   }
\theoremstyle{remark}{

  \newtheorem{remark}{Remark}[section] \newtheorem*{remark*}{Remark}
   }
\newenvironment{enumeratei}{\begin{enumerate}[\upshape (i)]}
  {\end{enumerate}}
\newcommand{\ep}{\epsilon}
\newcommand{\even}{{\rm even}}
\newcommand{\odd}{{\rm odd}}
\newcommand{\E}{\mathbb E}
\renewcommand{\P}{{\mathbb P}}
\newcommand{\V}{{\mathbb V}}
\newcommand{\one}{{1\!\!1}} 
\renewcommand{\i}{{\rm i}}
\newcommand{\Bigoneb}[1]{\one\Bigl\{ #1 \Bigr\}}
\newcommand{\B}{{\rm B}}
\renewcommand{\L}{{\mathbb L}}
\newcommand{\rmi}{\mathrm{i}} 
\newcommand{\st}{\,:\,}
\newcommand{\C}{{\mathbb C}}
\newcommand{\R}{{\mathbb R}}
\renewcommand{\Re}{{\rm Re}} 
\renewcommand{\Im}{{\rm Im}} 
\newcommand{\Z}{{\mathbb Z}} 
\newcommand{\deq}{:=} 
\renewcommand{\epsilon}{\varepsilon}
\begin{document}
\title[Counting Partial Hadamard Matrices]{A Fourier-analytic Approach
  to Counting Partial Hadamard Matrices}

\date{\today}

\author{Warwick de Launey} \address{Center for Communication Research,
  4320 Westerra Court, San Diego, California 92121-1969}
\email{warwickdelauney@earthlink.net}

\author{David A. Levin}
\address{Department of Mathematics, University
 of Oregon, Eugene, Oregon 97402-1222}
\thanks{D.A.L. was partially supported by a Summer Research Award from the University of Oregon}
\email{dlevin@uoregon.edu}
\urladdr{http://www.uoregon.edu/\~{}dlevin}

\keywords{partial Hadamard matrices, random walks}
\subjclass[2000]{05B20, 15B10, 15B34, 60G50}


\begin{abstract}
  In this paper, we study a family of lattice walks which are related
  to the Hadamard conjecture.  There is a bijection between
  paths of these walks which originate and terminate at the origin and
  equivalence classes of partial Hadamard matrices.    Therefore,  the
  existence of partial Hadamard matrices can be proved by showing that
  there is positive probability of a random walk returning to the
  origin after a specified number of steps.
  Moreover, the number of these designs can be approximated by estimating
  the return probabilities.    We use the inversion formula for
  the Fourier transform of the random walk to provide such estimates.
  We also include here an upper bound, derived by elementary methods,
  on the number of partial Hadamard.
\end{abstract}
\maketitle
\section{Introduction}
In this paper, we introduce a family of non-symmetric lattice random
walks with importance to combinatorial design theory. 
Paths of these walks starting and ending at the origin
correspond to partial Hadamard matrices (see below for the
definition).  These walks provide a tool for counting the
number of partial Hadamard matrices, without recourse to the usual
constructive methods adopted in design theory.

For non-negative integers $n$ and $t$, a \emph{partial Hadamard
matrix} is an $n \times t$ matrix with $\pm 1$ entries such that the
inner product between any two distinct rows equals zero.  Note that,
since the rows of a partial Hadamard matrix $D$ form a set of $n$
independent $t$-dimensional real vectors, we must have $t\geq n$.
Notice also that if we negate all the entries in a column of $D$, then
the resulting matrix is also a partial Hadamard matrix.  We say the
two matrices are {\em column-negation equivalent}.  Column-negation
equivalence divides the set of $n\times t$ partial Hadamard matrices
into equivalence classes of cardinality $2^t$.

We now define our walk, and show that each distinct column-negation
equivalence class of $n\times t$ partial Hadamard matrices corresponds to a
distinct walk of length $t$ terminating at $0$.  For an integer 
$n \geq 2$, set $d \deq \binom{n}{2}$, set $\V_m \deq \{-1,1\}^m$, and
let $Z: \V_n \to \V_d$ be defined by
\begin{equation} \label{Eq:ZDefn}
  Z(y) = (y_1y_2, y_1y_3, \ldots, y_{n-1} y_n) \,,
\end{equation}
so that the components of $Z(y)$ enumerate all pairwise products
between the components of $y$.  If 
$Y = [y^{(1)} \cdots y^{(t)}]$ is an $n \times t$ matrix with $t$
column vectors $y^{(1)}, \ldots, y^{(t)}$ belonging to 
$\V_n$, then
\begin{equation*}
  Z(y^{(1)}) + \cdots + Z(y^{(t)}) = 0 
\end{equation*}
if and only if the inner product between any two rows of $Y$ is zero.

Let
\begin{equation*}
  M \deq \{ Z(y) \st  y \in \{-1,1\}^n \} \,,
\end{equation*}
then the map $Z:\V_n \rightarrow M$ is two-to-one, since
$Z(-y) = Z(y)$.  Indeed, the column-negation equivalence
class of $n\times t$ partial Hadamard matrices
\begin{equation*}
  [\pm y^{(1)},\pm y^{(2)},\dots,\pm y^{(t)}]
\end{equation*}
maps to the single $M$-sequence
\begin{equation*}
  ({m}^{(1)},
  {m}^{(2)},\dots,
  {m}^{(t)})
  =
  (Z(y^{(1)}),	     
  Z(y^{(2)}),\dots,
  Z(y^{(t)}))
\end{equation*}
of length $t$ such that $\sum_{i}^t m^{(i)}=0$.
Thus, the number of $n\times t$ partial Hadamard matrices is equal to
$2^{nt}$ times the probability that a random walk $(X_t)$ with
increments drawn uniformly from $M$ returns to the origin.

We write $P_n^t(x,y)$ for the $t$-step transition matrix
for $(X_t)$:
\begin{equation*}
  P_n^t(x,y) \deq \P(X_t = y \mid X_0 = x) \,.
\end{equation*}
The random walk $(X_t)$ has a number of unusual features.  It has
dimension  $d=\binom{n}{2}$ but exponentially many (i.e., $2^{n-1}$)
possible increments, each with norm approximately $n/\sqrt{2}$.  
Thus, although, for $n$ fixed, the usual functional central limit theorem
applies (after proper rescaling of space and time, the walk converges in
distribution to Brownian motion), the walk has special discrete
structure which cannot be ignored.  In particular,
\begin{itemize}
  \item The support $\L_d$ of the walk, the smallest subgroup of $\Z^d$
  containing $M$, is a strict subgroup of $\Z^d$,
  \item the walk has period $4$, and
  \item the set of increments is non-symmetric.
\end{itemize}
Moreover, since there is no $n\times t$
partial Hadamard matrix for $t<n$ or $t\not\equiv0\pmod{4}$, we must
have $P_n^t(0,0) = 0$ if either $t < n$ or $t\not\equiv0\pmod{4}$.  It
is conjectured, but not proved, that the converse is true:
\begin{conjecture}
  For $n\geq3$, $P_n^t(0,0) > 0$ if and only if $t\geq n$ and $t
  \equiv 0 \pmod{4}$.
\end{conjecture}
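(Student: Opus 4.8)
The forward implication is already in hand from the discussion preceding the conjecture: the rows of an $n\times t$ partial Hadamard matrix are linearly independent over $\R$, so $t\ge n$, and any three pairwise orthogonal $\pm1$ vectors in $\R^t$ force $4\mid t$ (the classical argument behind the admissible orders of Hadamard matrices); hence $P_n^t(0,0)=0$ whenever $t<n$ or $t\not\equiv0\pmod4$. All of the content is the converse, which is the combinatorial assertion that an $n\times t$ partial Hadamard matrix exists whenever $n\ge3$, $t\ge n$ and $4\mid t$. The plan is to handle this in two regimes --- ``small'' and ``large'' $t$ --- and then to confront the fact that, unconditionally, the two do not meet.

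\emph{Small $t$.} Write $T_n\deq\{t\ge1:P_n^t(0,0)>0\}$. Concatenating the columns of an $n\times t_1$ and an $n\times t_2$ partial Hadamard matrix produces an $n\times(t_1+t_2)$ one (inner products of rows add), so $T_n$ is closed under addition; being a sub-semigroup of $4\Z_{>0}$ it equals $4\Z_{\ge n}$ the moment it contains every multiple of $4$ in the initial window $n\le t\lesssim 2n$ (once an additive semigroup of $\Z_{>0}$ contains $\ell,\ell+1,\dots,2\ell-1$ it contains all integers $\ge\ell$, then rescale by $4$). So I would reduce to producing $n\times t$ partial Hadamard matrices for the $O(n)$ values of $t$ in that window, and for each such $t$ it suffices to exhibit a Hadamard matrix of order $t$, since any $n$ of its rows form an $n\times t$ partial Hadamard matrix.

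\emph{Large $t$ --- the Fourier method of this paper.} Let $\mu$ be the uniform law on $M$, with characteristic function $\widehat\mu(\theta)=2^{-n}\sum_{y\in\V_n}\exp(\i\sum_{j<k}\theta_{jk}y_jy_k)$ on the dual group $\widehat{\L_d}$ of the support lattice. Fourier inversion writes $P_n^t(0,0)$ as the average of $\widehat\mu(\theta)^t$ over $\widehat{\L_d}$. Because the walk has period $4$, the set $\Theta=\{\theta:|\widehat\mu(\theta)|=1\}$ is a subgroup of order $4$ (the annihilator of the index-$4$ subgroup generated by the differences of increments), and for $\theta=\theta_0+\phi$ with $\theta_0\in\Theta$ one checks $\widehat\mu(\theta)=\widehat\mu(\theta_0)\widehat\mu(\phi)$ with $\widehat\mu(\theta_0)^4=1$, so $\widehat\mu(\theta)^t=\widehat\mu(\phi)^t$ at every return time $4\mid t$. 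A one-line computation gives that the mean increment is $0$ and the covariance is $I_d$, whence $\widehat\mu(\phi)=1-\tfrac12|\phi|^2+O(|\phi|^3)$ near the origin; each of the four peaks then contributes $\asymp_n t^{-d/2}$ to the average, all with the same positive sign, while the remainder of the integral is $\le(1-\delta_n)^t$ once $|\widehat\mu(\theta)|\le1-\delta_n$ off small neighbourhoods of $\Theta$. Collecting these, $P_n^t(0,0)\sim c_n\,t^{-d/2}>0$ for all $t\ge t_0(n)$, with a constant $c_n>0$.

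\emph{The main obstacle.} The two regimes do not meet by any means I know of: taking $n=4m$ and $t=n$, the conjecture says precisely that a Hadamard matrix of order $4m$ exists, so the conjecture restricted to $t=n$ is equivalent to the Hadamard conjecture, and there is no hope of closing the small-$t$ gap in general. Thus the honest target is the converse for $t\ge t_0(n)$, and there the hard part is the quantitative spectral gap: pinning down $\Theta$ exactly and proving $|\widehat\mu(\theta)|\le1-\delta_n$ off $\Theta$ with $\delta_n$ large enough --- and $t_0(n)$ small enough --- that $(1-\delta_n)^{t_0}$ is dominated by the main term $t_0^{-d/2}$. Since $\widehat\mu$ is an Ising-type quadratic exponential sum in the $2^{n-1}$ increments and the dimension is $d=\binom n2$, this estimate, and the main-term/error-term competition that fixes $t_0(n)$, is exactly where the peculiar discrete structure of the walk --- exponentially many, non-symmetric increments living on a proper sublattice of $\Z^d$ --- makes the local-central-limit bookkeeping genuinely delicate.
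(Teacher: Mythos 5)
You have correctly recognized that this statement is an \emph{open conjecture}, not a theorem, and that no complete proof is available in the paper or anywhere else. The paper itself makes exactly this point: the ``only if'' direction is elementary, and the ``if'' direction at $t=n$ is literally the Hadamard Conjecture, which is unsolved. Your honest assessment --- that the small-$t$ and large-$t$ regimes do not meet --- is the right conclusion, and the structure you describe is aligned with what the paper actually does: the forward direction is the remark preceding the conjecture; the large-$t$ Fourier analysis is Theorems~\ref{Thm:P00b} and~\ref{Thm:AsymFormula} (proved via the inversion formula, the decomposition of $\B_\pi$ into neighborhoods of $\Lambda$ and the residual region $R_\delta$, and the local-CLT-style competition between the main term $t^{-d/2}$ and the exponentially small tail); and the small-$t$ regime $t\lesssim 2n$ is handled only \emph{conditionally} via Theorem~\ref{Thm:A}, which imports the de Launey--Gordon / Graham--Shparlinski asymptotic existence result rather than proving existence from scratch.

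One imprecision worth flagging: you describe $\Theta=\{\theta:|\widehat\mu(\theta)|=1\}$ as ``a subgroup of order $4$.'' Read on the torus $\B_\pi=[-\pi,\pi]^d$, this set is $\Lambda=\Lambda_1+\Lambda_2^{\mathrm{even}}$, which has $2^{2d-n+1}$ elements (Lemmas~\ref{Lem:LambdaFacts}--\ref{Lem:Lambda2EvenSize}); it is only after identifying $\psi$'s \emph{values} on $\Lambda$ --- the multi-set $\{\pm1,\pm\i\}$, each with multiplicity $2^{2d-n-1}$ (Lemma~\ref{Lem:Psilambda1}) --- or equivalently after passing to the dual of the support lattice $\L_d$, that one sees the order-$4$ periodicity. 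This is not merely cosmetic: the factor $2^{2d-n+1}$ is precisely what produces the constant in the asymptotic count $N_{n,4t}=[1+o(1)]\,2^{2d-n+nt+1}(8\pi t)^{-d/2}$, so tracking $|\Lambda|$ (not just the period) is essential to the quantitative form of your ``large $t$'' step. Your semigroup observation for small $t$ is correct and is a nice complement to the paper, but as you say it only reduces the conjecture to exhibiting Hadamard matrices of order $t$ for all $4\mid t$ in a window $[n,2n]$, which is again the Hadamard Conjecture. So the proposal is not a proof --- none is possible with current knowledge --- but your diagnosis of where the difficulty lies is accurate and matches the paper's own discussion.
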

This conjecture is equivalent to the Hadamard Conjecture, which
asserts that there is a Hadamard matrix of order $n$ for all $n$
divisible by four.  The following result is implied by recent work by
de Launey and Gordon \cite{dLG:CHC} and Graham and
Shparlinski~\cite{GS:RSA}. 
\begin{theorema} \label{Thm:A}
  For $\epsilon>0$ and all sufficiently large $n$, 
  if $t - 2t^{\frac{113}{132} + \epsilon} > 2n$ and
  $t \equiv 0 \pmod{4}$, then
  $P_n^t(0,0) > 0$.
\end{theorema}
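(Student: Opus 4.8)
The plan is to discard the random walk and argue with partial Hadamard matrices directly. By the correspondence set up above, $P_n^t(0,0) = \P(X_t = 0 \mid X_0 = 0)$ equals $2^{-nt}$ times the number of $n \times t$ partial Hadamard matrices, so it suffices to produce a single such matrix whenever $t \equiv 0 \pmod 4$ and $t - 2t^{113/132+\epsilon} > 2n$ with $n$ large. Two elementary construction principles carry all the combinatorial weight. First, retaining only the first $n$ rows of a Hadamard matrix of order $h$ gives an $n \times h$ partial Hadamard matrix, so every Hadamard order $h \geq n$ occurs as a width. Second, if $A$ is $n \times t_1$ and $B$ is $n \times t_2$, both partial Hadamard, then the juxtaposition $[A \mid B]$ is $n \times (t_1 + t_2)$ and partial Hadamard, because the inner product of two distinct rows of $[A \mid B]$ is the sum of the corresponding inner products in $A$ and in $B$, hence $0 + 0 = 0$. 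Therefore it is enough to write $t$ as a sum of a bounded number of Hadamard orders, each at least $n$ --- and in the regime of Theorem~\ref{Thm:A} two summands suffice.

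The existence of such a representation is exactly where the cited works enter. Paley's constructions (Hadamard order $p+1$ for each prime $p \equiv 3 \pmod 4$, Hadamard order $2(q+1)$ for each prime power $q \equiv 1 \pmod 4$), together with closure of the set of Hadamard orders under doubling and under the Kronecker product, furnish a thick supply of Hadamard orders; de Launey and Gordon \cite{dLG:CHC} and Graham and Shparlinski \cite{GS:RSA} control how finely these orders are distributed among short intervals in fixed residue classes modulo small moduli. What I would extract from their estimates is: (a) for $x$ large, a Hadamard order lies in every interval $[x, x + x^{113/132+\epsilon}]$, which already furnishes a Hadamard order $h_1$ in the window $[\,n,\, n + t^{113/132+\epsilon}\,]$; and (b) the sharper, circle-method/sieve-type output that $t$ admits a representation $t = h_1 + h_2$ with both $h_1$ and $h_2$ Hadamard orders, subject only to the margin $t > 2n + 2t^{113/132+\epsilon}$ that places both $h_1 \in [\,n,\, n + t^{113/132+\epsilon}\,]$ and $h_2 = t - h_1 \in [\,t - n - t^{113/132+\epsilon},\, t - n\,]$ in the ranges where the machinery applies --- the two windows, one at each end, being what produces the factor $2$ in $2t^{113/132+\epsilon}$. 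Given the decomposition $t = h_1 + h_2$, form an $n \times h_1$ and an $n \times h_2$ partial Hadamard matrix by the first principle, juxtapose them by the second, and conclude that the number of $n \times t$ partial Hadamard matrices is positive, i.e. $P_n^t(0,0) > 0$.

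The hard part is wholly imported and purely number-theoretic: that the Hadamard (equivalently Paley-type) orders sit densely enough to force the binary representation $t = h_1 + h_2$ with both parts as small as $\approx n$. This is a genuinely deep ingredient --- a binary-Goldbach-flavoured statement, albeit for a set far thicker than the primes, resting on the best known bounds for primes, prime powers, and products of two primes in short intervals in arithmetic progressions --- and the exponent $113/132$ is merely the current record for that underlying problem, so any improvement in it would transfer verbatim to Theorem~\ref{Thm:A}. By contrast, an elementary approach --- take two Hadamard orders near $n$ with small gcd and invoke the Sylvester--Frobenius bound for the numerical semigroup they generate --- certifies only $t \gtrsim n^2$, so short-interval input of the strength of \cite{dLG:CHC} and \cite{GS:RSA} is unavoidable if one wants the threshold $2n + o(t)$. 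The remaining tasks in a full write-up are therefore only to record the two construction principles carefully and to state precisely which consequence of \cite{dLG:CHC} and \cite{GS:RSA} is being invoked.
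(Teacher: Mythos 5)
Your reduction and your two construction lemmas (truncation of a full Hadamard matrix to its first $n$ rows, and juxtaposition $[A \mid B]$ of partial Hadamard matrices) are both correct, and the overall framework---convert the walk statement into a pure existence statement for partial Hadamard matrices, then import the hard number theory from the cited references---is also what the paper does. But the paper's own proof is a bare citation: it simply records that Graham and Shparlinski observe the de~Launey--Gordon construction goes through unconditionally once the ERH-dependent exponent $7/12$ in their Theorem~1.2 is replaced by $113/132$, and that this yields directly the existence of an $n\times t$ partial Hadamard matrix whenever $n \leq t/2 - t^{113/132+\epsilon}$, which is precisely the hypothesis of Theorem~\ref{Thm:A}. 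So where the paper cites the \emph{conclusion}, you attempt to reconstruct the \emph{mechanism}.

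The reconstruction is plausible but contains an unexamined step. You attribute to \cite{dLG:CHC} and \cite{GS:RSA} the specific statement that $t$ admits a binary representation $t = h_1 + h_2$ with both summands Hadamard orders, $h_1$ near $n$ and $h_2$ near $t - n$; you then invoke this as ``circle-method/sieve-type output.'' That is not obviously the form of the theorem in the references, and you neither verify it nor derive it. What one can plausibly extract from that literature is an exceptional-set bound of the shape ``at most $O(x^{\theta})$ multiples of $4$ up to $x$ fail to be Hadamard orders'' (with $\theta = 113/132+\epsilon$ unconditionally); combined with a pigeonhole over a window of length $\delta \gtrsim t^{\theta}$ around $n$---at most $O((n+\delta)^\theta)$ of the candidate $h_1$ fail, and at most $O(t^\theta)$ of the corresponding $h_2 = t - h_1$ fail, so a working pair exists once $\delta$ beats the sum of the two exceptional counts and $t \geq 2n + \delta$---this does give your binary representation and even explains the $2t^{\theta}$ margin up to a constant absorbed by $\epsilon$. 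You should either confirm that de~Launey and Gordon's Theorem~1.2 already asserts your binary decomposition, or write out this pigeonhole step explicitly; as it stands, the single non-routine assertion in your argument is stated rather than proved, while everything surrounding it (the two construction lemmas and the window arithmetic) is correct.
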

\begin{proof}
  Towards the end of their paper \cite{GS:RSA}, 
  Graham and Shparlinski note that the
  construction described in \cite{dLG:CHC} applies without recourse to
  the Extended Riemann Hypothesis if one replaces the exponent $7/12$
  in Theorem Theorem 1.2 of \cite{dLG:CHC} with the exponent $113/132$.  Thus
  for all sufficiently large $n$, there is an $n\times t$ partial
  Hadamard matrix whenever $n\leq
  \frac{t}{2}-t^{\frac{113}{132}+\epsilon}$.
\end{proof}
Thus, even before we begin our investigation of the walk $(X_t)$, we
know that there is a non-zero probability that our random walk returns
to its start after about $2n$ steps.  However, the proof depends on
deep number-theoretic results concerning the existence of primes in
short arithmetic sequences and special combinatorial constructions
needed to prove the asymptotic existence of Hadamard matrices
\cite{Crai}.  We hope that analytic techniques along the lines
described in this paper will provide more direct proofs for theorems
like Theorem~\ref{Thm:A}.  Indeed, this paper contains a direct proof
for the following result:
\begin{maintheorem}\label{Thm:P00b}
  Let $\epsilon>0$. For all sufficiently large $n$, if
  $t \geq n^{12+\epsilon}$ and $t \equiv 0 \pmod{4}$, then $P_n^t(0,0)
  > 0$. 
\end{maintheorem}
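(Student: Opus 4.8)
The plan is to show $P_n^t(0,0) > 0$ by proving that the Fourier inversion integral for the return probability is positive, using the decomposition $P_n^t(0,0) = |\L_d / \Z^d \text{-normalization}| \int \widehat{\mu}(\theta)^t \, d\theta$ where $\widehat{\mu}(\theta) = 2^{-(n-1)} \sum_{y \in M} e^{\rmi \langle \theta, y\rangle}$ is the characteristic function of a single increment (with the two-to-one collapse taken into account). Write $\widehat\mu(\theta) = \E[\prod_{j<k} e^{\rmi \theta_{jk} y_j y_k}]$ where $y$ is uniform on $\V_n$. The strategy is the standard one for local limit theorems: split the torus (or rather the dual group of $\L_d$) into a small ball $B_\delta$ around $0$, where $\widehat\mu^t$ is close to a Gaussian and contributes a positive main term of order $t^{-d/2}$ times a lattice constant, and the complementary region, where one must show $|\widehat\mu(\theta)|$ is bounded away from $1$ uniformly, so that $|\widehat\mu(\theta)|^t$ is exponentially small and cannot overwhelm the main term. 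One must also account for the period-$4$ structure: there are several points $\theta$ where $|\widehat\mu(\theta)| = 1$, and near each such point $\widehat\mu^t$ contributes a comparable Gaussian term; because $t \equiv 0 \pmod 4$ these contributions add constructively rather than cancelling.

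First I would set up the Fourier representation carefully, identifying the dual group of $\L_d$ and the finite set $\Theta_0$ of $\theta$ with $|\widehat\mu(\theta)| = 1$ (the ``bad'' frequencies coming from the walk's periodicity and from the strict-subgroup support). Second, I would do the local analysis near each $\theta^* \in \Theta_0$: Taylor expand $\log \widehat\mu(\theta^* + \phi)$ to second order, showing the Hessian (a covariance-type matrix built from the $y_jy_k$) is nondegenerate on $\L_d^*$, and integrate the resulting Gaussian to get a term of size $c \, t^{-d/2}$ with $c>0$; the constructive addition over $\Theta_0$ uses $t\equiv 0\pmod 4$. Third — and this is the crux — I would bound $\sup_{\theta \notin \bigcup B_\delta(\theta^*)} |\widehat\mu(\theta)|$ strictly below $1$, with the gap $1 - \sup|\widehat\mu|$ controlled quantitatively in terms of $n$. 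The difficulty is that $\widehat\mu$ is an average of $2^{n-1}$ unit complex numbers, and showing it stays away from the unit circle away from the finitely many bad frequencies requires understanding the multilinear structure of the exponents $\{y_j y_k\}$; a natural approach is a second-moment / hypercontractivity estimate or a clever conditioning argument (fix some coordinates of $y$, average over one remaining coordinate $y_n$, which makes $\widehat\mu$ a product of cosines $\cos(\sum_k \theta_{nk} y_k \pm \cdots)$), showing that unless $\theta$ is near a bad frequency some such cosine factor is bounded away from $\pm 1$.

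The main obstacle will be quantifying this off-diagonal bound well enough to make the exponential decay $|\widehat\mu(\theta)|^t \le (1 - \eta_n)^t$ beat the main term, which is only polynomially small in $t$ but is controlled by a factor like $n^{-d}$ or worse from the lattice constant and the Hessian determinant (which involves $\det$ of a $d \times d$ matrix with entries of size $\sim n$, so potentially as large as $n^{d} = n^{\binom{n}{2}}$). Balancing $(1-\eta_n)^t$ against $t^{-d/2} n^{-O(d)}$ forces $t$ to be at least polynomial in $n$ of degree governed by how $\eta_n$ decays and how large the Hessian determinant and lattice index can be; tracking these dependencies is exactly what produces the exponent $12$ in $t \ge n^{12+\epsilon}$. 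I would therefore expect the bulk of the work to be (a) a careful lower bound $\eta_n \ge c\, n^{-a}$ on the spectral gap away from bad frequencies, obtained by the conditioning-and-cosine argument, and (b) matching bounds $\delta$-ball radius, Hessian eigenvalues, and lattice index, after which the inequality $t \ge n^{12+\epsilon}$ makes the main term dominate and forces $P_n^t(0,0)>0$.
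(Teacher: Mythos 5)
Your high-level plan matches the paper's strategy closely: Fourier inversion, identification of the finite set of frequencies $\Lambda$ with $|\psi|=1$ (the paper shows these are indexed by even-degree graphs on $\{1,\dots,n\}$ and give phases $\pm1,\pm\rmi$, whence the period-$4$ interference you anticipate), a local Gaussian analysis near each such point, and a quantitative spectral gap elsewhere obtained exactly by your ``fix all but one coordinate and average to get a product of cosines'' trick, which yields $|\psi(\lambda)|^2 \leq \tfrac12 + \tfrac12\prod_{i\neq k}\cos(2\lambda_{\{i,k\}})$ and hence $|\psi(\gamma)|\le\cos\delta$ off the bad boxes. So the skeleton is right.

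Where the proposal goes astray is in diagnosing what produces the exponent $12$. You worry that the Hessian determinant could be as large as $n^d$ and that the lattice constant contributes a factor ``like $n^{-d}$ or worse.'' Neither is the case: the covariance of the increment $Z(\xi)$ is the $d\times d$ identity matrix (since $\E[\xi_i\xi_j\xi_k\xi_\ell]=\delta_{\{i,j\},\{k,\ell\}}$), so the quadratic form in the Gaussian approximation is simply $\|\lambda\|^2$ and the Hessian determinant is $1$; and $|\Lambda|=2^{2d-n+1}$ is exponentially \emph{large}, so the lattice constant makes the main term larger, not smaller. The real bottleneck is the cubic imaginary part: Lemma~\ref{Lem:LemmaB} gives $\Im\psi(\lambda)=-\sum_{i<j<k}\lambda_{\{i,j\}}\lambda_{\{j,k\}}\lambda_{\{k,i\}}+O((n\delta)^4)$, which is of order $(n\delta)^3$ and does not vanish. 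To get a usable \emph{lower} bound on $\Re(\psi^{4t})$ one must control $[\Im\psi/\Re\psi]$ through $4t$ powers, which in the paper requires $4t\,(n\delta)^3<1$. Combined with the spectral-gap requirement $t\delta^2\gtrsim n^2\log t$ (so that $e^{-ct\delta^2}$ beats the $t^{-d/2}$ main term), eliminating $\delta$ gives $t\gtrsim n^{12}$. Your proposal never surfaces this cubic constraint, and a reader following it would likely try a purely second-order local CLT argument and fail to see why $\delta$ cannot be taken as large as $n^{-1}$; the exponent comes from $\Im\psi$, not from a large Hessian or small lattice constant.
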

This result is much weaker than Theorem~\ref{Thm:A}.  However, the
proof of Theorem~\ref{Thm:P00b} offers a number of advantages.
Firstly, it generalizes to give results (which we derive elsewhere)
for other kinds of designs such as balanced incomplete block designs
and difference matrices.

Secondly, our analysis provides an accurate asymptotic formula for the
number of distinct designs -- a result which is not available even in
the special case of partial Hadamard matrices.  Specifically, for $t$
large and $n$ fixed, it is possible to prove a local central limit
theorem for $P_n^t(0,0)$, yielding the following asymptotic formula
for the number of partial Hadamard matrices:

\begin{maintheorem} \label{Thm:AsymFormula} Let $N_{n,t}$ be
  the number of partial Hadamard matrices of dimension $n \times t$,
  and let $d = \binom{n}{2}$.  Then
  \begin{equation} \label{Eq:AsymptoticFormula} 
    N_{n,4t} 
      = [1 + o(1)]
      2^{2d-n+nt +1}(8\pi t)^{-d/2}
      \quad \text{as } t \to \infty \,.
  \end{equation}
\end{maintheorem}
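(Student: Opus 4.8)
The plan is to use the correspondence from the introduction in the form $N_{n,4t}=2^{4nt}\,P_n^{4t}(0,0)$ (the case of a matrix with $4t$ columns) and to estimate the return probability $P_n^{4t}(0,0)$ by a local central limit theorem for the walk $(X_t)$, evaluated along the times divisible by its period. Since $n$ is fixed here and only $t\to\infty$, $(X_t)$ is a single fixed lattice walk, so no uniformity in $n$ is needed and this is essentially a textbook application of Fourier inversion; the only substantive computations are the covariance of the increment law and the number of points of the torus at which its characteristic function has modulus $1$.

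First I would record the two ``local'' invariants of the increment law $\mu\deq\mathrm{Uniform}(M)$, i.e.\ of $Z(y)$ for $y$ uniform on $\V_n$. Its mean is $0$ because $\E[y_ay_b]=0$ for $a\neq b$; and a one-line computation of $\E[(y_ay_b)(y_cy_e)]$ --- which is $1$ when $\{a,b\}=\{c,e\}$ and $0$ otherwise --- shows the covariance matrix of $\mu$ is the $d\times d$ identity. Writing $\phi(\theta)\deq\E\bigl[\e^{\i\,\theta\cdot Z(y)}\bigr]$ for the characteristic function, this yields the expansion $\phi(\psi)=1-\tfrac12|\psi|^2+O(|\psi|^3)$ near $\psi=0$, and hence $|\phi(\psi)|\le 1-c|\psi|^2$ for $\psi$ small.

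Next I would determine $\Theta_0\deq\{\theta\in(\R/2\pi\Z)^d:|\phi(\theta)|=1\}$. Equality in the triangle inequality forces $\e^{\i\,\theta\cdot m}$ to be constant as $m$ ranges over $M$, so $\Theta_0$ is the annihilator, inside the torus, of the difference lattice $\L'\deq\langle\,m-m'\st m,m'\in M\,\rangle\le\Z^d$, whence $|\Theta_0|=[\Z^d:\L']$. Since $Z(\vec1)=\vec1$ and, for $y$ with sign pattern given by $S\subseteq\{1,\dots,n\}$, one has $Z(y)-\vec1=-2\,\one_{\delta(S)}$ with $\delta(S)$ the edge cut of $K_n$ determined by $S$, the lattice $\L'$ equals $2C$, where $C\le\Z^d$ is the integer span of the cut vectors $\{\one_{\delta(S)}:S\subseteq\{1,\dots,n\}\}$. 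From $\one_{\delta(\{i\})}+\one_{\delta(\{j\})}-\one_{\delta(\{i,j\})}=2\,\one_{\{i,j\}}$ we get $2\Z^d\subseteq C$, and $C/2\Z^d$ is the $\mathbb F_2$-cut space of $K_n$, of dimension $n-1$; hence $[\Z^d:C]=2^{d-n+1}$ and $[\Z^d:\L']=2^d\cdot2^{d-n+1}=2^{2d-n+1}$, so $|\Theta_0|=2^{2d-n+1}$ (one checks $|\Theta_0|=16$ by hand when $n=3$). Moreover $\L'=2C\supseteq 4\Z^d$, so $\Theta_0\subseteq(\tfrac{\pi}{2}\Z)^d$ modulo $2\pi$; consequently $\phi(\theta_0)$ is a fourth root of unity for every $\theta_0\in\Theta_0$ --- this is the period-$4$ phenomenon noted in the introduction --- and in particular $\phi(\theta_0)^{4t}=1$ for all $t$.

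Finally I would assemble the local limit theorem. Partition the torus into small balls about the finitely many points of $\Theta_0$ together with a complementary region on which $|\phi|\le 1-c<1$ by compactness; the latter contributes $O((1-c)^{4t})$, negligible next to $t^{-d/2}$. On the ball around $\theta_0\in\Theta_0$, since $\e^{\i\,\theta_0\cdot Z(y)}\equiv\phi(\theta_0)$ one has the exact factorization $\phi(\theta_0+\psi)=\phi(\theta_0)\phi(\psi)$, and combined with $\phi(\theta_0)^{4t}=1$ this gives $\phi(\theta_0+\psi)^{4t}=\phi(\psi)^{4t}$; rescaling $\psi=u/\sqrt t$ and applying dominated convergence with the bound from the second paragraph yields
\[
  \int_{\{|\psi|<\delta\}}\phi(\psi)^{4t}\,d\psi=[1+o(1)]\,t^{-d/2}\int_{\R^d}\e^{-2|u|^2}\,du=[1+o(1)]\,(\pi/2)^{d/2}\,t^{-d/2}.
\]
Summing over the $|\Theta_0|=2^{2d-n+1}$ centres and dividing by $(2\pi)^d$ gives $P_n^{4t}(0,0)=[1+o(1)]\,2^{2d-n+1}(8\pi t)^{-d/2}$, and multiplying by the factor $2^{4nt}$ from the matrix--walk correspondence at length $4t$ produces the asserted asymptotic for $N_{n,4t}$ in Theorem~\ref{Thm:AsymFormula}. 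The step requiring the most care is not any single estimate --- for fixed $n$ each one is routine --- but the bookkeeping around the two integer invariants: verifying $\L'=2C$ and $[\Z^d:C]=2^{d-n+1}$ via the cut space of $K_n$, and confirming that all $2^{2d-n+1}$ Fourier peaks combine with trivial phase at times divisible by $4$. If one wanted an explicit rate in place of $o(1)$, an Edgeworth correction to $\phi(\psi)^{4t}$ would be needed, but the stated claim requires only the bare local limit theorem.
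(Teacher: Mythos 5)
Your proof is correct and establishes the same asymptotic, but it takes a genuinely different route through two of the key steps. To count the Fourier peaks, the paper characterizes $\Lambda = \{\lambda\in\B_\pi:|\psi(\lambda)|=1\}$ combinatorially: it shows $\Lambda=\Lambda_1+\Lambda_2^{\even}$, identifies $\Lambda_2^{\even}$ with even-degree graphs on $n$ vertices decomposable into triangles (Lemmas \ref{Lem:LambdaChar}--\ref{Lem:GDecomp}), and counts $|\Lambda_2^{\even}|=2^{\binom{n-1}{2}}$ by the symmetric-matrix argument of Lemma~\ref{Lem:Lambda2EvenSize}. You instead identify $\Lambda$ as the annihilator of the difference lattice $\L'=2C$, reduce to the $\mathbb F_2$-cut space of $K_n$, and obtain $[\Z^d:\L']=2^{2d-n+1}$ by a lattice-index computation. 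These are dual descriptions of the same quantity (the even-degree graphs form the cycle space, of dimension $d-n+1$, orthogonal to the cut space of dimension $n-1$), and both correctly give $|\Lambda|=2^{2d-n+1}$. For the asymptotics, the paper first proves the fully quantitative Theorem~\ref{Thm:Bounds1}, with explicit error functions $U$ and $L$ built from the Taylor estimates of Lemma~\ref{Lem:LemmaB}, and then deduces Theorem~\ref{Thm:AsymFormula} by choosing $\delta=t^{-5/12}$ and letting $t\to\infty$; you instead rescale $\psi=u/\sqrt{t}$ and invoke dominated convergence, which is shorter and entirely adequate for the bare $[1+o(1)]$ statement. The trade-off is that the paper's quantitative intermediate result is what powers Theorem~\ref{Thm:P00b} (existence once $t\geq n^{12+\epsilon}$), so if you later want that theorem too you would have to upgrade your DCT step to an effective bound along the lines of the paper's $U,L$; for Theorem~\ref{Thm:AsymFormula} alone, your argument suffices. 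One small point worth making explicit in your write-up: the pointwise limit $\phi(u/\sqrt t)^{4t}\to e^{-2|u|^2}$ relies on the imaginary part of $\phi$ being $O(|\psi|^3)$ near $0$ (the triangle-sum term in \eqref{ImPsi}), so that the accumulated phase $4t\cdot\Im\phi(u/\sqrt t)=O(|u|^3/\sqrt t)$ vanishes; you implicitly use this but it deserves a sentence.
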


It should be emphasized that to apply the standard local limit theorem
(e.g.\ \cite[P9 on p.\ 79]{spitzer}) to our walk, we must first
transform the walk so that, when sampled at multiples of $4$, it is
strongly aperiodic on $\Z^d$.  However, as indicated above, the
lattice $\L_d$ has a non-trivial structure, leading us to instead
prove Theorem \ref{Thm:AsymFormula} directly from first
principles.  The proof uses the inversion formula (see, for
example, \cite[P3, p.\ 57]{spitzer})
\begin{equation} \label{Eq:Inversion} P_n^t(0,0) = \frac{1}{(2\pi)^{d}}
  \int_{[-\pi,\pi]^d} \psi(\lambda)^t d\lambda \,,
\end{equation}
where, for $\lambda \in \R^d$, the characteristic function
$\psi(\lambda)$ is defined to be the expectation
$\psi(\lambda) \deq 2^{-{n+1}} \sum_{x \in M}e^{\rmi \lambda \cdot x}$.
Following the general approach outlined in \cite{spitzer}, we observe
that the dominant contribution to the integral on the right-hand side
of \eqref{Eq:Inversion} is from the neighborhoods of $\lambda$ with
$|\psi(\lambda)|=1$.  The number and deployment of these neighborhoods
depends on the type of design being investigated.  This paper gives a
complete discussion of these neighborhoods for the walk corresponding
to partial Hadamard matrices.  This direct approach has the benefit of
yielding error estimates in \eqref{Eq:AsymptoticFormula}, and indeed
these are strong enough to prove Theorem~\ref{Thm:P00b}.

Thirdly, we obtain upper bounds for the number of partial Hadamard
matrices.  For example, we will prove the following theorem.
\begin{maintheorem} \label{Thm:BranchingBound}
  There are at most $2^{\binom{n+1}{2}}$ distinct Hadamard matrices of
  order $n$.
\end{maintheorem}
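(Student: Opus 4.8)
The plan is to bound the number of Hadamard matrices of order $n$ by counting the choices available as we build the matrix one row at a time, exploiting the orthogonality constraints to limit each successive row. Fix a Hadamard matrix $H$ of order $n$ with rows $h^{(1)}, \dots, h^{(n)} \in \V_n$. By column-negation equivalence (negating columns so the first row becomes the all-ones vector), we lose nothing by normalizing $h^{(1)} = (1,1,\dots,1)$; this accounts for a factor that we will need to reinsert, but the cleaner bookkeeping is to count normalized matrices and multiply at the end. So first I would reduce to counting row sequences $h^{(1)}, \dots, h^{(n)}$ with $h^{(1)}$ fixed and each pair of rows orthogonal.

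The key step is the following observation: once $h^{(1)}, \dots, h^{(k)}$ are chosen, the next row $h^{(k+1)}$ must be orthogonal to each of them, i.e.\ it lies in the intersection of $k$ ``hyperplane'' conditions in $\V_n$. I claim that the number of such $h^{(k+1)}$ is at most $2^{n-k}$. To see this, think of the first $k$ rows as a $k \times n$ sign matrix of rank $k$ (rank $k$ because distinct rows of a partial Hadamard matrix are linearly independent, as noted in the introduction); the constraint $\langle h^{(j)}, h^{(k+1)}\rangle = 0$ for $j = 1, \dots, k$ is a system of $k$ independent linear equations on the $n$ unknown $\pm 1$ entries of $h^{(k+1)}$. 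Solving for $k$ of the coordinates in terms of the remaining $n-k$ free coordinates shows that at most $2^{n-k}$ sign vectors can satisfy all $k$ equations (each of the $n-k$ free coordinates is a free $\pm1$ choice, and the remaining coordinates are then determined, though not every assignment need yield a genuine $\pm1$ solution --- hence ``at most'').

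Multiplying these bounds over $k = 1, \dots, n-1$ (the first row contributes the single fixed choice after normalization, or at most $2^n$ unnormalized) gives at most $2^{n} \cdot 2^{n-1} \cdot 2^{n-2} \cdots 2^{1} = 2^{\,n + (n-1) + \cdots + 1} = 2^{\binom{n+1}{2}}$ Hadamard matrices of order $n$, as claimed. The main obstacle, and the point requiring the most care, is justifying the rank/independence bookkeeping precisely: one must verify that at each stage the already-chosen rows are genuinely linearly independent over $\R$ (so that the $k$ orthogonality equations are independent and the ``solve for $k$ coordinates'' argument goes through), and one must be careful about whether the normalization of the first row is being counted or not, so that the exponent comes out to exactly $\binom{n+1}{2} = n + (n-1) + \cdots + 1$ rather than off by a factor of $2$. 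A clean way to handle this is to count \emph{all} $n \times n$ partial Hadamard matrices directly (no normalization), getting the bound $\prod_{k=1}^{n} 2^{n-k+1} = 2^{\binom{n+1}{2}}$, since Hadamard matrices of order $n$ are exactly the $n\times n$ partial Hadamard matrices.
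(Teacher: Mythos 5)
Your proof is correct and follows essentially the same approach as the paper: build the matrix one row at a time, and bound the number of admissible next rows by noting that a $d$-dimensional subspace of $\R^n$ can contain at most $2^d$ vectors with $\pm1$ entries. The paper isolates this fact as Lemma~\ref{Lem:Subspace} and states the resulting one-step branching bound in terms of return probabilities for general $n\times t$ partial Hadamard matrices (Theorem~\ref{Thm:BranchStepBound}, Corollary~\ref{Cor:BranchingBound}), but the content is the same as your direct product $\prod_{k=1}^{n}2^{\,n-k+1}=2^{\binom{n+1}{2}}$; the opening detour through column normalization in your write-up is, as you yourself observe, unnecessary and is dropped in the final direct count.
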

Since there are precisely $2^{n^2}$ distinct $n\times n$
$(-1,1)$-matrices, our result shows that the set of Hadamard matrices
occupies at most about one square root of the entire search space.
While our result is doubtless very weak, it shows that even for small
orders being Hadamard is very rare.  

It is worth pausing to emphasize that, when $t = n$, the integral on
the right-hand side of \eqref{Eq:Inversion} \emph{exactly counts the
number of Hadamard matrices}.  Therefore, a positive resolution to
the Hadamard conjecture is possible if it can be shown that this
integral is positive.  While we have not thus succeeded, we have been
able to approximate the integral to obtain new results on the number
of partial Hadamard matrices, and \emph{we have done so without
constructing a single design}.  Thus the integral on the right-hand
side of \eqref{Eq:Inversion} might lead to a non-constructive proof of
the Hadamard Conjecture.  While we have left open the important (and
probably difficult) problem of obtaining sharper estimates for the
integral in equation \eqref{Eq:Inversion} in the region close to
$t=n$, this paper at the very least introduces an interesting
non-symmetric lattice random walk, where an understanding of the early
(rather than the asymptotic) behavior of the transition probabilities
for the walk is paramount.

\bigskip

The rest of this paper is organized as follows:
In Section \ref{Sec:AofI}, we break up the integral
on the right-hand side of \eqref{Eq:Inversion} into
manageable pieces.  In Section \ref{Sec:EstPsi},
we obtain estimates on the characteristic function
$\psi(\lambda)$.   These estimates are used
in Section \ref{Sec:EstProb} to obtain bounds on the
return probabilities $P_n^t(0,0)$, from which
Theorem \ref{Thm:AsymFormula} is derived.
Theorem \ref{Thm:P00b} is contained in
Theorem \ref{Thm:NAbundExist}.
In Section \ref{Sec:Branching}, we prove 
Theorem~\ref{Thm:BranchingBound}, which is part of
Corollary~\ref{Cor:BranchingBound}.

\section{Anatomy of the Integral} \label{Sec:AofI}
In this section, we divide the region of integration for the integral
\begin{equation}\label{Eq:TheIntegral}
  I(d,t) \deq \int_{[-\pi,\pi]^d}\psi(\lambda)^t d\lambda \,,
\end{equation}
into manageable pieces.


We define the closed boxes
\begin{align*}
  \B_\delta(\lambda) & \deq
  \lambda + [-\delta, \delta]^d
  = \left\{ \mu \in \R^d \st \max_{1 \leq i < j \leq n} | \mu_{\{i,j\}}
    - \lambda_{\{i,j\}} | \leq \delta
     \right\}, \\
  \B_\delta & \deq [-\delta,\delta]^d =  \B_\delta(0) \,.
\end{align*}

Let
\begin{equation*}
  \Lambda \deq \{\lambda\in \B_\pi \st |\psi(\lambda)|=1\} \,.
\end{equation*}
Since $0 \in \Lambda$, 
the set $\Lambda$ is non-empty.  
Since, whenever $|\psi(\lambda)| < 1$,  the magnitude of
$\psi(\lambda)^t$ drops rapidly as $t$ grows, it is natural to suppose
that the bulk of the integral \eqref{Eq:TheIntegral} is accounted for by
points in $\B_\pi$ which are near an element of $\Lambda$.
Consequently, we divide the region $\B_\pi$ of integration into 
the small pieces, $\{\B_\delta(\lambda) \st \lambda \in \Lambda\}$,
and the remaining piece
\begin{equation}\label{Eq:ResidualRegionDefinition}
  R_\delta 
  \deq \B_\pi \setminus\bigcup_{\lambda\in\Lambda} \B_\delta(\lambda) \,.
\end{equation}
We then estimate the integral \eqref{Eq:TheIntegral} by combining
our estimates for each of the pieces. 
The parameter $\delta\in(0,\pi/4)$ determines
the sizes of the regions, and will be adjusted as needed.

\begin{proposition} \label{Prop:RetProbPrimSecond}
  For $\delta \in (0, \pi/4)$,
  \begin{equation}\label{Eq:IntConsol}
    I(n,4t) 
    =2^{2d-n+1}
    \int_{\B_\delta}\psi(\lambda)^{4t} d\lambda
    +
    \int_{R_\delta}\psi(\lambda)^{4t} d\lambda\,,
  \end{equation}
  and, if $t \not\equiv 0 \pmod{4}$, then $I(n,t) = \int_{R_\delta}
  \psi(\lambda)^t d\lambda$.
\end{proposition}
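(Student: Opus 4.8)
The plan is to describe the level set $\Lambda=\{\lambda\in\B_\pi:|\psi(\lambda)|=1\}$ explicitly, to record how $\psi$ behaves near each of its points, and then to reduce both identities to a finite character sum. Throughout it is convenient to regard $\B_\pi$ as the torus $\R^d/(2\pi\Z^d)$, on which $\psi$ is well defined and $2\pi$-periodic.

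First I would pin down $\Lambda$. Since $\psi(\lambda)=2^{-n}\sum_{y\in\V_n}\e^{\rmi\lambda\cdot Z(y)}$ is an average of $2^{n}$ unit complex numbers, $|\psi(\lambda)|=1$ holds exactly when they all coincide, i.e.\ when $\lambda\cdot(Z(y)-Z(\one))\in2\pi\Z$ for every $y\in\V_n$. Writing $\L_d^{0}$ for the subgroup of $\Z^d$ generated by the differences $Z(y)-Z(\one)$, this says $\Lambda=2\pi(\L_d^{0})^{*}\cap\B_\pi$, where $*$ denotes the dual lattice. Because $Z(y)-Z(\one)=-2\,\one_{\partial S}$ with $\partial S$ the cut of $K_n$ determined by $S=\{i:y_i=-1\}$, one has $\L_d^{0}=2L$ with $L$ the cut lattice of $K_n$. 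From the identities $\one_{\partial\{i\}}+\one_{\partial\{j\}}-\one_{\partial\{i,j\}}=2e_{\{i,j\}}$ and $\sum_{v=1}^{n}\one_{\partial\{v\}}=2\one$ I get $4\Z^d\subseteq\L_d^{0}$ (hence $\L_d^{0}$ has full rank) and $4\one\in\L_d^{0}$. Full rank makes $\Lambda$, on the torus, the finite group $2\pi(\L_d^{0})^{*}/2\pi\Z^d\cong\Z^d/\L_d^{0}$, of order $2^{d}[\Z^d:L]=2^{d}\cdot2^{\,d-n+1}=2^{\,2d-n+1}$, since the $\mathbb{F}_{2}$-cut space of $K_n$ has dimension $n-1$. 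Finally $L\supseteq2\Z^d$ gives $(\L_d^{0})^{*}=\tfrac12L^{*}\subseteq\tfrac14\Z^d$, so every coordinate of every point of $\Lambda$ lies in $\{0,\pm\pi/2,\pm\pi\}$ and, for $\delta<\pi/4$, the boxes $\B_\delta(\lambda_0)$, $\lambda_0\in\Lambda$, are pairwise disjoint on the torus.

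Next I would use a phase factorization. For $\lambda_0\in\Lambda$ all the summands $\e^{\rmi\lambda_0\cdot Z(y)}$ equal the common value $\omega(\lambda_0)\deq\e^{\rmi\lambda_0\cdot\one}$, so $\psi(\lambda_0+\mu)=\omega(\lambda_0)\psi(\mu)$ for all $\mu$; and since $4\one\in\L_d^{0}$ and $\lambda_0\in2\pi(\L_d^{0})^{*}$, the phase satisfies $\omega(\lambda_0)^{4}=1$, i.e.\ $\omega(\lambda_0)$ is a fourth root of unity. Hence $\int_{\B_\delta(\lambda_0)}\psi(\lambda)^{s}\,d\lambda=\omega(\lambda_0)^{s}\int_{\B_\delta}\psi(\mu)^{s}\,d\mu$ for any exponent $s$. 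Since $\B_\pi$ is the disjoint union of $R_\delta$ and the boxes $\B_\delta(\lambda_0)$, this gives $I(n,s)=\int_{R_\delta}\psi(\lambda)^{s}\,d\lambda+\bigl(\sum_{\lambda_0\in\Lambda}\omega(\lambda_0)^{s}\bigr)\int_{\B_\delta}\psi(\mu)^{s}\,d\mu$. For $s=4t$ every $\omega(\lambda_0)^{4t}=1$, so the bracket equals $|\Lambda|=2^{\,2d-n+1}$, which is \eqref{Eq:IntConsol}. For $s=t$ with $t\not\equiv0\pmod4$, the map $\lambda_0\mapsto\omega(\lambda_0)$ is a character of $\Lambda$ with values in the fourth roots of unity, and it is onto: $\lambda_0=\tfrac\pi2\one_{C}$ for $C$ the edge set of a triangle in $K_n$ lies in $\Lambda$ (a cycle meets every cut evenly) and has $\omega(\lambda_0)=\e^{3\pi\rmi/2}=-\rmi$. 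So $\omega^{t}$ is a \emph{nontrivial} character of $\Lambda$, whence $\sum_{\lambda_0\in\Lambda}\omega(\lambda_0)^{t}=0$ and $I(n,t)=\int_{R_\delta}\psi(\lambda)^{t}\,d\lambda$.

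I expect the main obstacle to be the first step: identifying $\Lambda$ with $2\pi(\L_d^{0})^{*}$ and extracting from it the three facts the argument needs --- full rank of $\L_d^{0}$, the index $[\Z^d:\L_d^{0}]=2^{\,2d-n+1}$, and the containment $(\L_d^{0})^{*}\subseteq\tfrac14\Z^d$ --- which rests on the precise arithmetic of the cut and cycle lattices of $K_n$. Once that structure is settled, the phase factorization and the vanishing character sum are routine. (The statement is intended for $n\ge3$; for $n=2$ the walk has period $2$ rather than $4$, and the triangle character used above is unavailable.)
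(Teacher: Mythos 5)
Your argument is correct, and it reaches the same three key facts the paper needs — $|\Lambda|=2^{2d-n+1}$, the containment $\Lambda\subset\Lambda_0$ (coordinates in $\{0,\pm\pi/2,\pi\}$), and the factorization $\psi(\lambda_0+\mu)=\omega(\lambda_0)\psi(\mu)$ with $\omega(\lambda_0)$ a fourth root of unity — but it gets there by a genuinely different route. The paper proceeds elementarily: it derives the coordinate constraint by substituting sign-flips $\hat y^{(k)}$ into the defining congruences (Lemma \ref{Lem:LambdaFacts}), decomposes $\Lambda_0=\Lambda_1+\Lambda_2$, characterizes $\Lambda\cap\Lambda_2$ as the even-degree graphs $\Lambda_2^{\even}$ (Lemma \ref{Lem:LambdaChar}), decomposes such graphs into triangles (Lemma \ref{Lem:GDecomp}), and counts them by the symmetric-matrix bijection (Lemma \ref{Lem:Lambda2EvenSize}), obtaining the multi-set composition of $\{\psi(\lambda):\lambda\in\Lambda\}$ in Lemma \ref{Lem:Psilambda1}. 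You instead identify $\Lambda$ with $2\pi(\L_d^{0})^{*}$ where $\L_d^{0}=2L$ and $L$ is the cut lattice of $K_n$, read off the coordinate constraint from $(\L_d^0)^*\subseteq\tfrac14\Z^d$ (which follows from $2\Z^d\subseteq L$), get the cardinality from the index computation $[\Z^d:\L_d^{0}]=2^d\cdot 2^{d-n+1}$ using that the $\mathbb{F}_2$-cut space has dimension $n-1$, and get periodicity of the phase from $4\one\in\L_d^0$; the final step is a clean nontrivial-character-sum vanishing rather than the paper's explicit evaluation of $1^t+i^t+(-1)^t+(-i)^t$. Your lattice-duality formulation is conceptually tighter and arguably exposes the structure more clearly (and explains \emph{why} the period is $4$), while the paper's approach is more elementary and self-contained, and its even-degree graph characterization $\Lambda_2^{\star}=\Lambda_2^{\even}$ is reused later (Lemma \ref{Lem:RDecomp}, Proposition \ref{Prop:ResIntBnd}) to control the secondary region, so it is not wasted effort in the paper's overall scheme. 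One small point worth making explicit in your write-up: the image of $L$ in $\Z^d/2\Z^d$ is exactly the $\mathbb{F}_2$-cut space because $L\supseteq 2\Z^d$, which is the fact that legitimizes the index computation $[\Z^d:L]=2^{d-n+1}$; you state the ingredients but the reduction deserves a sentence.
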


Proposition \ref{Prop:RetProbPrimSecond} will follow from
the following two lemmas.
\begin{lemma} \label{Lem:LambdaFacts}
  Let $\Lambda$ be the set of $\lambda$ with $|\psi(\lambda)|=1$.
  \begin{enumeratei}
    \item \label{Part:Psilambda2}
      If $\lambda \in \Lambda$, then
      $\psi(\lambda + \gamma) = \psi(\lambda)\psi(\gamma)$.
    \item \label{Part:LL0}
      If
      \begin{equation*}
        \Lambda_0 \deq \{ \lambda \in \R^d \st \lambda_{\{i,j\}} \in
        \{0, \pm \pi/2, \pi \} \text{ for all } 1 \leq i < j \leq n
        \}\,,
      \end{equation*}
      then $\Lambda \subset \Lambda_0$.
  \end{enumeratei}
\end{lemma}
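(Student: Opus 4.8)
The plan is to deduce both parts from a single observation: for $\lambda \in \R^d$, $|\psi(\lambda)| = 1$ holds if and only if the map $x \mapsto e^{\rmi\lambda\cdot x}$ is constant on $M$, in which case its value there equals $\psi(\lambda)$. Writing $\psi(\lambda) = \E\, e^{\rmi\lambda\cdot X}$ with $X$ uniform on $M$ (so $\psi(0)=1$) and setting $W = e^{\rmi\lambda\cdot X}$, so $|W|=1$, the observation is the elementary strict-convexity fact that a complex random variable $W$ with $|W|\le1$ a.s.\ and $|\E W| = 1$ must equal a deterministic constant of modulus $1$: after multiplying by a unit scalar we may assume $\E W = 1$, whence $\E\,\Re W = 1$ while $\Re W \le |W| \le 1$, forcing $\Re W = 1$ and hence $W = 1$ a.s.; since $M$ is finite, ``a.s.'' means ``for every $x \in M$.''

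Granting this, part~\ref{Part:Psilambda2} is immediate: if $\lambda \in \Lambda$ then $e^{\rmi\lambda\cdot x} = \psi(\lambda)$ for all $x \in M$, so $e^{\rmi(\lambda+\gamma)\cdot x} = \psi(\lambda)\,e^{\rmi\gamma\cdot x}$ for every $\gamma\in\R^d$, and averaging over $x \in M$ gives $\psi(\lambda+\gamma) = \psi(\lambda)\psi(\gamma)$.

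For part~\ref{Part:LL0}, the observation shows that $\lambda\in\Lambda$ forces $\lambda\cdot(x-x')\in2\pi\Z$ for all $x,x'\in M$. It therefore suffices to prove that, for each pair $\{i,j\}$, the vector $4e_{\{i,j\}}$ (four times the standard unit vector of $\R^d$ in coordinate $\{i,j\}$) is a $\Z$-linear combination of differences $x-x'$ with $x,x'\in M$; this yields $4\lambda_{\{i,j\}}\in2\pi\Z$, i.e.\ $\lambda_{\{i,j\}}\in\frac{\pi}{2}\Z$, and since $\lambda\in\B_\pi$ each coordinate then lies in $\{0,\pm\pi/2,\pi\}$ (identifying $-\pi$ with $\pi$, which is harmless as $\psi$ is $2\pi\Z^d$-periodic), that is, $\Lambda\subset\Lambda_0$. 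By symmetry under permuting the coordinates of the argument of $Z$ it is enough to treat $\{i,j\}=\{1,2\}$: starting from $y=(1,1,\dots,1)$ and letting $y'$, $y''$, $y'''$ be obtained from $y$ by negating the first coordinate, the second coordinate, and both, respectively, a direct evaluation of \eqref{Eq:ZDefn} (negating one coordinate of a point of $\V_n$ changes exactly the components indexed by pairs containing that coordinate, each by $\pm2$) gives $Z(y)-Z(y')=2\sum_{k=2}^{n}e_{\{1,k\}}$ and $Z(y'')-Z(y''')=-2e_{\{1,2\}}+2\sum_{k=3}^{n}e_{\{1,k\}}$, so that $[Z(y)-Z(y')]-[Z(y'')-Z(y''')]=4e_{\{1,2\}}$. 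This explicit lattice computation — choosing the right members of $M$ — is the only step that requires any care; the rest is soft.
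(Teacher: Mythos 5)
Your proof is correct and follows essentially the same route as the paper's: both rest on the observation that $|\psi(\lambda)|=1$ forces $e^{\rmi\lambda\cdot Z(y)}$ to be constant in $y$ (the paper states this as equation \eqref{Eq:LambdaConst} without proof; you supply the short strict-convexity argument), and both then extract $4\lambda_{\{i,j\}}\in2\pi\Z$ from the same lattice computation — your explicit choice of $y,y',y'',y'''$ and the resulting $\Z$-linear combination of differences $Z(y)-Z(y')$ is precisely the paper's manipulation of the system \eqref{Eq:SpLinEqns} via $\hat y^{(k)}$ and $\hat y^{(j)}$, just phrased additively in the difference lattice rather than as congruences.
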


\begin{remark} \label{Rmk:LambdaClosed}
  Notice that if $|\psi(\lambda)| = |\psi(\gamma)| = 1$, then
  Lemma~\ref{Lem:LambdaFacts}\eqref{Part:Psilambda2} implies that $|\psi(\lambda + \gamma)| =
  1$.  In other words, $\Lambda$ is closed under addition modulo
  $2\pi$.
\end{remark}

\begin{lemma} \label{Lem:Psilambda1}
  The multi-set
  $\{ \psi(\lambda) \st \lambda \in \Lambda\}$
  contains exactly the elements $\pm 1, \pm i$, each
  appearing $2^{2d - n - 1}$ times.
\end{lemma}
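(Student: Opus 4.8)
The plan is to determine the set $\Lambda$ and the restriction $\psi|_{\Lambda}$ completely; once the structure is known, the multiset of values drops out of a coset count. First I would record that, because $Z$ is two-to-one onto $M$, the value $\psi(\lambda)$ is the mean of the $2^n$ unit complex numbers $e^{\rmi\lambda\cdot Z(y)}$, $y\in\V_n$; hence $\lambda\in\Lambda$ precisely when all these summands coincide. By Lemma~\ref{Lem:LambdaFacts}\eqref{Part:LL0} it is enough to treat $\lambda\in\Lambda_0$, so write $\lambda_{\{i,j\}}=\tfrac{\pi}{2}a_{ij}$ with $a_{ij}\in\Z/4\Z$. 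Then $\lambda\cdot Z(y)=\tfrac{\pi}{2}F_a(y)$ with $F_a(y)=\sum_{i<j}a_{ij}y_iy_j$, and $|\psi(\lambda)|=1$ becomes the requirement $F_a(y)\equiv F_a(\mathbf 1)\pmod 4$ for every $y\in\V_n$, where $\mathbf 1=(1,\dots,1)$.

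The crux is to unpack this requirement. Since $y_iy_j-1$ equals $0$ when $y_i=y_j$ and $-2$ otherwise, $F_a(y)-F_a(\mathbf 1)=-2\sum_{\{i,j\}\in\partial S}a_{ij}$, where $S=\{i\st y_i=1\}$ and $\partial S$ is the set of pairs $\{i,j\}$ with exactly one endpoint in $S$. Hence $\lambda\in\Lambda$ iff $\sum_{\{i,j\}\in\partial S}a_{ij}$ is even for every $S\subseteq\{1,\dots,n\}$. This depends only on $b:=(a_{ij}\bmod 2)_{i<j}$, and it says exactly that the graph on $\{1,\dots,n\}$ with edge set $\{\,\{i,j\}\st b_{ij}=1\,\}$ has every vertex of even degree, i.e.\ that $b$ lies in the cycle space $\mathcal C\subseteq(\Z/2\Z)^d$ of the complete graph $K_n$, which has dimension $\binom n2-(n-1)=\binom{n-1}2$. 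Consequently the reduction $a\mapsto b$ maps $\Lambda$ onto $\mathcal C$ with kernel the $2^d$ tuples having every $a_{ij}\in\{0,2\}$ (all of which do lie in $\Lambda$, since $b=0\in\mathcal C$), whence $|\Lambda|=2^{\binom{n-1}2}\cdot 2^{\binom n2}=2^{2d-n+1}$.

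It remains to evaluate $\psi$ on $\Lambda$. For $\lambda\in\Lambda$ the common summand value is $\psi(\lambda)=e^{\rmi\frac{\pi}{2}F_a(\mathbf 1)}=\rmi^{\sum_{i<j}a_{ij}}$, which already shows $\psi(\lambda)\in\{\pm1,\pm\rmi\}$. By Lemma~\ref{Lem:LambdaFacts}\eqref{Part:Psilambda2} together with Remark~\ref{Rmk:LambdaClosed}, $\psi$ restricts to a group homomorphism $(\Lambda,+)\to\{\pm1,\pm\rmi\}$ (explicitly $a\mapsto\rmi^{\sum a_{ij}}$), so to finish it suffices to check this homomorphism is surjective: then each of the four values is attained on one coset of the kernel, hence on exactly $|\Lambda|/4=2^{2d-n-1}$ elements of $\Lambda$, as claimed. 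Surjectivity I would verify by exhibiting the element supported on a triangle (here $n\geq3$ is used): for $\lambda_{\{1,2\}}=\lambda_{\{1,3\}}=\lambda_{\{2,3\}}=\pi/2$ with all other coordinates $0$, one has $y_1y_2+y_1y_3+y_2y_3\equiv 3\pmod 4$ for every $y\in\V_n$, so $\lambda\in\Lambda$ and $\psi(\lambda)=\rmi^{3}=-\rmi$, a generator of $\{\pm1,\pm\rmi\}$.

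The only genuine obstacle is the second paragraph: seeing that $|\psi(\lambda)|=1$ is equivalent to $(a_{ij}\bmod 2)$ being an even subgraph of $K_n$, and keeping the mod-$4$ versus mod-$2$ bookkeeping straight. The observation that makes it work is that the ``carry'' $y_iy_j-1$ is always $0$ or $-2$, so the high-order binary digit of each $a_{ij}$ is entirely unconstrained; granting this, the dimension count for $\mathcal C$ and the homomorphism-plus-surjectivity argument are routine.
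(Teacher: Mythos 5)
Your proof is correct and takes a genuinely different route from the paper's. The paper builds the count of $\Lambda$ through a chain of lemmas: it decomposes $\Lambda_0=\Lambda_1+\Lambda_2$ uniquely, shows $\Lambda_1\subset\Lambda$, characterizes $\Lambda\cap\Lambda_2$ as the even-degree graphs $\Lambda_2^\even$ (Lemma~\ref{Lem:LambdaChar}), and to get the nontrivial inclusion $\Lambda_2^\even\subset\Lambda$ it invokes the triangle-decomposition Lemma~\ref{Lem:GDecomp}; the size $|\Lambda_2^\even|=2^{\binom{n-1}{2}}$ is then computed by a symmetric-matrix bijection (Lemma~\ref{Lem:Lambda2EvenSize}). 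You instead work directly in $(\Z/4)^d$ coordinates, observe that $F_a(y)-F_a(\mathbf 1)=-2\sum_{\partial S}a_{ij}$ so only $a\bmod 2$ matters, and identify the constraint as membership of the mod-$2$ graph in the cycle space of $K_n$ via cut--cycle orthogonality. This short-circuits the triangle-decomposition lemma entirely and packages the dimension count into the standard fact $\dim\mathcal C=\binom{n-1}{2}$, with the kernel $\{0,2\}^d$ supplying the remaining factor $2^d$. For the distribution of values, both arguments exploit $\psi(\lambda+\gamma)=\psi(\lambda)\psi(\gamma)$ and a triangle element with $\psi=-\rmi$; the paper phrases it as a four-fold shift symmetry of the multiset, you phrase it as a surjective homomorphism onto $\Z/4$ with equal-sized cosets --- these are the same idea. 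What your approach buys: it is more self-contained, it makes the free high-order bit transparent, and it explains \emph{why} the even-degree condition appears (it is literal orthogonality to cuts) rather than deriving it through the triangle-gluing induction. What the paper's route buys: the sets $\Lambda_1$, $\Lambda_2^\even$, $\Lambda_2^\odd$ and the triangle element are reused later (in Lemma~\ref{Lem:RDecomp} and the secondary-region estimate), so the paper's extra scaffolding is not wasted in context.
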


We will prove Lemma \ref{Lem:LambdaFacts} and Lemma
\ref{Lem:Psilambda1} after deriving
Proposition \ref{Prop:RetProbPrimSecond} from them.

\begin{proof}[Proof of Proposition \ref{Prop:RetProbPrimSecond}]
  For $\delta < \pi/4$, by 
  Lemma~\ref{Lem:LambdaFacts}\eqref{Part:LL0}, the boxes  
  $\{\B_\delta(\lambda)\}_{\lambda \in \Lambda}$ 
  are disjoint.  Thus,
  \begin{equation} \label{Eq:IntAdd}
    I(n,t) = \sum_{\lambda \in \Lambda} \int_{\B_\delta(\lambda)}
    \psi(\gamma)^t d\gamma + \int_{R_\delta} \psi(\gamma)^t d\gamma
    \,.
  \end{equation}
  By Lemma~\ref{Lem:LambdaFacts}\eqref{Part:Psilambda2},
  we have
  $\int_{\B_\delta(\lambda)} \psi(\gamma) d\gamma
    = \psi(\lambda) \int_{\B_\delta} \psi(\gamma)d\gamma$,
  which together with \eqref{Eq:IntAdd} shows that
  \begin{equation} \label{Eq:IntAdd2}
    I(n,t) = \sum_{\lambda \in \Lambda} \psi(\lambda) 
      \int_{\B_\delta} \psi(\gamma) d \gamma
      + \int_{R_\delta} \psi(\gamma)^t d\gamma \,.
  \end{equation}
  This identity
  together with Lemma \ref{Lem:Psilambda1}
  yield
  \begin{equation} \label{Eq:IntAdd3}
    \begin{split}
    I(n,t) & = 2^{2d-n-1}\left[1^t + i^t + (-1)^t + (-i)^t \right]
      \int_{\B_\delta} \psi(\gamma)^t  d\gamma\\
      & \quad + \int_{R_\delta} \psi(\gamma)^t d \gamma \,.
    \end{split}
  \end{equation}
  The sum $1^t + i^t + (-1)^t + (-i)^t$ vanishes unless $t \equiv 0 
  \pmod 4$, in which case it equals $4$.  This observation together
  with \eqref{Eq:IntAdd3} finishes the proof.
\end{proof}  

\begin{proof}[Proof of Lemma \ref{Lem:LambdaFacts}]
  We prove Part \eqref{Part:Psilambda2}.
  First, observe that
  \begin{equation} \label{Eq:LambdaConst}
    \lambda \in \Lambda \qquad \Longleftrightarrow \qquad
    e^{\i \lambda\cdot Z(y)} = e^{\i \lambda\cdot Z(w)}
    \quad (\forall \, y,w \in \V_n) \,.
  \end{equation}
  That is, if $\lambda \in \Lambda$, then 
  $\psi(\lambda) = e^{i \lambda \cdot Z(y)}$ for all $y \in \V_n$.
  Consequently,
  \begin{equation*}
    \psi(\lambda + \gamma) =
    2^{-n} \sum_{y \in \V_n} e^{\i Z(y)\cdot (\lambda + \gamma)}
    = 2^{-n} \sum_{y \in \V_n} \psi(\lambda) e^{\i Z(y) \cdot \gamma}
    = \psi(\lambda)\psi(\gamma) \,.
  \end{equation*}

  Next, we prove Part \eqref{Part:LL0}.
  The equations in the right-hand statement of the equivalence
  \eqref{Eq:LambdaConst} are equivalent to the following system of
  modulo $2\pi$ linear equations:
  \begin{equation} \label{Eq:FullLinEqs}
    \sum_{i<j} \lambda_{\{i,j\}}y_iy_j
      \equiv \sum_{i<j}\lambda_{\{i,j\}}w_iw_j \pmod{2\pi}
      \qquad (\forall \, y,w \in \V_n) \,.
  \end{equation}
  Fix $\lambda \in \Lambda$.
  For $y \in \V_n$ and $k \in \{1,2,\ldots,n\}$, define
  \begin{equation*}
    \hat{y}^{(k)}_j
    = 
    \left\{
    \begin{array}{rl}
      y_j & \text{if } j \neq k, \\
      -y_k & \text{if } j = k \,.
    \end{array}
    \right.
  \end{equation*}
  Taking $w = \hat{y}^{(k)}$ in
  \eqref{Eq:FullLinEqs} shows that
  \begin{equation}\label{Eq:SpLinEqns}
    2\sum_{i \st i\not=k}\lambda_{\{i,k\}}y_i \equiv 0 \pmod{2\pi} 
    \qquad (\forall k \in \{1,2,\ldots, k\}) \,.
  \end{equation}
  Since this holds for any choice of $y \in \V_n$, 
  it holds also for $\hat{y}^{(j)}$, whence
  we have the following two instances of \eqref{Eq:SpLinEqns}:
  \begin{align*}
    2\lambda_{\{j,k\}}y_j +
    2\sum_{i \st i\notin\{j,k\}}\lambda_{\{i,k\}}y_i & \equiv 0
    \pmod{2\pi} \,,  \\
    -2\lambda_{\{j,k\}}y_j 
    + 2\sum_{i \st i\notin\{j,k\}}\lambda_{\{i,k\}}y_i  & \equiv 0
    \pmod{2\pi}
    \,.
  \end{align*}
  Therefore, for all $1 \leq j < k \leq n$, it follows that
  $4\lambda_{\{j,k\}}\equiv0\pmod{2\pi}$, that is,
  $\lambda_{\{j,k\}}\in\{0,\pm\pi/2,\pi\}$. 
\end{proof}

From Lemma \ref{Lem:LambdaFacts}\eqref{Part:LL0}, we know that
$\Lambda \subset \Lambda_0$.  In fact, this inclusion is strict.
To prove Lemma \ref{Lem:Psilambda1}, we need to characterize
further the set $\Lambda$.   In view of this, we introduce
the following two sets:
Let
\begin{align*}
  \Lambda_1 & = \{ \lambda \in \R^d \st \lambda_{\{i,j\}} 
    \in \{0, \pi\} \text{ for all }
    1 \leq i < j \leq n \} \,,\\
  \Lambda_2 & = \{ \lambda \in \R^d \st \lambda_{\{i,j\}}
    \in \{0, \pi/2\} \text{ for all } 1 \leq i < j \leq n \} \,.
\end{align*}
These sets have several important properties.
The set $\Lambda_1$ is closed under addition 
modulo $2\pi$, and $\Lambda_2$ is closed under
addition modulo $\pi$. Furthermore, 
$\Lambda_0 =\Lambda_1+\Lambda_2$, meaning that
every $\lambda^{(0)} \in \Lambda_0$ can be written in the
form
\begin{equation}\label{Eq:Lambda0SetDecomp}
  \lambda^{(0)} \equiv \lambda^{(1)} + \lambda^{(2)}
  \pmod{2\pi},
  \quad \text{where } \lambda^{(1)} \in \Lambda_1, \;
  \lambda^{(2)} \in \Lambda_2 \,.
\end{equation}
Moreover, because $\Lambda_0$ contains $4^d$ elements, and
the sets $\Lambda_1$ and $\Lambda_2$ each contain $2^d$
elements, this representation is unique.

Recall, as noted in Remark \ref{Rmk:LambdaClosed}, $\Lambda$
is closed under addition modulo $2\pi$.  Notice that,
since $e^{\pm \pi i} = -1$, the set $\Lambda$ contains
$\Lambda_1$.  Therefore, the element $\lambda^{(0)} = \lambda^{(1)} +
\lambda^{(2)}$ of $\Lambda_0$ is in $\Lambda$ if and only if
$\lambda^{(2)} \in \Lambda$.  Consequently, if we define
$\Lambda_2^\star = \Lambda \cap \Lambda_2$, then
$\Lambda = \Lambda_1 + \Lambda_2^\star$.   
We will now identify the set $\Lambda_2^\star$.
Note that \eqref{Eq:Lambda0SetDecomp} implies that
any $\lambda \in \Lambda$ can be written uniquely as
\begin{equation} \label{Eq:LambdaDecomp}
  \lambda \equiv \lambda^{(1)} + \lambda^\star 
  \pmod{2\pi}, \quad
  \text{where } \lambda^{(1)} \in \Lambda_1, \;
  \lambda^\star \in \Lambda_2^\star \,.
\end{equation}

Each $\Lambda^{(2)} \in \Lambda_2$ has a combinatorial
characterization.
For each element $\lambda^{(2)} \in \Lambda_2$ we define
a weighted graph $G_\lambda$ on the vertices $\{1,2,\ldots,n\}$
by including an edge $\{i,j\}$ if and only if $\lambda_{\{i,j\}} > 0$.
We say a graph is \emph{even-degree} if all of its vertices
have even degree.  We define
\begin{equation*}
  \Lambda_2^\even = 
  \{\lambda \in \Lambda_2 \st G_\lambda \text{ is even-degree} \}\,.
\end{equation*}
We can now provide a useful characterization of the set
$\Lambda^\star$.

\begin{lemma} \label{Lem:LambdaChar}
  We have $\Lambda_2^\star = \Lambda_2^\even$.
\end{lemma}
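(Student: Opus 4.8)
The plan is to compute $\psi(\lambda)$ explicitly for $\lambda \in \Lambda_2$ in terms of the graph $G_\lambda$, and then simply read off when $|\psi(\lambda)|=1$. Fix $\lambda \in \Lambda_2$, let $E$ be the edge set of $G:=G_\lambda$ (so $\lambda_{\{i,j\}}=\pi/2$ when $\{i,j\}\in E$ and $\lambda_{\{i,j\}}=0$ otherwise), and parametrize $y\in\V_n$ by $x\in\{0,1\}^n$ through $y_j=(-1)^{x_j}$. Since $\lambda\cdot Z(y)=\tfrac{\pi}{2}\sum_{\{i,j\}\in E}y_iy_j$ and $y_iy_j=(-1)^{x_i+x_j}$, I would introduce the cut count $c(x):=\#\{\{i,j\}\in E : x_i\neq x_j\}$, note the bookkeeping identity $\sum_{\{i,j\}\in E}y_iy_j=|E|-2c(x)$, and hence obtain
\[
  \psi(\lambda)=2^{-n}\sum_{y\in\V_n}\rmi^{\,\sum_{\{i,j\}\in E}y_iy_j}
  =2^{-n}\,\rmi^{\,|E|}\sum_{x\in\{0,1\}^n}(-1)^{c(x)},
\]
using the form $\psi(\lambda)=2^{-n}\sum_{y\in\V_n}e^{\rmi\lambda\cdot Z(y)}$ of the characteristic function that appears in the proof of Lemma~\ref{Lem:LambdaFacts}.

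The next step is to evaluate $\sum_x(-1)^{c(x)}$ by factoring it over vertices. Since $(-1)^{[x_i\neq x_j]}=(-1)^{x_i+x_j}$ for $x_i,x_j\in\{0,1\}$, we get $(-1)^{c(x)}=\prod_{\{i,j\}\in E}(-1)^{x_i+x_j}=(-1)^{\sum_k x_k\deg_G(k)}$, and therefore
\[
  \sum_{x\in\{0,1\}^n}(-1)^{c(x)}=\prod_{k=1}^n\bigl(1+(-1)^{\deg_G(k)}\bigr),
\]
which equals $2^n$ if $G$ is even-degree and $0$ otherwise. Consequently $\psi(\lambda)=\rmi^{\,|E|}$ when $G_\lambda$ is even-degree and $\psi(\lambda)=0$ otherwise, so $|\psi(\lambda)|=1$ exactly when $G_\lambda$ is even-degree. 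Since $\Lambda_2^\star=\Lambda\cap\Lambda_2=\{\lambda\in\Lambda_2 : |\psi(\lambda)|=1\}$, this gives $\Lambda_2^\star=\Lambda_2^\even$.

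If one prefers to avoid the character-sum manipulation, the same conclusion follows by a direct parity argument: $c(\zero)=0$, and flipping a single coordinate $x_k$ reverses the cut-status of precisely the $\deg_G(k)$ edges incident to $k$, so it changes $c(x)$ by an integer congruent to $\deg_G(k)\pmod 2$. Hence $(-1)^{c(x)}$ is identically $1$ when all degrees are even, while if some $\deg_G(k)$ is odd the coordinate-$k$ flip is a sign-reversing involution on $\{0,1\}^n$ and the sum vanishes.

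I do not anticipate a serious obstacle here; the subtlest point is getting the combinatorial translation right, namely the identity $\sum_{\{i,j\}\in E}y_iy_j=|E|-2c(x)$, the reduction $\rmi^{\,|E|-2c(x)}=\rmi^{\,|E|}(-1)^{c(x)}$, and the vertex-degree factorization — but all of this is routine. One should also keep the normalization of $\psi$ straight, and check that the degenerate case $E=\emptyset$ (that is, $\lambda=\zero$) is consistent with the formula, since then $\psi(\zero)=1$ and the empty graph is trivially even-degree.
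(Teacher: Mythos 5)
Your proof is correct, and it takes a genuinely different and cleaner route than the paper. The paper proves the two inclusions separately: for $\Lambda_2^\star\subset\Lambda_2^\even$ it invokes the linear-congruence system \eqref{Eq:SpLinEqns} (derived in Lemma~\ref{Lem:LambdaFacts}) and reads off the even-degree condition; for the reverse inclusion it first develops Lemma~\ref{Lem:GDecomp} to decompose any even-degree graph as a symmetric difference of triangles, then evaluates $\psi$ on each triangle piece and multiplies. You instead compute $\psi(\lambda)$ in closed form for \emph{every} $\lambda\in\Lambda_2$: writing $y_j=(-1)^{x_j}$, factoring $(-1)^{c(x)}=\prod_k(-1)^{x_k\deg_G(k)}$, and collapsing the character sum to $\prod_k\bigl(1+(-1)^{\deg_G(k)}\bigr)$ gives $\psi(\lambda)=\rmi^{\,|E|}$ when $G_\lambda$ is even-degree and $\psi(\lambda)=0$ otherwise, from which the lemma is immediate. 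This buys you both directions at once, bypasses the triangle-decomposition lemma entirely, and in fact gives \emph{more} information: the exact value of $\psi$ on $\Lambda_2^\even$. (Incidentally, your formula $\psi(\lambda)=\rmi^{\,|E|}$ is the right one; the paper's intermediate claim $\lambda\cdot Z(y)\equiv-\tfrac{\pi}{2}|\mathcal T_\lambda|\pmod{2\pi}$ is slightly off when the triangle covering has overlaps, e.g. $G_\lambda=T_{\{1,2,3\}}\oplus T_{\{1,2,4\}}$ has $|\mathcal T_\lambda|=2$ but $\psi(\lambda)=1$, not $-1$. That slip is harmless for Lemma~\ref{Lem:LambdaChar} since only $|\psi(\lambda)|=1$ is needed, but your computation is tighter.) Your sanity checks — the $E=\emptyset$ case and the sign-reversing involution reformulation — are both apt.
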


To prove Lemma \ref{Lem:LambdaChar}, we will need
to know that even-degree graphs are built-up from
triangle graphs.  We denote by $T_{\{a,b,c\}}$
the graph on the vertices $\{1,2,\ldots,n\}$ with
the edges $\{a,b\},\{b,c\},\{c,b\}$.
Let $G$ and $H$ be graphs on the vertex set $\{1,2,\ldots,n\}$.
Then $G \oplus H$ is the graph with vertices $\{1,2,\ldots,n\}$
which contains the edge $\{i,j\}$ if and only if $\{i,j\}$
is an edge of exactly one of $G$ and $H$.

\begin{lemma} \label{Lem:GDecomp}
  The set $\Lambda_2^\even$ consists of the elements
  $\lambda \in \Lambda_2$ such that 
  $G_\lambda=\oplus_{\{a,b,c\}\in\mathcal{T}_{\lambda}}T_{\{a,b,c\}}$
  for some set of triples ${\mathcal T}_\lambda$. 
\end{lemma}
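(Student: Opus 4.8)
The plan is to establish the two inclusions separately, and the content is almost entirely a well-known fact from algebraic graph theory reformulated in the present notation. Observe first that a graph $G$ on $\{1,\dots,n\}$, identified with its edge-indicator vector in $\mathbb{F}_2^d$, is even-degree precisely when it lies in the \emph{cycle space} of the complete graph $K_n$, i.e.\ the kernel of the vertex--edge incidence map over $\mathbb{F}_2$; equivalently, $G$ is an edge-disjoint-in-$\mathbb{F}_2$ sum of cycles. Under the correspondence $\lambda^{(2)} \mapsto G_\lambda$, the operation ``addition modulo $\pi$ in $\Lambda_2$'' matches exactly the symmetric difference $\oplus$ on graphs (since the nonzero coordinate value $\pi/2$ satisfies $\pi/2 + \pi/2 \equiv 0 \pmod{\pi}$), so $\Lambda_2 \cong \mathbb{F}_2^d$ as a group and $\Lambda_2^{\even}$ is a subgroup. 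Thus the lemma is the statement that the triangles $T_{\{a,b,c\}}$ span the cycle space of $K_n$ over $\mathbb{F}_2$.

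First I would prove that every triangle is even-degree (immediate: each of its three vertices has degree $2$, all others degree $0$), and that $\Lambda_2^{\even}$ is closed under $\oplus$ (degrees add mod $2$), so that any $\oplus$-sum of triangles lies in $\Lambda_2^{\even}$; this gives the ``$\supseteq$'' direction. For the reverse inclusion, I would take $\lambda \in \Lambda_2^{\even}$ and argue by induction on the number of edges of $G_\lambda$ that $G_\lambda$ is an $\oplus$-sum of triangles. If $G_\lambda$ has no edges the empty sum works. Otherwise, since $G_\lambda$ is even-degree with an edge, it contains a cycle $C$; pick a shortest cycle in $G_\lambda$, say on vertices $v_0 v_1 \cdots v_{k-1} v_0$. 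If $k = 3$ this is a triangle $T$; replacing $G_\lambda$ by $G_\lambda \oplus T$ removes those three edges (minimality of $k$ forbids chords, so no cancellation creates a shorter configuration issue), strictly decreasing the edge count while keeping the graph even-degree, and induction finishes. If $k > 3$, I would instead use the ``chord-splitting'' identity: the $k$-cycle on $v_0,\dots,v_{k-1}$ equals $T_{\{v_0,v_1,v_2\}} \oplus C'$ where $C'$ is the $(k-1)$-cycle $v_0 v_2 v_3 \cdots v_{k-1} v_0$, because the edge $\{v_0,v_2\}$ appears in both $T$ and $C'$ and cancels. So $G_\lambda \oplus T_{\{v_0,v_1,v_2\}}$ is even-degree and, while it need not have fewer edges than $G_\lambda$, it has a shorter shortest cycle; hence an inner induction on the length of the shortest cycle (or a combined induction on the pair (shortest cycle length, number of edges) in lexicographic order) drives $k$ down to $3$, and then the previous case applies.

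The main obstacle is purely bookkeeping: making the induction in the $k>3$ case clean, since one step of $\oplus$ with a triangle can increase the edge count, so the naive induction on $|E(G_\lambda)|$ alone does not close. I would handle this by the lexicographic measure $(\,\text{girth of } G_\lambda,\ |E(G_\lambda)|\,)$ described above, noting that $\oplus$-ing with $T_{\{v_0,v_1,v_2\}}$ when $\{v_0,v_1,v_2\}$ spans a shortest cycle either strictly shortens the girth or (when the girth is already $3$) strictly reduces the edge count; termination is then immediate and the set $\mathcal{T}_\lambda$ is obtained by collecting all the triangles used along the way. Finally I would translate back: $G_\lambda = \bigoplus_{\{a,b,c\}\in\mathcal{T}_\lambda} T_{\{a,b,c\}}$ is exactly the asserted decomposition, completing the proof.
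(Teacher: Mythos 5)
Your decomposition strategy (peel off triangles by XOR) is the same underlying idea as the paper's, but your termination argument contains an error and is also more elaborate than necessary. Two specific points.

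First, your claim that in the girth $k>3$ case the XOR ``need not have fewer edges'' is false. If $v_0 v_1 v_2$ lie consecutively on a shortest cycle of length $k>3$, then the chord $\{v_0,v_2\}$ cannot be present in $G_\lambda$ (it would create a $3$-cycle), so $G_\lambda \oplus T_{\{v_0,v_1,v_2\}}$ deletes the two cycle edges $\{v_0,v_1\},\{v_1,v_2\}$ and adds $\{v_0,v_2\}$, a net change of $-1$. In the $k=3$ case the triangle's three edges are all present and the net change is $-3$. Thus $|E(G_\lambda)|$ strictly decreases at every step, and a plain induction on edge count already closes; the ``fake problem'' you set out to fix does not arise.

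Second, the lexicographic measure $(\text{girth},|E|)$ you propose as the fix is not actually decreasing. In the girth-$3$ step you remove the three triangle edges, so $|E|$ drops, but the girth of the resulting graph can strictly increase (to $4$, to $5$, or the graph can become empty). Then $(3,m)\to(g',m-3)$ with $g'>3$, which is a strict lexicographic increase, so well-foundedness fails as written. Since the first point shows edge count alone suffices, the cure is to drop the girth coordinate entirely.

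For comparison, the paper's argument avoids cycles and girth altogether: a nonempty even-degree graph has a vertex $a$ of degree $\geq 2$, hence two incident edges $\{a,b\},\{a,c\}$; XOR with $T_{\{a,b,c\}}$ deletes those two and toggles $\{b,c\}$, so the edge count drops by $1$ or $3$, and induction on $|E|$ finishes in one line. Your cycle-space framing is a pleasant conceptual reformulation (the $\supseteq$ direction and the closure of $\Lambda_2^{\even}$ under $\oplus$ are transparent in that language), but the shortest-cycle chord-splitting machinery adds no power here and introduced the flawed termination argument.
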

\begin{proof}
  We proceed by induction on the number $E$ of edges.
  The statement is true for $E = 0$.
  There are no non-empty, even-degree graphs with fewer than three edges,
  and the only even-degree graph with three edges is a triangle.
  Thus, the claim is true for $E=3$.
  Suppose now that it holds for all even-degree graphs 
  with strictly fewer than $m > 0$ edges, and suppose that
  $G_\lambda$ has $m$ edges.
  Since $G_\lambda$ has all degrees even, at least two edges, 
  say $\{a,b\}$ and $\{a,c\}$,
  emanate from the same vertex, say $a$.  So the mod 2 sum
  $G_\lambda\oplus T_{\{a,b,c\}}$ has strictly fewer than
  $m$ edges,  and, since $T_{\{a,b,c\}}$ has all even degrees, so does 
  $G_\lambda\oplus T_{\{a,b,c\}}$.  By the induction hypothesis,
  $G_\lambda\oplus T_{\{a,b,c\}}$ can be decomposed in triangles.
  Since $G_\lambda = (G_\lambda\oplus T_{\{a,b,c\}})\oplus
  T_{\{a,b,c\}}$, it follows that $G_\lambda$ also has such
  a decomposition.
\end{proof}

\begin{proof}[Proof of Lemma \ref{Lem:LambdaChar}]
  First, suppose
  $\lambda \in \Lambda_2^\star \deq \Lambda \cap \Lambda_2$.
  Then the equations \eqref{Eq:SpLinEqns} hold for all
  $k \in \{1,2,\ldots, n\}$.   Equation \eqref{Eq:SpLinEqns}
  holds for $k \in \{1,2,\ldots, n\}$ if and only if
  $2 \lambda_{\{i,k\}} = \pi$ for an even number of $i \in
  \{1,2,\ldots,n\} \setminus \{k\}$.  By definition of
  $G_\lambda$, this holds if and only if the degree of
  vertex $k$ in the graph $G_\lambda$ is even.  Thus, $G_\lambda$ is
  an even-degree graph, that is, $\lambda \in \Lambda_2^\even$.
  Therefore, $\Lambda_2^\star \subset \Lambda_2^\even$.

  Next, suppose that $\lambda \in \Lambda_2^\even$.  
  By Lemma~\ref{Lem:GDecomp}, there exists a set of
  triples ${\mathcal T}_\lambda$ such that
  \begin{equation*}
    G_\lambda = \oplus_{\{a,b,c\}\in {\mathcal T}_\lambda}
    T_{\{a,b,c\}} \,.
  \end{equation*}
  Let $\lambda^{(\{a,b,c\})} \in \Lambda_2^\even$ be defined
  as
  \begin{equation*}
    \lambda^{(\{a,b,c\})}_{\{i,j\}}
    \deq
    \begin{cases}
      \tfrac{\pi}{2} &\text{if } \{i,j\}\subset\{a,b,c\}, \\
      0& \text{otherwise} \,.
    \end{cases}
  \end{equation*}
  Then, 
  \begin{equation*}
    \lambda \equiv \sum_{\{a,b,c\} \in {\mathcal T}_\lambda}
      \lambda^{(\{a,b,c\})} \pmod{2\pi} \,.
  \end{equation*}
  Now, for all $y \in \V_n$,
  \begin{equation}\label{Eq:LambdaTri}
    \lambda^{(\{a,b,c\})}\cdot Z(y)
     =\tfrac{\pi}{2}(y_ay_b+y_ay_c+y_by_c)
       \equiv-\tfrac{\pi}{2} \pmod{2\pi}\,.
  \end{equation}
  Thus, for all $y \in \V_n$,
  \begin{equation*}
    \lambda \cdot Z(y) \equiv -\frac{\pi}{2} | {\mathcal
      T}_\lambda | \pmod{2\pi} \,.
  \end{equation*}
  In particular, $e^{\i \lambda \cdot Z(y)} = e^{- \i \frac{\pi}{2}
    | {\mathcal T}_\lambda| }$, and is independent of $y$.  
  Therefore, $\lambda \in \Lambda \cap \Lambda_2 =: \Lambda_2^\star$.  
  We conclude that
  $\Lambda_2^\even \subset \Lambda_2^\star$.
\end{proof}

\begin{lemma} \label{Lem:Lambda2EvenSize}
  The cardinality of $\Lambda_2^\even$ is $2^{\binom{n-1}{2}}$.
\end{lemma}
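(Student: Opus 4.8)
The plan is to translate the statement into linear algebra over $\mathbb{F}_2$. First observe that the map $\lambda\mapsto G_\lambda$ is a bijection from $\Lambda_2$ onto the set of all subgraphs of the complete graph $K_n$ on the vertex set $\{1,\dots,n\}$, and that it restricts to a bijection from $\Lambda_2^{\even}$ onto the set of even-degree subgraphs of $K_n$. Hence $|\Lambda_2^{\even}|$ equals the number of even-degree subgraphs of $K_n$, and it suffices to show that this number is $2^{\binom{n-1}{2}}$.

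Let $E$ be the edge set of $K_n$, so $|E|=\binom{n}{2}$, and identify subgraphs of $K_n$ with vectors of $\mathbb{F}_2^{E}$, with $\oplus$ corresponding to vector addition. Consider the $\mathbb{F}_2$-linear map $\partial\colon\mathbb{F}_2^{E}\to\mathbb{F}_2^{n}$ sending a subgraph to its vector of vertex degrees reduced modulo $2$. The even-degree subgraphs are precisely $\ker\partial$, so I need $\dim\ker\partial$. The image of $\partial$ is contained in the hyperplane $H=\{v\in\mathbb{F}_2^n:\sum_{i}v_i=0\}$, since $\sum_i\deg(i)=2|E|$ for any graph; and it is all of $H$, because the single edge $\{i,j\}$ is mapped to $e_i+e_j$, and these vectors span $H$ as $\{i,j\}$ ranges over $E$. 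Therefore the image of $\partial$ has dimension $n-1$, and the rank-nullity theorem gives $\dim\ker\partial=\binom{n}{2}-(n-1)=\binom{n-1}{2}$, so $|\Lambda_2^{\even}|=2^{\binom{n-1}{2}}$.

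An alternative route, more in keeping with Lemma~\ref{Lem:GDecomp}, is to exhibit an explicit $\mathbb{F}_2$-basis for the space of even subgraphs, namely the $\binom{n-1}{2}$ ``star triangles'' $T_{\{i,j,n\}}$ with $1\le i<j\le n-1$. These span: given an even subgraph $G$, the graph $G'=G\oplus\bigoplus_{\{i,j\}\in E(G),\,i,j<n}T_{\{i,j,n\}}$ is again even-degree and has no edge inside $\{1,\dots,n-1\}$, so every edge of $G'$ is incident to $n$, which forces each vertex $i<n$ to have degree $0$ in $G'$; hence $G'$ is empty and $G$ is a sum of star triangles. They are linearly independent because an edge $\{i,j\}$ with $i,j<n$ lies in exactly one of the $T_{\{i,j,n\}}$. (That every even subgraph is some $\oplus$-combination of triangles at all is provided by Lemma~\ref{Lem:GDecomp}, together with the identity $T_{\{a,b,c\}}=T_{\{a,b,n\}}\oplus T_{\{b,c,n\}}\oplus T_{\{a,c,n\}}$, valid when $n\notin\{a,b,c\}$.)

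There is no real obstacle here; the one point that needs a moment's care is the claim that the image of $\partial$ is exactly $H$ --- equivalently, that the star triangles span the even-subgraph space --- which amounts to the standard computation of the dimension $\binom{n-1}{2}$ of the cycle space of the connected graph $K_n$.
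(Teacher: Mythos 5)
Your argument is correct. It does, however, take a genuinely different route from the paper's. The paper argues by a direct bijection: an even-degree graph on $\{1,\dots,n\}$ is the same as a zero-diagonal symmetric $(0,1)$-matrix whose rows and columns all have even weight, and such a matrix is uniquely determined by (and freely reconstructible from) its upper-left $(n-1)\times(n-1)$ block --- delete vertex $n$, note that the missing entries are forced by the parity constraints, and check that the $n$th row automatically comes out even. Since there are $2^{\binom{n-1}{2}}$ zero-diagonal symmetric $(n-1)\times(n-1)$ matrices, that is the count. Your main argument instead computes the dimension of $\ker\partial$ via rank--nullity, i.e.\ the standard computation that the cycle space of $K_n$ has dimension $|E|-(n-1)$. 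Both are clean; the paper's bijection is perhaps more elementary (no rank--nullity needed), while your version makes the $\mathbb{F}_2$-linear structure explicit, which dovetails nicely with Lemma~\ref{Lem:GDecomp}. Your alternative argument with the star triangles $T_{\{i,j,n\}}$ is in fact very close in spirit to the paper's bijection --- specifying the edges among $\{1,\dots,n-1\}$ and then filling in the edges to vertex $n$ by parity is precisely the inverse of the paper's restriction map --- so you have, in effect, rediscovered the paper's proof as your ``alternative'' and supplied an independent one as your main argument.
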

\begin{proof}
  Each graph with all degrees even on $n$ vertices corresponds to a
  zero-diagonal $n\times n$ symmetric $(0,1)$-matrix all of whose rows
  and columns   have even weight, and each such matrix corresponds to
  a unique zero-diagonal $(n-1)\times(n-1)$ symmetric $(0,1)$-matrix.
  Since there are exactly $2^{\binom{n-1}{2}}$ such matrices, we have
  $|\Lambda_2^\even|= 2^{\binom{n-1}{2}}$.
\end{proof}

\begin{proof}[Proof of Lemma \ref{Lem:Psilambda1}]
  By Lemma \ref{Lem:Lambda2EvenSize}, the size of
  $\Lambda_2^\even$ equals $2^{d-n+1}$.  By
  \eqref{Eq:LambdaDecomp}, $|\Lambda| = 2^{2d-n+1}$.

  Let $\mu = \lambda^{(\{a,b,c\})}$.
  By \eqref{Eq:LambdaTri}, $\psi(\mu) = -\i$.  
  For all $\lambda \in \Lambda$, by 
  Lemma~\ref{Lem:LambdaFacts}\eqref{Part:Psilambda2},
  $\psi(\lambda + \mu) = -\i \psi(\lambda)$.
  Therefore, the multi-set $\{\psi(\lambda) \st \lambda \in \Lambda\}$
  has the composition as stated in the lemma.
\end{proof}

Proposition~\ref{Prop:RetProbPrimSecond}
leaves us with the problem of computing the integral of
$\psi(\gamma)^{4t}$ over two regions: the \emph{primary region}
$\B_\delta$ and the {\em secondary region} 
$R_\delta = \B_\pi \setminus \bigcup_{\lambda \in \Lambda}
\B_\delta(\lambda)$. 
We conclude this section by dividing the secondary region $R_\delta$
into convenient pieces.  

\begin{lemma} \label{Lem:RDecomp}
  If $\Lambda_2^\odd$ is the set of the elements of
  $\Lambda_2$ whose associated graph has at least one odd degree,
  then 
  \begin{equation}\label{Eq:RDeltaDecomp}
    R_\delta
    = \Bigg[\bigcup_{\lambda\in \Lambda_1+\Lambda_2^\even}
    \B_{\frac{\pi}{4}}(\lambda) \setminus \B_{\delta}(\lambda) 
    \Biggr]
    \cup
    \Biggl[ \bigcup_{\lambda\in \Lambda_1+\Lambda_2^\odd}
    \B_{\frac{\pi}{4}}(\lambda) \Biggr]  \,,
  \end{equation}
  where the sets in the union are disjoint.
\end{lemma}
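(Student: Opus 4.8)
The plan is to reduce the statement to a coordinate‑by‑coordinate fact on the torus $(\R/2\pi\Z)^d$, on which $\B_\pi=[-\pi,\pi]^d$ is a fundamental domain and on which the integrand $\psi(\lambda)^{4t}$ is well defined (each component of $Z(y)$ lies in $\Z$, so $\psi$ is $2\pi$‑periodic in every coordinate). Throughout, the boxes $\B_\delta(\lambda)$ and $\B_{\frac{\pi}{4}}(\lambda)$ are to be read modulo $2\pi$, so that a box centred at coordinate value $\pi$ wraps around to meet the boxes centred near $-\pi$. Two ingredients then combine to give \eqref{Eq:RDeltaDecomp}: a purely geometric "tiling" of $\B_\pi$ by the boxes $\{\B_{\frac{\pi}{4}}(\nu):\nu\in\Lambda_0\}$, and the index‑set identity $\Lambda_0=\Lambda\sqcup(\Lambda_1+\Lambda_2^{\odd})$.

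First I would establish the geometric skeleton. In a single coordinate the allowed centres $\{0,\pm\pi/2,\pi\}$ are four equally spaced points on the circle $\R/2\pi\Z$, consecutive ones at distance $\pi/2$, so the four closed arcs of radius $\pi/4$ about them cover the circle and overlap only at the four midpoints $\{\pm\pi/4,\pm 3\pi/4\}$. Taking the $d$‑fold product gives $\bigcup_{\nu\in\Lambda_0}\B_{\frac{\pi}{4}}(\nu)=\B_\pi$, while two boxes with distinct centres $\nu\neq\nu'$ in $\Lambda_0$ differ by at least $\pi/2$ (mod $2\pi$) in some coordinate, so their intersection lies in a coordinate hyperplane and has Lebesgue measure zero. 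I would also record the companion fact, used below, that for $\nu\neq\nu'$ in $\Lambda_0$ and $\delta<\pi/4$ one has $\B_\delta(\nu')\cap\B_{\frac{\pi}{4}}(\nu)=\emptyset$, since $\delta+\pi/4<\pi/2$ while the centres differ by $\ge\pi/2$ in some coordinate.

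Next is the bookkeeping on index sets. The text has already shown that $\Lambda_0=\Lambda_1+\Lambda_2$ with the representation $\lambda\equiv\lambda^{(1)}+\lambda^{(2)}\pmod{2\pi}$ unique, and that $\Lambda=\Lambda_1+\Lambda_2^{\even}$ (from $\Lambda=\Lambda_1+\Lambda_2^\star$ together with Lemma~\ref{Lem:LambdaChar}). Since $\Lambda_2=\Lambda_2^{\even}\sqcup\Lambda_2^{\odd}$, uniqueness of the decomposition forces $\Lambda_0=(\Lambda_1+\Lambda_2^{\even})\sqcup(\Lambda_1+\Lambda_2^{\odd})=\Lambda\sqcup(\Lambda_1+\Lambda_2^{\odd})$, a genuinely disjoint union with both pieces contained in $\Lambda_0$ (the cardinality check $2^{2d-n+1}+(2^{2d}-2^{2d-n+1})=4^d$ is consistent).

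Finally I would assemble the identity. Starting from $\B_\pi=\bigl(\bigcup_{\lambda\in\Lambda}\B_{\frac{\pi}{4}}(\lambda)\bigr)\cup\bigl(\bigcup_{\mu\in\Lambda_1+\Lambda_2^{\odd}}\B_{\frac{\pi}{4}}(\mu)\bigr)$ and removing $\bigcup_{\lambda\in\Lambda}\B_\delta(\lambda)$, the companion disjointness fact shows that each $\B_{\frac{\pi}{4}}(\mu)$ with $\mu\in\Lambda_1+\Lambda_2^{\odd}$ meets no $\B_\delta(\lambda)$, $\lambda\in\Lambda$ (as $\mu\neq\lambda$), hence survives intact, while for $\lambda_0\in\Lambda$ the only $\B_\delta(\lambda)$ meeting $\B_{\frac{\pi}{4}}(\lambda_0)$ is $\B_\delta(\lambda_0)\subset\B_{\frac{\pi}{4}}(\lambda_0)$, leaving $\B_{\frac{\pi}{4}}(\lambda_0)\setminus\B_\delta(\lambda_0)$. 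Distributing the set difference over the union and using $\Lambda=\Lambda_1+\Lambda_2^{\even}$ yields exactly \eqref{Eq:RDeltaDecomp}; disjointness of the displayed union is then the measure‑zero overlap of the skeleton step (which is all Section~\ref{Sec:EstProb} needs, and becomes literal disjointness if one uses half‑open boxes). There is no deep obstacle here: the only point requiring care is the wraparound at $\partial\B_\pi$, i.e.\ reading the radius‑$\frac{\pi}{4}$ boxes modulo $2\pi$ and "disjoint" up to a null set; everything else is a product‑of‑intervals computation.
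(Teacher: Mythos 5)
Your proof is correct and follows essentially the same route as the paper: tile $\B_\pi$ by the boxes $\B_{\pi/4}(\nu)$, $\nu\in\Lambda_0$, using the unique (mod $2\pi$) decomposition $\gamma\equiv\lambda(\gamma)+\delta(\gamma)$ with $\delta(\gamma)\in\B_{\pi/4}$, then split the index set via $\Lambda_0\setminus\Lambda=\Lambda_1+\Lambda_2^\odd$ and $\Lambda=\Lambda_1+\Lambda_2^\even$. Your remark that the $\B_{\pi/4}(\nu)$ overlap on a measure-zero set (so the "disjointness" should be read as interior-disjoint or with half-open boxes) is a fair clarification the paper glosses over, but it is the same argument.
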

\begin{proof}
  The unit circle can be divided up into four shifted pieces of
  length $\tfrac{\pi}{2}$ centered on the points $1, 
  e^{\rmi \tfrac{\pi}{2}}, e^{-\rmi \tfrac{\pi}{2}}$, and 
  $e^{\rmi \pi}$.   Therefore,
  any $\gamma\in \B_\pi$ may be written uniquely in the form
  \begin{equation*}
    \gamma\equiv\lambda(\gamma)+\delta(\gamma)\pmod{2\pi}\,,
  \end{equation*} 
  where $\delta(\gamma)\in \B_{\frac{\pi}{4}}$ and
  $\lambda(\gamma)\in \Lambda_0$.  Thus
  \begin{equation*}
    \B_\pi = \bigcup_{\lambda\in \Lambda_0}
    \B_{\frac{\pi}{4}}(\lambda)  \,,
  \end{equation*}
  and
  \begin{equation} \label{Eq:RDecomp1}
    R_\delta
    =
    \Biggl[ \bigcup_{\lambda\in \Lambda_0 \setminus \Lambda}
    \B_{\frac{\pi}{4}}(\lambda) \Biggr]
    \cup \Biggl[\bigcup_{\lambda\in\Lambda}
    \B_{\frac{\pi}{4}}(\lambda) \setminus \B_{\delta}(\lambda)
    \Biggr] \,.
  \end{equation}  
  Recall that $\Lambda_0 = \Lambda_1 + \Lambda_2$, and
  $\Lambda_2 \cap \Lambda = \Lambda_2^\even$.   Since
  $\Lambda_2 = \Lambda_2^\even \cup \Lambda_2^\odd$,
  \begin{equation*}
    \Lambda_0 \setminus \Lambda = \Lambda_1 + \Lambda_2^\odd
    \quad \text{and} \quad
    \Lambda = \Lambda_1 + \Lambda_2^\even \,.
  \end{equation*}
  The identity \eqref{Eq:RDeltaDecomp} now follows from
  \eqref{Eq:RDecomp1}.
\end{proof}

In subsequent sections, we derive upper bounds for the integrals
\begin{equation*}
  \int_{\B_\rho(\lambda)} 
  \psi(\gamma)^{4t}
  d\gamma
\end{equation*}
which depend only on whether $\lambda$ is in $\Lambda_2^\even$ or
$\Lambda_2^\odd$.

\section{Estimates for $\psi(\lambda)$} \label{Sec:EstPsi}
In this section, we obtain estimates for the magnitude, the real part,
and the imaginary part of $\psi(\lambda)$.  As a
corollary, we obtain an upper bound for the integral over
the secondary region.

\begin{lemma} \label{Lem:LemmaB}
  The following bounds hold over the entire region $\B_\pi$:
  \begin{align} \label{Eq:MagnPsiUnivUB}
    |\psi(\lambda)|^{2} &\leq
    \tfrac{1}{2}+\tfrac{1}{2}\prod_{i \st i\not=k}^{n-1}\cos(2\lambda_{\{i,k\}})
    \,,\\ \label{Eq:RePsiUniLB}
    \Re(\psi(\lambda)) &\geq1-\tfrac{1}{2}\|
    \lambda\|^2\,.
  \end{align}
  Suppose $\delta>0$, and that $\lambda\in \B_\delta$.  Then
  \begin{align}\label{RePsi}
    \Re(\psi(\lambda))&=e^{-\frac{1}{2}\|\lambda\|^2}
    \bigl(1+\epsilon_1(\lambda)\bigr)\,,\\
    \label{ImPsi}
    \Im(\psi(\lambda)) &= -\sum_{i<j<k} \lambda_{\{i,j\}}
    \lambda_{\{j,k\}} \lambda_{\{k,i\}} +\epsilon_2(\lambda)\,,
  \end{align}
  where
  $|\epsilon_1(\lambda)|<\tfrac{1}{12}(n\delta)^4
  e^{\frac{1}{2}n^2\delta^2}$,
  and $|\epsilon_2(\lambda)|<\tfrac{1}{12}(n\delta)^4$.
\end{lemma}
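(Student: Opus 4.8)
The plan is to compute $\psi(\lambda)$ explicitly and then expand. First I would record the basic formula
\[
\psi(\lambda) = 2^{-n}\sum_{y\in\V_n} e^{\i\lambda\cdot Z(y)}
= 2^{-n}\sum_{y\in\V_n}\exp\Bigl(\i\sum_{i<j}\lambda_{\{i,j\}}y_iy_j\Bigr).
\]
To get the magnitude bound \eqref{Eq:MagnPsiUnivUB}, I would use $|\psi(\lambda)|^2 = \psi(\lambda)\overline{\psi(\lambda)} = 2^{-2n}\sum_{y,w} e^{\i\lambda\cdot(Z(y)-Z(w))}$ and substitute $w = \hat y^{(S)}$, i.e. flip the signs of $y$ on a subset $S\subset\{1,\dots,n\}$; as in the proof of Lemma~\ref{Lem:LambdaFacts}, flipping coordinate $k$ alone contributes a factor involving $\sum_{i\neq k}\lambda_{\{i,k\}}y_i$. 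Averaging $e^{\i(\cdots)}$ over $y$ factorizes into a product of cosines, and singling out the identity and one single-coordinate flip (say flipping coordinate $n$, contributing $\tfrac12 + \tfrac12\prod_{i\neq n}\cos(2\lambda_{\{i,n\}})$) while discarding the remaining nonnegative-after-grouping terms gives the stated upper bound. I expect the bookkeeping of which reflections to keep — enough to get a clean product but few enough that the leftover is genuinely bounded by what's discarded — to be the one genuinely fiddly point here.

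Next, for the real and imaginary parts I would write $\psi(\lambda) = 2^{-n}\sum_y\cos(\lambda\cdot Z(y)) + \i\,2^{-n}\sum_y\sin(\lambda\cdot Z(y))$, so that $\Re\psi(\lambda) = \E[\cos(\lambda\cdot Z(Y))]$ and $\Im\psi(\lambda) = \E[\sin(\lambda\cdot Z(Y))]$ with $Y$ uniform on $\V_n$. For the universal lower bound \eqref{Eq:RePsiUniLB} I would use the elementary inequality $\cos u \geq 1 - u^2/2$ pointwise and then $\E[(\lambda\cdot Z(Y))^2] = \sum_{i<j}\lambda_{\{i,j\}}^2 = \|\lambda\|^2$ (cross terms vanish since $\E[y_iy_jy_ky_\ell]=0$ when $\{i,j\}\neq\{k,\ell\}$). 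For \eqref{RePsi} and \eqref{ImPsi} on the small box $\B_\delta$, I would Taylor-expand $\cos$ and $\sin$ with explicit remainder. The key computation is the expectation of powers of $S := \lambda\cdot Z(Y) = \sum_{i<j}\lambda_{\{i,j\}}y_iy_j$. One has $\E[S] = 0$, $\E[S^2] = \|\lambda\|^2$, $\E[S^3] = 6\sum_{i<j<k}\lambda_{\{i,j\}}\lambda_{\{j,k\}}\lambda_{\{k,i\}}$ (the only monomials $y_ay_by_cy_d\cdots$ surviving the average are those where every index appears an even number of times, and for a product of three distinct pairs this forces a triangle, with $3! = 6$ orderings), and $|\E[S^4]| \le (n\delta)^4$-type bounds via $|S| \le \sum_{i<j}|\lambda_{\{i,j\}}| \le \binom n2\delta \le \tfrac12 n^2\delta$, or more carefully $\E[S^4]\le 3(\E[S^2])^2 + \text{(small)}$, controlled by $\|\lambda\|^2 \le \binom n2\delta^2$.

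Concretely, for the imaginary part: $\sin S = S - S^3/6 + r_3(S)$ with $|r_3(S)| \le |S|^5/120$; taking expectations, $\E[\sin S] = -\sum_{i<j<k}\lambda_{\{i,j\}}\lambda_{\{j,k\}}\lambda_{\{k,i\}} + \E[r_3(S)]$, wait — I must be careful: the $S$ term vanishes and the $S^3/6$ term gives exactly $-\sum_{i<j<k}\lambda\lambda\lambda$, and I then absorb $\E[r_3(S)]$ into $\epsilon_2$, bounding $|\epsilon_2(\lambda)| \le \tfrac1{120}\E[|S|^5] \le \tfrac1{120}(n\delta)^? $; to land the stated $\tfrac1{12}(n\delta)^4$ I would instead use the degree-3 Taylor remainder (error $O(S^4)$) since $\sin$ has no quartic term, giving $|\E[\sin S + S^3/6 - S]| \le \tfrac1{24}\E[S^4] \le \tfrac1{24}(n\delta)^4$, or sharpen the combinatorial count of $\E[S^4]$ to get the constant $\tfrac1{12}$. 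For the real part: $\cos S = 1 - S^2/2 + r_2(S)$ with $|r_2(S)| \le S^4/24$, so $\E[\cos S] = 1 - \tfrac12\|\lambda\|^2 + \E[r_2(S)]$; then I factor out $e^{-\|\lambda\|^2/2}$, writing $1 - \tfrac12\|\lambda\|^2 = e^{-\|\lambda\|^2/2}(1 + O(\|\lambda\|^4))$, and combine with the remainder, bounding everything by $\tfrac1{12}(n\delta)^4 e^{n^2\delta^2/2}$ using $\|\lambda\|^2 \le \binom n2\delta^2 \le \tfrac12 n^2\delta^2$ and $e^{-\|\lambda\|^2/2} \le 1$. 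The main obstacle is purely the constants: getting exactly $\tfrac1{12}$ and the exponential factor $e^{\frac12 n^2\delta^2}$ requires careful moment estimates for $S$ (in particular a clean bound $\E[S^4] \le$ something like $\tfrac12(n\delta)^4$ or $3\|\lambda\|^4 + \text{cross terms}$) rather than the crude $|S| \le \tfrac12 n^2\delta$; I would organize this by expanding $S^4$ combinatorially into triangle-type and "two-pair" contributions and bounding each, which is routine but the place where one can lose the sharp constant if careless.
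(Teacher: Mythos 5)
Your treatment of \eqref{Eq:RePsiUniLB}, \eqref{RePsi}, and \eqref{ImPsi} is essentially the paper's: write $\Re\psi=\E[\cos S]$, $\Im\psi=\E[\sin S]$ with $S=\lambda\cdot Z(Y)$, Taylor-expand with explicit remainders, and compute $\E[S]=0$, $\E[S^2]=\|\lambda\|^2$, $\E[S^3]=6\sum_{i<j<k}\lambda_{\{i,j\}}\lambda_{\{j,k\}}\lambda_{\{i,k\}}$. The one thing you leave dangling is the fourth-moment bound; the paper proves $\E[S^4]\leq\tfrac{11}{6}(n\delta)^4$ for $n\geq3$ by separating the even-degree multigraphs on four edges into a repeated edge, two distinct repeated edges, and $4$-cycles, and this is what delivers the constants $\tfrac1{12}$ in $\epsilon_1,\epsilon_2$ (via $\tfrac1{24}\cdot\tfrac{11}{6}=\tfrac{11}{144}<\tfrac1{12}$). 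Your suggested $\E[S^4]\leq(n\delta)^4$ is not correct as stated (the two-pair and $4$-cycle contributions already exceed this), so the ``routine bookkeeping'' you defer really is where the work lies; but the structure of your argument is right.

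Your approach to \eqref{Eq:MagnPsiUnivUB}, however, has a genuine gap. Writing $|\psi|^2=2^{-2n}\sum_{y,w}e^{\i\lambda\cdot(Z(y)-Z(w))}$ and substituting $w=\hat y^{(S)}$ gives $|\psi|^2=2^{-n}\sum_{S\subset\{1,\dots,n\}}T_S$ with $T_S=2^{-n}\sum_y e^{2\i\sum_{|S\cap\{i,j\}|=1}\lambda_{\{i,j\}}y_iy_j}$. Only for $|S|=1$ (or $|S|=n-1$) does the inner average factor into a product of cosines; for general $S$ the exponent is a genuine bilinear form in the $y_i$ and does not factorize. Your accounting is also off by a factor of $2^{n-1}$: the identity and the single flip $\{n\}$ contribute $2^{-n}+2^{-n}\prod_{i\neq n}\cos(2\lambda_{\{i,n\}})$, not $\tfrac12+\tfrac12\prod\cos$. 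Finally, ``discarding the remaining nonnegative-after-grouping terms'' would produce a \emph{lower} bound, not the upper bound you need; to close the gap you would have to show $\sum_{S}T_S\leq 2^{n-1}(1+T_{\{n\}})$, which is true but needs an argument you have not supplied. The paper avoids all of this by bounding $|\psi|$ \emph{before} squaring: pulling the $y_k$-sum out of $\psi$ itself gives
\[
\psi(\lambda)=2^{1-n}\sum_{z\in\V_{n-1}}\cos\bigl(p_k(\lambda)\cdot z\bigr)\,e^{\i P_k(\lambda)\cdot Q_k(z)}\,,
\]
whence $|\psi(\lambda)|\leq 2^{-(n-1)}\sum_z|\cos(p_k(\lambda)\cdot z)|$; Jensen then gives $|\psi|^2\leq 2^{-(n-1)}\sum_z\cos^2(p_k(\lambda)\cdot z)$, the double-angle identity $2\cos^2\theta=1+\cos2\theta$ brings in the constant $\tfrac12$, and the remaining average $2^{-(n-1)}\sum_z\cos(2p_k(\lambda)\cdot z)$ is a linear (not bilinear) form in $z$, hence factorizes cleanly into $\prod_{i\neq k}\cos(2\lambda_{\{i,k\}})$. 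You should replace the $\psi\bar\psi$ expansion with this single-sum factorization.
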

\begin{proof}
  We first prove \eqref{Eq:MagnPsiUnivUB}.
  Let
  $y\in \V_n$ and $\lambda\in\R^d$. 
  For $i\in\{1,2,\dots,n\}$,
  define
  \begin{eqnarray*}
    p_i(\lambda) &=& (\lambda_{\{1,i\}},\lambda_{\{2,i\}},\dots,
    \lambda_{\{i-1,i\}},\lambda_{\{i,i+1\}}, \dots,\lambda_{\{i,n\}})\,,\\
    Z_i(y) &=& (y_{1},y_{2},\dots,y_{i-1},y_{i+1}, \dots,y_{n})\,,\\
    P_i(\lambda) &=& (\lambda_{\{j,k\}})_{1\leq j<k\leq n,\, j,k\not=i}\,,\\
    Q_i(y) &=& (y_{j}y_{k})_{1\leq j<k\leq n,\, j,k\not=i}\,.
  \end{eqnarray*}
  Note that $p_i(\lambda)$ and $Z_i(y)$ are in $\R^{n-1}$, 
  and $P_i(\lambda)$
  and $Q_i(y)$ are in $\R^{d-n+1}$.  Also, the maps $p_i$ and
  $P_i$ are linear.  Recalling the definition of $Z(y)$ for
  $y \in \V_n$ in \eqref{Eq:ZDefn}, observe that
  for all $i\in\{1,\ldots,n\}$,
  \begin{equation*}
    Z(y) \cdot \lambda = p_i(\lambda) \cdot y_i Z_i(y) + P_i(\lambda)
    \cdot Q_i(y) \,.
  \end{equation*}
  For all $k\in\{1,2,\cdots,n\}$, 
  \begin{eqnarray*}
    \psi(\lambda)
    &=&2^{-n}\sum_{y \in \V_n}
    e^{\rmi  p_k(\lambda)\cdot y_k Z_k(y)}
    e^{\rmi P_k(\lambda)\cdot Q_k( Z_k(y))}\\
    &=&2^{1-n}\sum_{z\in \V_{n-1}}
    \tfrac{1}{2}\left[e^{\rmi  p_k(\lambda)\cdot z}
      +e^{-\rmi p_k(\lambda)\cdot z}\right]
    e^{\rmi P_k(\lambda)\cdot Q_k(z)}\,.
  \end{eqnarray*}
  Therefore,
  \begin{align*}
    |\psi(\lambda)| 
      & \leq \frac{1}{2^{n-1}} \sum_{z\in \V_{n-1}}\tfrac{1}{2}
      \bigl|e^{\rmi p_k(\lambda)\cdot z}+e^{-\rmi p_k(\lambda)\cdot
        z}\bigr| \\
      & = \frac{1}{2^{n-1}} \sum_{z \in \V_{n-1}} \left| \cos( p_k(\lambda) \cdot
        z) \right| \,.
  \end{align*}
  By Jensen's inequality, 
  \begin{equation} \label{Eq:psi1}
    |\psi(\lambda)|^2 \leq
    2^{-(n-1)} \sum_{z \in \V_{n-1}} \cos^2( p_k(\lambda) \cdot
     z) \,.
  \end{equation}
  Since $2\cos^2 \theta = 1 + \cos(2\theta)$,
  \begin{equation} \label{Eq:psi2}
    |\psi(\lambda)|^2 \leq
    \frac{1}{2}\left[1 + 2^{-(n-1)}\sum_{z \in \V_{n-1}} \cos(2 p_k(\lambda)
    \cdot z) \right]\,.
  \end{equation}
  Since $\sin(-\theta) = \sin(\theta)$, and, since
  $\V_{n-1}$ is a symmetric set, it
  follows that $\sum_{z \in \V_{n-1}} \sin(2p_k(\lambda) \cdot z) =0$.
  Thus, since $e^{\i\theta} = \cos \theta + \i\sin \theta$,
  \begin{equation} \label{Eq:psi3}
     2^{-(n-1)} \sum_{z \in \V_{n-1}} e^{\rmi  2 p_k(\lambda) 
       \cdot z}
     = 2^{-(n-1)} \sum_{z \in \V_{n-1}} \cos(2 p_k(\lambda) \cdot z)
    \,.
  \end{equation}
  Combining \eqref{Eq:psi2} and \eqref{Eq:psi3}
  shows that if $\xi$ is a random uniformly distributed element
  of $\V_{n-1}$, then
  \begin{equation} \label{Eq:PsiPenult}
    |\psi(\lambda)|^2 \leq \frac{1}{2}\left[1
      + \E\left( e^{\rmi 2 p_k(\lambda) \cdot \xi} 
    \right) \right]\,.
  \end{equation}
  The coordinates $\xi_1,\ldots,\xi_{n-1}$ of $\xi$ are
  independent unbiased random $\pm 1$'s.  Therefore, 
  because the components of $p_k(\lambda)$ are
  $\{ \lambda_{\{i,k\}} \st i \neq k\}$,
  \begin{multline} \label{Eq:PsiInd}
    \E\left( e^{\rmi 2 p_k(\lambda) \cdot \xi} \right)
    = \prod_{j=1}^{n-1} \E\left( e^{\rmi 2 [p_k(\lambda)]_j \xi_j}
    \right)\\
    = \prod_{i \st i \neq k} \left[ 
      \frac{1}{2}e^{\rmi 2 \lambda_{\{i,k\}}} +
      \frac{1}{2}e^{-\rmi 2 \lambda_{\{i,k\}}} \right]
    = \prod_{i \st i \neq k} \cos{2 \lambda_{\{i,k\}}} \,.
  \end{multline}
  Substituting \eqref{Eq:PsiInd} into \eqref{Eq:PsiPenult} establishes
  \eqref{Eq:MagnPsiUnivUB}.


  Next we deal with the bounds for $\Re(\psi(\lambda))$ and 
  $\Im(\psi(\lambda))$.   We use the following bounds on the remainder
  in the Taylor expansion of the exponential: For
  $a\geq0$ and $b$ real,
  \begin{align}
    \label{Eq:Taylor1}
    \Bigl|e^{-a}-\sum_{s=0}^k\frac{(-a)^s}{s!}\Bigr|
    &\leq\min\Bigl\{\frac{2|a|^k}{k!},\frac{|a|^{k+1}}{(k+1)!}\Bigr\}\,,\\
    \label{Eq:Taylor2}
    \Bigl|e^{\rmi b}-\sum_{s=0}^k\frac{(\rmi b)^s}{s!}\Bigr|
    &\leq\min\Bigl\{\frac{2|b|^k}{k!},\frac{|b|^{k+1}}{(k+1)!}\Bigr\}\,.
  \end{align}
  (Equation \eqref{Eq:Taylor2} can be found as, for example,
  \cite[equation 26.4]{billingsley}; the derivation of
  equation \eqref{Eq:Taylor1} is similar.)
  Equation \eqref{Eq:Taylor1} with $k=2$ implies
  \begin{equation} \label{Eq:eToNormLa}
    \Bigl|e^{-\frac{1}{2}\|\lambda\|^2}
    -(1-\tfrac{1}{2}\|\lambda\|^2)\Bigr|
    \leq 
    \tfrac{1}{6}\|\lambda\|^4 \,;
  \end{equation}
  equation \eqref{Eq:Taylor2} with $k=2$
  implies
  \begin{equation*}
    \Bigl|e^{\rmi \lambda\cdot Z(y)}- \bigl[ 1+\rmi\lambda\cdot
    Z(y)\bigr]\Bigr| \leq \tfrac{1}{2}(\lambda\cdot Z(y))^2
    \,;
  \end{equation*}
  equation \eqref{Eq:Taylor2} with $k=3$ implies
  \begin{multline*}
    \Bigl|e^{\rmi \lambda\cdot Z(y)}- \big[1+\rmi\lambda\cdot Z(y)
    -\tfrac{1}{2}(\lambda\cdot Z(y))^2
    -\tfrac{\rmi}{6}(\lambda\cdot Z(y))^3\bigr]\Bigr| \\
    \leq
    \tfrac{1}{24}(\lambda\cdot Z(y))^4 \,.
  \end{multline*}
  Since $|\Re(z)| \leq |z|$ and $|\Im(z)|\leq|z|$ for all $z \in \C$,
  we have
  \begin{align}
    \Bigl|\Im(e^{\rmi \lambda\cdot Z(y)})- \bigl[\lambda\cdot
    Z(y)-\tfrac{1}{6}(\lambda\cdot Z(y))^3\bigr]\Bigr| &\leq
    \tfrac{1}{24}(\lambda\cdot Z(y))^4
    \,, \label{Eq:Im1}\\
    \Bigl|\Re(e^{\rmi \lambda\cdot Z(y)})-
    \bigl[1-\tfrac{1}{2}(\lambda\cdot Z(y))^2\bigr] \Bigr| &\leq
    \tfrac{1}{24}(\lambda\cdot Z(y))^4
    \,,\\
    \Bigl|\Re(e^{\rmi \lambda\cdot Z(y)})-1\Bigr| &\leq
    \tfrac{1}{2}(\lambda\cdot Z(y))^2 \,.
  \end{align}
  Let $\xi$ be a uniform random element of $\V_n$.
  From \eqref{Eq:Im1},
  \begin{align} 
    \Bigl| \E\left[ \Im(e^{\rmi \lambda \cdot Z(\xi)})\right]
      & - \E \left[ \lambda \cdot Z(\xi)
        + \tfrac{1}{6}(\lambda \cdot Z(\xi))^3 \right] \Bigr|
     \nonumber \\
    & \leq \E \left|\Im(e^{\rmi \lambda \cdot Z(\xi)})
       - \left[ \lambda \cdot Z(\xi)
        + \tfrac{1}{6}(\lambda \cdot Z(\xi))^3 \right] \right| 
    \nonumber \\
    & \leq \E\left[ \tfrac{1}{24} (\lambda \cdot Z(\xi))^4
    \right] 
    \label{Eq:ImPsi1p} 
  \end{align} 
  Since $\Im$ is linear, $\E\left[ \Im(e^{\rmi \lambda \cdot
    Z(\xi)})\right] = \Im\left( \psi(\lambda) \right)$, whence 
    \eqref{Eq:ImPsi1p}
  implies 
  \begin{equation} \label{Eq:ImPsi1}
    \Bigl| \Im\left( \psi(\lambda)\right) - \E\left[ \lambda \cdot Z(\xi)
      \right] 
      - \tfrac{1}{6} \E\left[ (\lambda \cdot Z(\xi))^3 \right] 
      \Bigr| 
    \leq \E\left[ \tfrac{1}{24} (\lambda \cdot Z(\xi))^4 \right]
      \,. 
  \end{equation}
  Similarly, we have
  \begin{gather}
    \Bigl|\Re(\psi(\lambda))-\bigl(1
    -\tfrac{1}{2}\E[(\lambda\cdot Z(\xi))^2]\bigr)\Bigr| \leq
    \tfrac{1}{24}\E[(\lambda\cdot Z(\xi))^4] \,,
    \label{Eq:Psi1}  \\
    \intertext{and}
    \Bigl|\Re(\psi(\lambda))-1\Bigr| \leq
    \tfrac{1}{2}\E[(\lambda\cdot Z(\xi))^2] 
    \,. \label{Eq:Psi1a}
  \end{gather}
  Our goal now is to compute the above expectations. 
  For all non-negative integers $s$,
  \begin{equation*}
    \E[(\lambda\cdot Z(\xi))^s] 
    =
    \sum_{1\leq k_1<\ell_1\leq n}
    \sum_{1\leq k_2<\ell_2\leq n}
    \dots
    \sum_{1\leq k_s<\ell_s\leq n}
    \prod_{j=1}^s\lambda_{k_j\ell_j}
    \E\Bigl[\prod_{j=1}^s\xi_{k_j}\xi_{\ell_j}\Bigr]\,.
  \end{equation*}
  For each multi-set $S =
  \{\{k_1,\ell_1\},\{k_2,\ell_2\},\dots,\{k_s,\ell_s\}\}$,
  let $N_S$ be the network on the vertices
  $\{1,2,\dots,n\}$ with the edge set $S$, where
  repeated elements in $S$ correspond to multiple
  edges between vertices.
  Observe that
  \begin{equation*}
    \E\left[ \prod_{j=1}^s\xi_{k_j}\xi_{\ell_j} \right]
    =
    \begin{cases}
      1 & \text{if all vertices in $N_s$ have even degree}, \\
      0 & \text{otherwise} \,.
    \end{cases}
  \end{equation*}
%
%
  Therefore,
  \begin{equation*}
    \E[(\lambda\cdot Z(\xi))^s]=\sum_{N_S}w(N_S)\prod_{\{k,\ell\}\in
      S}\lambda_{\{k,\ell\}}\,.
  \end{equation*}
  Here $N_S$ ranges over all the networks on the vertices
  $\{1,2,\dots,n\}$ having $s$ edges and all degrees even, and
  $w(N_S)$ is a multinomial coefficient determined by the number of
  times each edge appears in $N_S$.

  For $s=1$, there are no even-degree networks.  Therefore,
  \begin{equation*}
    \E[\lambda\cdot Z(\xi)]=0\,.
  \end{equation*}  
  For $s=2$, the even-degree networks are the two-vertex networks
  $N_S$ with a single repeated edge
  $S=\{\{k_1,\ell_1\},\{k_1,\ell_1\}\}$, and the weights $w(N_S)$ all
  equal $2!/2!=1$.  Thus
  \begin{equation*}
    \E[(\lambda\cdot Z(\xi))^2]=\|\lambda\|^2\,.
  \end{equation*}
  Equation \eqref{Eq:Psi1a} therefore implies that
  \begin{equation*}
    \Bigl|\Re(\psi(\lambda))-1\Bigr|
    \leq 
    \tfrac{1}{2}\|\lambda\|^2 \,,
  \end{equation*}
  from which \eqref{Eq:RePsiUniLB} follows.

  For $s=3$, the even-degree networks are just triangles 
  with the edges
  \begin{equation*}
    \{\{k_1,k_2\}, \{k_2,k_3\},\{k_1,k_3\}\} \,,
  \end{equation*} 
  where $1\leq k_1<k_2<k_3\leq n$, and the
  weights $w(N_S)$ are all $3!/(1!)^3$, as there are three edges and
  each edge appears just once.  Therefore,
  \begin{equation*}
    \E\left[(\lambda\cdot Z(\xi))^3 \right]
    =\sum_{i<j<k}3!\lambda_{\{i,j\}}\lambda_{\{j,k\}}\lambda_{\{i,k\}} \,.
  \end{equation*}
  Thus the inequalities \eqref{Eq:ImPsi1} and \eqref{Eq:Psi1}
  become
  \begin{align}\label{Eq:ImPsi2} 
    \Bigl|\Im(\psi(\lambda)) + 
    \sum_{i<j<k}\lambda_{\{i,j\}}\lambda_{\{j,k\}}\lambda_{\{i,k\}}
    \Bigr| &\leq \tfrac{1}{24}\E[(\lambda\cdot Z(\xi))^4]
    \,,\\
    \Bigl|\Re(\psi(\lambda))-\bigl(1
    -\tfrac{1}{2}\|\lambda\|^2\bigr)\Bigr| &\leq
    \tfrac{1}{24}\E[(\lambda\cdot Z(\xi))^4] \,.
    \label{Eq:RePsi2} 
  \end{align}
  Moreover, applying the triangle inequality to the inequalities
  \eqref{Eq:eToNormLa} and \eqref{Eq:RePsi2} shows that
  \begin{equation}  \label{Eq:RePsi3a}
    \Bigl|\Re(\psi(\lambda))-e^{-\frac{1}{2}\|\lambda\|^2}\Bigr| \leq
    \tfrac{1}{6}\|\lambda\|^4 +\tfrac{1}{24}\E[(\lambda\cdot
    Z(\xi))^4] \,.
  \end{equation}

  Finally, for $s=4$, there are several classes: (a) $4$-cycles, (b) an
  edge repeated four times, (c) two non-adjacent edges repeated twice,
  and (d) two adjacent edges repeated twice.  Thus
  \begin{align*}
    \E\left[(\lambda\cdot Z(\xi))^4\right]
    =&\sum_{i_1<j_2}\lambda_{\{i_1,i_2\}}^4
    +4!/(2!)^2\sum_{\{i_1,i_2\}\atop{\not=\{i_3,i_4\}}}\lambda_{\{i_1,i_2\}}^2
    \lambda_{\{i_3,i_4\}}^2\\
    &+4!\sum_{\substack{i_1,i_2,i_3,i_4\\ \text{distinct}}}
    \lambda_{\{i_1,i_2\}}\lambda_{\{i_2,i_3\}}
    \lambda_{\{i_3,i_4\}}\lambda_{\{i_4,i_1\}}\,.
  \end{align*}
  Therefore, for $\lambda \in \B_\delta$, 
  \begin{align}
    \E\left[(\lambda\cdot Z(\xi))^4\right]
    & \leq
    \tfrac{1}{2}n^2\delta^4+\tfrac{3}{4}n^4\delta^4+n^4\delta^4 \,,
    \nonumber \\
\intertext{and, because we always assume $n \geq 3$,}
    \E\left[(\lambda\cdot Z(\xi))^4\right] & \leq
    (\tfrac{1}{18}+\tfrac{3}{4}+1)(n\delta)^4
     \leq\tfrac{11}{6}(n\delta)^4 \,.
     \label{Eq:ELZ4}
  \end{align}
  Consequently, the inequalities \eqref{Eq:ImPsi2} and 
  \eqref{Eq:RePsi2} imply
  \begin{align}
    \Bigl|\Im(\psi(\lambda)) +
      \sum_{i<j<k}\lambda_{\{i,j\}}\lambda_{\{j,k\}}\lambda_{\{i,k\}}
      \Bigr| 
      & \leq \tfrac{1}{12}(n\delta)^4 \,, \nonumber \\ 
        \Bigl|\Re(\psi(\lambda))-\bigl(1 -\tfrac{1}{2}\|
          \lambda\|^2\bigr)\Bigr| 
      & \leq \tfrac{1}{12}(n\delta)^4 \,. \label{Eq:RePsi3}
  \end{align}
  This last inequality gives us the estimate on $\Im(\psi(\lambda))$
  claimed in \eqref{ImPsi}.   

  We now use \eqref{Eq:RePsi3} to prove the estimate on
  $\Re(\psi(\lambda))$ stated in \eqref{RePsi}.
  Because $n \geq 3$, we have
  $\|\lambda\|^4\leq(\tfrac{1}{2}n^2\delta^2)^2\leq\tfrac{1}{36}(n\delta)^4$.
  Therefore, the equations \eqref{Eq:RePsi3a} and \eqref{Eq:ELZ4} 
  imply
  \begin{align*}
    \Bigl|\Re(\psi(\lambda))-e^{-\frac{1}{2}\|\lambda\|^2}\Bigr| \leq
    \tfrac{1}{216}(n\delta)^4 + \tfrac{11}{144}(n\delta)^4
    <\tfrac{1}{12}(n\delta)^4\,.
  \end{align*}
  Therefore, since
  $e^{-\frac{1}{2}\|\lambda\|^2}\geq e^{-\frac{1}{2}n^2\delta^2}$
  for $\lambda\in \B_\delta$,
  it follows that
  \begin{equation*}
    \Bigl|\frac{\Re(\psi(\lambda))}{e^{-\frac{1}{2}\|\lambda\|^2}}-1\Bigr|
    \leq
    \frac{\tfrac{1}{12}(n\delta)^4}{e^{-\frac{1}{2}\|\lambda\|^2}}\\
    <\frac{\tfrac{1}{12}(n\delta)^4}{e^{-\frac{1}{2}n^2\delta^2}}\,.
  \end{equation*}
  Therefore,
  \begin{equation*}
    \Re(\psi(\lambda))=e^{-\frac{1}{2}\|\lambda\|^2}
    \left[\frac{\Re(\psi(\lambda))}{e^{-\frac{1}{2}\|\lambda\|^2}}\right]
    =e^{-\frac{1}{2}\|\lambda\|^2}(1+\epsilon_1(\lambda))\,,
  \end{equation*}
  where
  \begin{equation*}
    \bigl|\epsilon_1(\lambda)\bigr|
    =
    \left|\frac{\Re(\psi(\lambda))}{e^{-\frac{1}{2}\|\lambda\|^2}}-1\right|
    \leq 
    \tfrac{1}{12}(n\delta)^4e^{\frac{1}{2}n^2\delta^2}\,.
  \end{equation*}
\end{proof}
We now bound the contribution of the secondary region to the integral
$I(d,t)$.
\begin{proposition}\label{Prop:ResIntBnd} 
  \begin{equation*}
    \left| (2\pi)^{-d} \int_{R_\delta} \psi(\lambda)^t d\lambda
      \right| \leq e^{-\frac{11}{24} t\delta^2} \,.
  \end{equation*}
\end{proposition}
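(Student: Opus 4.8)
The plan is to establish the pointwise bound $|\psi(\gamma)|\le\cos\delta$ for every $\gamma\in R_\delta$ and then integrate it trivially over $R_\delta\subseteq\B_\pi$. The starting point is the decomposition of $R_\delta$ in Lemma~\ref{Lem:RDecomp}, which exhibits $R_\delta$ as a disjoint union of ``even'' pieces $\B_{\pi/4}(\lambda)\setminus\B_\delta(\lambda)$ with $\lambda\in\Lambda_1+\Lambda_2^\even=\Lambda$ and ``odd'' pieces $\B_{\pi/4}(\lambda)$ with $\lambda\in\Lambda_1+\Lambda_2^\odd$. On each piece I would first strip off a unimodular factor supplied by $\Lambda$ using Lemma~\ref{Lem:LambdaFacts}\eqref{Part:Psilambda2} (for $\lambda\in\Lambda$, $\psi(\lambda+\gamma)=\psi(\lambda)\psi(\gamma)$ and $|\psi(\lambda)|=1$), reducing the problem to bounding $|\psi|$ on a box of half-width $\pi/4$ centered at $0$ or at an element of $\Lambda_2^\odd$, and then apply the universal magnitude bound \eqref{Eq:MagnPsiUnivUB}.

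For an even piece, write $\gamma=\lambda+\mu$ with $\lambda\in\Lambda$ and $\mu\in\B_{\pi/4}\setminus\B_\delta$, so that $|\psi(\gamma)|=|\psi(\mu)|$. Since $\mu\notin\B_\delta$, some coordinate $\mu_{\{i_0,k_0\}}$ satisfies $\delta<|\mu_{\{i_0,k_0\}}|\le\pi/4$; applying \eqref{Eq:MagnPsiUnivUB} with $k=k_0$, every factor $\cos(2\mu_{\{i,k_0\}})$ lies in $[0,1]$ and the one with $i=i_0$ is at most $\cos 2\delta$, so $|\psi(\mu)|^2\le\tfrac12+\tfrac12\cos 2\delta=\cos^2\delta$. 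For an odd piece, write $\lambda=\lambda^{(1)}+\lambda^{(2)}$ with $\lambda^{(1)}\in\Lambda_1\subseteq\Lambda$ and $\lambda^{(2)}\in\Lambda_2^\odd$, so $\gamma=\lambda^{(1)}+\nu$ with $\nu=\lambda^{(2)}+\mu$ and $\mu\in\B_{\pi/4}$, whence $|\psi(\gamma)|=|\psi(\nu)|$. Choosing a vertex $k$ of odd degree in $G_{\lambda^{(2)}}$ and applying \eqref{Eq:MagnPsiUnivUB} with this $k$: for $i\ne k$ one has $\cos(2\nu_{\{i,k\}})=\cos(2\mu_{\{i,k\}})\ge0$ when $\lambda^{(2)}_{\{i,k\}}=0$ and $\cos(2\nu_{\{i,k\}})=-\cos(2\mu_{\{i,k\}})\le0$ when $\lambda^{(2)}_{\{i,k\}}=\pi/2$; since the number of indices of the second kind equals the degree of $k$, which is odd, the product $\prod_{i\ne k}\cos(2\nu_{\{i,k\}})$ is $\le0$, so $|\psi(\nu)|^2\le\tfrac12=\cos^2(\pi/4)\le\cos^2\delta$ because $\delta<\pi/4$. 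In both cases $|\psi(\gamma)|\le\cos\delta$.

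To conclude, since $|R_\delta|\le|\B_\pi|=(2\pi)^d$ and $\cos\delta\le e^{-\delta^2/2}$ on $(0,\pi/2)$, we obtain $\bigl|\int_{R_\delta}\psi(\lambda)^t\,d\lambda\bigr|\le(2\pi)^d(\cos\delta)^t\le(2\pi)^d e^{-t\delta^2/2}$, and dividing by $(2\pi)^d$ and using $\tfrac12\ge\tfrac{11}{24}$ yields the claim (in fact with the better constant $\tfrac12$). I expect the only delicate point to be the odd-piece estimate: there the decay comes not from $\delta$ but from a parity argument — an odd-degree vertex of $G_{\lambda^{(2)}}$ forces an odd number of sign reversals among the cosine factors of \eqref{Eq:MagnPsiUnivUB}, collapsing the bound to $\tfrac12$. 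The even-piece bound and the final assembly are routine once the reduction via Lemma~\ref{Lem:LambdaFacts}\eqref{Part:Psilambda2} and the decomposition of Lemma~\ref{Lem:RDecomp} are set up.
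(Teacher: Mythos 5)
Your proof is correct and follows essentially the same route as the paper: decompose $R_\delta$ via Lemma~\ref{Lem:RDecomp}, apply the magnitude bound \eqref{Eq:MagnPsiUnivUB} (choosing $k$ to be the index of the large coordinate in the even case and an odd-degree vertex in the odd case) to get $|\psi(\gamma)|\le\cos\delta$ pointwise, then integrate trivially over $R_\delta\subseteq\B_\pi$. The only deviation is that you invoke the clean inequality $\cos\delta\le e^{-\delta^2/2}$ (valid on $(0,\pi/2)$), which gives the sharper constant $\tfrac12$, whereas the paper uses a cruder Taylor estimate to land on $\tfrac{11}{24}$ — your version is if anything slightly tighter.
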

\begin{proof}
  By \eqref{Eq:MagnPsiUnivUB},
  for all $k\in\{1,2,\dots,n\}$,
  \begin{equation*}
    |\psi(\lambda)|^2
    \leq\tfrac{1}{2}+\tfrac{1}{2}\prod_{i \st i\not=k}\cos(2\lambda_{\{i,k\}}) \,.
  \end{equation*}
  Let $\gamma\in R_\delta$.  By Lemma \ref{Lem:RDecomp},
  either there is an element $\lambda \in \Lambda_1 + \Lambda_2^\even$
  such that $\gamma \in \B_{\pi/4}(\lambda) \setminus
  \B_\delta(\lambda)$, or there is an element $\lambda \in \Lambda_1
  + \Lambda_2^\odd$ such that $\gamma \in \B_{\pi/4}(\lambda)$.
%
%
  In the first case, $\gamma = \lambda^{(1)} + \lambda^{(2)} + \mu$,
  where $\mu \in \B_{\pi/4}\setminus\B_{\delta}$ and
  $\lambda^{(i)} \in \Lambda_i$ for $i=1,2$.
  Thus, there are $a,b \in \{0,1\}$ such that
  \begin{equation*}
    |\cos(2\gamma_{\{i,j\}})| =
    |\cos(2\pi a + \pi b + 2\mu_{\{i,j\}})|
    = |\cos(2\mu_{\{i,j\}})| \,.
  \end{equation*}
  Whence,
  \begin{equation*}
    |\psi(\gamma)|^2
    \leq\tfrac{1}{2}
    +|\tfrac{1}{2}\min_{\{i,j\}}\bigl\{\cos(2\gamma_{\{i,j\}})\bigr\}|
    \leq\tfrac{1}{2}+\tfrac{1}{2}\cos(2\delta) =\cos^2\delta\,.
  \end{equation*}
  In the second case, there is a choice of
  $k\in\{1,2,\dots,n\}$ such that an odd number of the components
  $\lambda_{\{i,k\}}$ ($i\not=k$) are $\tfrac{\pi}{2}$.  For this
  value of $k$, we have
  \begin{equation*}
    \prod_{i\not=k}\cos(2\gamma_{\{i,k\}}) \leq 0\,.
  \end{equation*}
  Therefore, in this case, we trivially have
  $|\psi(\gamma)|^2 \leq (1/2) \leq \cos^2\delta$, 
  since $\delta \leq \pi/4$. Therefore,
  \begin{equation*}
    |\psi(\gamma)|^2 \leq \cos^2\delta\qquad(\forall\ \gamma\in R_\delta).
  \end{equation*}

  Using the inequalities $\cos x \leq 1 - x^2/2 + x^4/24$ and
  $1-x \leq e^{-x}$ yields

  \begin{equation*}
    \cos \delta
    \leq 1-\frac{\delta^2}{2}+\frac{\delta^4}{24} 
    \leq e^{-\frac{\delta^2}{2}(1-\frac{\delta^2}{12})}
    \leq e^{-\frac{11}{24}\delta^2}\,.
  \end{equation*}
  Therefore, for all $\gamma\in R_\delta$, we have
  $|\psi(\gamma)^t| <e^{-\frac{11}{24}t\delta^2}$, and hence we
  certainly have
  \begin{equation*}
    (2\pi)^{-d}\left| \int_{R_\delta}\psi(\gamma)^t d\gamma \right|
    \leq
    (2\pi)^{-d}\int_{R_\delta}|\psi(\gamma)^t| d\gamma
    <e^{-\frac{11}{24}t\delta^2}\,.
  \end{equation*}
\end{proof}

\section{An Estimate for the Return Probabilities}
\label{Sec:EstProb}
We use our estimates obtained in the previous section for
$\psi(\lambda)$ to obtain upper and lower
bounds for the integral
\begin{equation}
  P_n^{(t)}(0,0)
  = \frac{1}{(2\pi)^{d}}\int_{\B_\pi}\psi(\lambda)^t d\lambda\,.
\end{equation}
Define
\begin{align}
  U(n,t,\delta) 
  &= \bigl[1+\tfrac{1}{9}(n\delta)^6\bigr]^{t/2}
  \bigl[1+\tfrac{1}{12}(n\delta)^4\bigr]^{t}
  \bigl[1-e^{-t\delta^2}\bigr]^{d/2}\,, \label{Eq:Udef1}\\
  L(n,t,\delta)
  & = \left[ 1 + \tfrac{4}{9} t^2 (n\delta)^6 \right]^{-\frac{1}{2}}
  \bigl[1-\tfrac{1}{12}(n\delta)^4\bigr]^{t}
   \left[1-e^{-\frac{1}{2} t\delta^2} \right]^{d/2} \label{Eq:Ldef1}\,.
\end{align}

\begin{align}
  U(n,4t,\delta) &=\bigl[1+\tfrac{1}{9}(n\delta)^6\bigr]^{2t}
  \bigl[1+\tfrac{1}{12}(n\delta)^4\bigr]^{4t}
  \bigl[1-e^{-4t\delta^2}\bigr]^{d/2}\,, \label{Eq:Udef}\\
  L(n,4t,\delta)
  &=\bigl[1+\tfrac{4}{9}(4t)^2(n\delta)^6\bigr]^{-\frac{1}{2}}
  \bigl[1-\tfrac{1}{12}(n\delta)^4\bigr]^{4t}
  \bigl[1-e^{-2t\delta^2}\bigr]^{d/2} \label{Eq:Ldef}\,.
\end{align}
\begin{theorem}\label{Thm:Bounds1}
  Suppose that $n\delta\in(0,1)$, and let $t$ be a positive integer.
  Let $U(n,4t,\delta)$ and $L(n,4t,\delta)$ be as defined
  in \eqref{Eq:Udef} and \eqref{Eq:Ldef}, respectively.
  Then
  \begin{equation} \label{Eq:P00UB}
    P_{n}^{(4t)}(0,0)
    \leq
    2^{2d-n+1}(8\pi t)^{-d/2}U(n,4t,\delta) + e^{-\frac{11}{6}t\delta^2}\,.
  \end{equation}
  Moreover, if $4t(n\delta)^3<1$, then
  \begin{equation}\label{Eq:P00LB}
    P_{n}^{(4t)}(0,0)
    \geq
    2^{2d-n+1}(8\pi t)^{-d/2}L(n,4t,\delta) - e^{-\frac{11}{6}t\delta^2}\,.
  \end{equation}
\end{theorem}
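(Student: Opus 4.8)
The plan is to combine Proposition~\ref{Prop:RetProbPrimSecond} with the pointwise estimates of Lemma~\ref{Lem:LemmaB}, to bound the secondary region by Proposition~\ref{Prop:ResIntBnd}, and then to evaluate the remaining truncated Gaussian integral. Since $n\delta<1$ forces $\delta<1/n\le1/3<\pi/4$, Proposition~\ref{Prop:RetProbPrimSecond} applies and yields $P_{n}^{(4t)}(0,0)=2^{2d-n+1}(2\pi)^{-d}\int_{\B_\delta}\psi(\lambda)^{4t}\,d\lambda+(2\pi)^{-d}\int_{R_\delta}\psi(\lambda)^{4t}\,d\lambda$. Applying Proposition~\ref{Prop:ResIntBnd} with $4t$ in place of $t$ bounds the second term in absolute value by $e^{-\frac{11}{24}(4t)\delta^2}=e^{-\frac{11}{6}t\delta^2}$, which is exactly the additive error term in \eqref{Eq:P00UB} and \eqref{Eq:P00LB}. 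It therefore remains to prove $(8\pi t)^{-d/2}L(n,4t,\delta)\le(2\pi)^{-d}\int_{\B_\delta}\psi(\lambda)^{4t}\,d\lambda\le(8\pi t)^{-d/2}U(n,4t,\delta)$.

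Next I would observe that $\psi(-\lambda)=\overline{\psi(\lambda)}$ and $\B_\delta=-\B_\delta$, so $\int_{\B_\delta}\psi^{4t}\,d\lambda$ is real and equals $\int_{\B_\delta}|\psi(\lambda)|^{4t}\cos(4t\arg\psi(\lambda))\,d\lambda$. From the Taylor expansions \eqref{RePsi}--\eqref{ImPsi} of Lemma~\ref{Lem:LemmaB}, together with $\binom{n}{3}\delta^3\le\tfrac16(n\delta)^3$, $\|\lambda\|^2\le d\delta^2\le\tfrac12(n\delta)^2<\tfrac12$, and $n\delta<1$, one extracts uniformly on $\B_\delta$ the magnitude bounds $e^{-\|\lambda\|^2}[1-\tfrac1{12}(n\delta)^4]^2\le|\psi(\lambda)|^2\le e^{-\|\lambda\|^2}[1+\tfrac1{12}(n\delta)^4]^2[1+\tfrac19(n\delta)^6]$ (the lower one simply dropping $\Im(\psi)^2\ge0$) and the phase bound $|\arg\psi(\lambda)|\le\tfrac{\sqrt2}{3}(n\delta)^3$. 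Raising the magnitude bounds to the power $2t$ produces the factors $[1\pm\tfrac1{12}(n\delta)^4]^{4t}$ and $[1+\tfrac19(n\delta)^6]^{2t}$; for the phase, $|\cos|\le1$ handles the upper estimate, while $\cos y\ge1-\tfrac12 y^2$ together with $(1+u)^{-1/2}\le1-u/4$ gives $\cos(4t\arg\psi(\lambda))\ge[1+\tfrac49(4t)^2(n\delta)^6]^{-1/2}>0$ precisely when $4t(n\delta)^3<1$ — this is where that extra hypothesis enters. Since $|\psi|^{4t}\ge0$ and this phase factor is a uniform positive constant, both inequalities reduce to estimating $(2\pi)^{-d}\int_{\B_\delta}e^{-2t\|\lambda\|^2}\,d\lambda$.

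This last integral is the conceptual core. It factors as $\bigl(\int_{-\delta}^{\delta}e^{-2tx^2}\,dx\bigr)^d$, and $\bigl(\int_{-\delta}^{\delta}e^{-2tx^2}\,dx\bigr)^2=\int_{[-\delta,\delta]^2}e^{-2t(x^2+y^2)}\,dx\,dy$ is squeezed between the integrals of $e^{-2t(x^2+y^2)}$ over the inscribed disk of radius $\delta$ and over the circumscribed disk of radius $\delta\sqrt2$; in polar coordinates these equal $\tfrac{\pi}{2t}(1-e^{-2t\delta^2})$ and $\tfrac{\pi}{2t}(1-e^{-4t\delta^2})$. Taking $d/2$ powers, multiplying by $(2\pi)^{-d}$, and using $(2\pi)^{-d}\bigl(\pi/(2t)\bigr)^{d/2}=(8\pi t)^{-d/2}$ gives $(8\pi t)^{-d/2}(1-e^{-2t\delta^2})^{d/2}\le(2\pi)^{-d}\int_{\B_\delta}e^{-2t\|\lambda\|^2}\,d\lambda\le(8\pi t)^{-d/2}(1-e^{-4t\delta^2})^{d/2}$, which supplies exactly the truncation factors $(1-e^{-2t\delta^2})^{d/2}$ in $L(n,4t,\delta)$ and $(1-e^{-4t\delta^2})^{d/2}$ in $U(n,4t,\delta)$. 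Combining this with the magnitude and phase estimates of the previous paragraph yields the two displayed inequalities, and adding back the residual bound completes the proof.

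The step I expect to require the most care is the bookkeeping in the second paragraph. Lemma~\ref{Lem:LemmaB} delivers only additive Taylor estimates, and the $\epsilon_1$-bound in \eqref{RePsi} carries an extra factor $e^{\frac12 n^2\delta^2}$; reorganizing $|\psi|^2=\Re(\psi)^2+\Im(\psi)^2$ into the clean multiplicative form above and pinning down the constants $\tfrac1{12}$, $\tfrac19$, and $\tfrac{\sqrt2}{3}$ requires absorbing all $e^{\|\lambda\|^2}$- and $e^{\frac12 n^2\delta^2}$-type corrections into these constants using $\|\lambda\|^2<\tfrac12$ and $n\ge3$. The reality of the primary integral, the polar-coordinate squeeze, and the final arithmetic assembling $U$ and $L$ are then routine.
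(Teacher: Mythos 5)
Your overall skeleton — invoke Proposition~\ref{Prop:RetProbPrimSecond} to isolate the primary-box integral, bound the residual via Proposition~\ref{Prop:ResIntBnd}, sandwich the remaining integral between Gaussian truncations via the inscribed/circumscribed-disk squeeze — is identical to the paper's. The genuine divergence is in the key step, where you must convert the pointwise Taylor estimates of Lemma~\ref{Lem:LemmaB} into bounds on $\Re\bigl(\psi(\lambda)^{4t}\bigr)$. The paper goes through $\Re(\psi)^{4t}$: it invokes Proposition~\ref{RealPartOfPowerProposition} from the appendix, whose proof organizes the binomial expansion of $z^{4t}$ into blocks of four consecutive terms and then applies a Neyman--Pearson-type ratio inequality (Lemma~\ref{SumsOfRatiosLemma}) to control the ratio $\Im(z^{4t})/\Re(z^{4t})$ in terms of $\Im(z)/\Re(z)$ and the quantity $\alpha = 1 - \binom{4t}{2}\beta^2$. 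You instead write $\Re\bigl(\psi^{4t}\bigr) = |\psi|^{4t}\cos(4t\arg\psi)$ and bound modulus and argument separately, which is more elementary and bypasses the appendix machinery entirely. The two routes produce exactly the same correction factors $[1 \pm \tfrac{1}{12}(n\delta)^4]^{4t}$, $[1 + \tfrac{1}{9}(n\delta)^6]^{2t}$, and $[1 + \tfrac{4}{9}(4t)^2(n\delta)^6]^{-1/2}$, and both use the hypothesis $4t(n\delta)^3 < 1$ at the same place (to keep $\alpha > 1/2$ for the paper; to keep $4t\arg\psi$ small enough that the cosine lower bound is positive for you), so the constants in $U$ and $L$ come out identical. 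What the appendix buys the paper is a sharper, exact identity \eqref{Eq:z4t} and a reusable combinatorial tool; what your polar route buys is brevity and fewer moving parts.

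One shared loose end you correctly flag: the bound on $\epsilon_1$ in \eqref{RePsi} carries the factor $e^{\frac{1}{2}n^2\delta^2}$, yet both the paper's proof and your write-up subsequently use $|\epsilon_1| < \tfrac{1}{12}(n\delta)^4$ without that factor. Under $n\delta < 1$ this extra factor is at most $e^{1/2}$ and can be absorbed by a mild adjustment of the constant $\tfrac{1}{12}$ (or of $\delta$), but strictly speaking the stated $U$ and $L$ would need a corresponding tweak; since you explicitly note this and it is the paper's own imprecision, it does not undermine your argument. Your constant $\tfrac{\sqrt{2}}{3}$ in the phase bound is looser than the $\tfrac{1}{3}(n\delta)^3$ that $|\arctan\beta| \le |\beta|$ actually gives, but it is still compatible with the required $\cos(4t\arg\psi) \geq [1 + \tfrac{4}{9}(4t)^2(n\delta)^6]^{-1/2}$ under the stated hypothesis, so no harm done.
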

\begin{proof}
  Rearranging equation \eqref{Eq:IntConsol} we have
  \begin{equation*}
    P_n^{(4t)}(0,0)
    -2^{2d-n+1}(2\pi)^{-d}\int_{\B_\delta}\psi(\gamma)^{4t}d\gamma 
    =(2\pi)^{-d}\int_{R_\delta}\psi(\gamma)^{t}d\gamma\,;
  \end{equation*}
  By Proposition~\ref{Prop:ResIntBnd},
  \begin{equation*}
    \Bigl| 
    P_n^{(4t)}(0,0)
    -2^{2d-n+1}(2\pi)^{-d}\int_{\B_\delta}\psi(\gamma)^{4t}d\gamma 
    \Bigr|
    \leq (2\pi)^de^{-\frac{11}{6}t\delta^2}\,.
  \end{equation*}
  Thus, it is sufficient to prove that, for $t$ satisfying 
  the conditions of the theorem,
  \begin{equation} \label{Eq:Verify}
    (8\pi t)^{-\frac{d}{2}}L(n,4t,\delta)
    <
    (2\pi)^{-d}\int_{\B_\delta}\psi(\gamma)^{4t}d\gamma
    <
    (8\pi t)^{-\frac{d}{2}}U(n,4t,\delta)\,.
  \end{equation}
  First note that, since $\psi(-\gamma)$ is the complex conjugate of
  $\psi(\gamma)$,
  \begin{equation} \label{Eq:RealInt}
    \int_{\B_\delta}\psi(\gamma)^{4t}d\gamma
    =
    \int_{\B_\delta}\Re\bigl(\psi(\gamma)^{4t}\bigr) d\gamma\,.
  \end{equation}
  Therefore, we only need to understand the real part of the powers of
  $\psi$.  In this proof, we employ  
  Proposition~\ref{RealPartOfPowerProposition} to obtain upper
  and lower bounds on $\Re(z^k)$ in terms of $\Re(z)^{k}$. 
  The bounds are sharpest when the ratio
  $\beta(z) \deq \Im(z)/\Re(z)$ has small magnitude.  
  Lemma~\ref{Lem:LemmaB} implies that,
  for $\lambda\in\B_\delta$,
  \begin{align*}
    |\beta(\psi(\lambda))| &\leq \frac{\sum_{i<j<k} |\lambda_{\{i,j\}}
      \lambda_{\{j,k\}} \lambda_{\{k,i\}}|
      +\epsilon_2(\lambda)}{1-\tfrac{1}{2}\|\lambda\|^2}\\
    &\leq \frac {\tfrac{1}{6}(n\delta)^3+\tfrac{1}{12}(n\delta)^4}
    {1-\tfrac{1}{4}(n\delta)^2} \leq \tfrac{1}{3}(n\delta)^3\,.
  \end{align*}
  The inequality \eqref{Eq:UBRez4t} then implies
  that
  \begin{equation} \label{Eq:Lower1}
    \Re(\psi(\gamma)^{4t})
    \leq
    \Re(\psi(\gamma))^{4t}
    \Bigl[ 1+ \frac{(n\delta)^6}{9} \Bigr]^{2t} \,.
  \end{equation}
  Proposition~\ref{RealPartOfPowerProposition} also supplies a lower
  bound for $\Re(\psi(\gamma)^{4t})$.  However, this bound only holds
  for powers $4t$ which satisfy the condition
  $\alpha(z) \deq 1-\binom{4t}{2}\beta^2(z)>0$. 

  We write $\beta$ and $\alpha$ for $\beta(\psi(\gamma))$ and
  $\alpha(\psi(\gamma)) = 1 - \binom{4t}{2}\beta^2$,
  respectively.    Suppose that $4t < 3(n\delta)^{-3}$.
  Then $4t \beta < 1$, and so $\alpha > 1/2$.
  In particular, $\alpha > 0$, whence \eqref{Ineq: z4t} can
  be applied to obtain
  \begin{align*}
    \Re( \psi(\gamma)^{4t} )
    & \geq \Re( \psi(\gamma))^{4t}\Big[1 + \beta^2
      \Big]^{2t}\Big[1 + \Big(\frac{4t}{\alpha}\Big)^2\beta^2
        \Big]^{-1/2} \\
    & \geq \Re(\psi(\gamma))^{4t}\Big[1 +
      \Big(\frac{\beta}{\alpha}\Big)^2 \beta^2 \Big]^{-1/2} \,.
  \end{align*}
  Since $\alpha \geq 1/2$ and $\beta^2 \leq
  (n\delta)^3/3$, it follows that $\beta^2/\alpha^2 \leq
  (4/9)(n\delta)^6$ and thus
  \begin{equation*}
    \Re(\psi(\gamma)^{4t})
      \geq \Re(\psi(\gamma))^{4t}
        \left[ 1 + \tfrac{4}{9}(4t)^2
        (n\delta)^6 \right]^{-\frac{1}{2}}\,.
  \end{equation*}
  The above bound and \eqref{Eq:Lower1} imply that
  \begin{equation} 
  \begin{split} \label{Eq:PreBounds}
    \left[1+\tfrac{4}{9}(4t)^2(n\delta)^6\right]^{-\frac{1}{2}}
    &\int_{\B_\delta}\Re\bigl(\psi(\gamma)\bigr)^{4t} d\gamma\\
    &\leq
    \int_{\B_\delta}\Re\bigl(\psi(\gamma)^{4t}\bigr) d\gamma\\
    &\leq \left[ 1+\tfrac{1}{9}(n\delta)^6 \right]^{2t}
    \int_{\B_\delta}\Re\bigl(\psi(\gamma)\bigr)^{4t} d\gamma
  \end{split}
  \end{equation}

  We now turn to bounding $\int_{\B_\delta} \Re(\psi(\gamma))^{4t}d\gamma$.
%
  Equation \eqref{RePsi}
  of Lemma~\ref{Lem:LemmaB} implies that
  \begin{equation*}
    \psi(\gamma)=e^{\frac{1}{2}\|\gamma\|^2}[1+\epsilon_1(\gamma)]\,,
  \end{equation*}
  where $|\epsilon_1(\gamma)|<\tfrac{1}{12}(n\delta)^4$.  Notice that
  this estimate is ideal when we need an estimate for powers of
  $\psi(\gamma)$.  Moreover, the real part of $1+\epsilon_1(\gamma)$
  must lie between $1+\tfrac{1}{12}(n\delta)^4$ and
  $1-\tfrac{1}{12}(n\delta)^4$.  Therefore, we have
  \begin{equation} \label{Eq:PreBounds2}
    \bigl[1+\tfrac{1}{12}(n\delta)^4\bigr]^{4t}e^{-\frac{4t}{2}\|\gamma\|^2}
    \geq
    \Re(\psi(\lambda))^{4t}
    \geq
    \bigl[1-\tfrac{1}{12}(n\delta)^4\bigr]^{4t}e^{-\frac{4t}{2}\|\gamma\|^2}
    \,.
  \end{equation}

  If we let
  \begin{equation*} 
    J(d,t,\delta)
    \deq \int_{\B_\delta}
    e^{-\frac{t}{2}\sum_{j<k}\gamma_{\{j,k\}}^2}\ d\gamma\,,
  \end{equation*}
  then \eqref{Eq:PreBounds} and \eqref{Eq:PreBounds2} 
  imply that
  \begin{equation} \begin{split} \label{Eq:PreBounds4}
    \Bigl[1 +  \tfrac{4}{9}(4t)^2 & (n\delta)^6\Bigr]^{-1/2}
    \left[1 - \tfrac{1}{12}(n\delta)^{4}\right]^{4t}
    J(d,4t,\delta) \\
    & \leq \int_{\B_\delta} \Re(\psi(\gamma))^{4t}
    d\gamma \\
    & \leq \left[1 + \tfrac{1}{9}(n\delta)^6\right]^{2t}
    \left[1 + \tfrac{1}{12}(n\delta)^4\right]^{4t}
    J(d,4t,\delta) \,.
  \end{split} \end{equation}

  To complete the proof, it remains to obtain suitable
  bounds for the integral $J(d,n,t)$.
  Changing variables by letting
  $\mu_{\{j,k\}}=\gamma_{\{j,k\}}\sqrt{t}$
  yields
  \begin{equation*}
    J(d,t,\delta)
    =t^{-d/2}\int_{\B_{\delta\sqrt{t}}}
    e^{-\frac{1}{2}\sum_{j<k}\mu_{\{j,k\}}^2}\ d\mu\,.
  \end{equation*}
  Now, for all $\rho>0$, we have
  \begin{equation*}
    \int_{0}^{\rho^2}2\pi r e^{-\frac{1}{2}r^2}\ d r
    <
    \int_{[-\rho,\;\rho\,]^2}
    e^{-\frac{1}{2}(\alpha^2+\beta^2)}\ d\alpha\; d\beta
    < 
    \int_{0}^{2\rho^2}2\pi r e^{-\frac{1}{2}r^2}\ d r\,.
  \end{equation*}
  So
  \begin{equation*}
    \sqrt{2\pi\left(1-e^{-\rho^2/2}\right)}
    <
    \int_{[-\rho,\,\rho]}e^{-\mu_{\{j,k\}}^2/2}d\mu_{\{j,k\}}
    <
    \sqrt{2\pi\left(1-e^{-\rho^2}\right)}\,.
  \end{equation*}
  Therefore
  \begin{multline*}
    \Big(\frac{2\pi}{t}\Big)^{d/2}\Big(1-e^{-t\delta^2/2}\Big)^{d/2}
    < J(d,t,\delta)
    <\Big(\frac{2\pi}{t}\Big)^{d/2}\Big(1-e^{-t\delta^2}\Big)^{d/2}\,.
  \end{multline*}
  Combining this with \eqref{Eq:PreBounds4},
  and using \eqref{Eq:RealInt},
  establishes \eqref{Eq:Verify}, completing the proof. 
\end{proof}

We can now derive the asymptotic formula for $N_{n,4t}$ 
in Theorem~\ref{Thm:AsymFormula}.
\begin{proof}[Proof of Theorem \ref{Thm:AsymFormula}]
  Fix $n$ and let $\delta = t^{-5/12}$.  Then
  $4t(n\delta)^3 = 4n^3 t^{-1/4}$, which for large enough $t$
  is less than $1$, so the bound \eqref{Eq:P00LB}
  can be used.  Note that $t^2 \delta^6 = t^{-1/2}$,
  whence $[1 + (4/9)(4t)^2(n\delta)^6]^{-1/2} \to 1$
  as $t \to \infty$.  Also, for any constant $c_n$, 
  \begin{equation*}
    \left( 1 - c_n t^{-5/3}\right)^t
    = e^{-c_n t^{-2/3}}[1 + o(1)]
    \to 1 \quad \text{as } t \to \infty \,,
  \end{equation*}
  whence $[1 - (1/12)(n\delta)^4]^{4t} \to 1$ as $t \to \infty$.
  Finally, $t\delta^2 = t^{1/6}$, so
  $[1-e^{-2t\delta^2}]^{d/2} \to 1$ as $t \to \infty$.
  We conclude that $L(n,4t,t^{-5/12}) \to 1$.
  This together with \eqref{Eq:P00LB} implies that
  \begin{multline} \label{Eq:LILB}
    \liminf_{t \to \infty}
    \frac{P_n^{4t}(0,0)}{2^{2d-n+1} (8\pi t)^{-d/2}} \\
    \geq \lim_{t \to \infty} \Big[L(n,4t,t^{-5/12}) 
      - \frac{e^{-\frac{11}{6}t^{1/6}}}{2^{2d-n+1} (8\pi t)^{-d/2}} \Big]
    = 1 \,.
  \end{multline}
  Similarly, $U(n,4t,t^{-5/12}) \to 1$, which with
  \eqref{Eq:P00UB} implies that
  \begin{multline} \label{Eq:LSUB}
    \limsup_{t \to \infty}
    \frac{P_n^{4t}(0,0)}{2^{2d-n+1} (8\pi t)^{-d/2}} \\
    \leq \lim_{t \to \infty} \Big[U(n,4t,t^{-5/12}) 
      - \frac{e^{-\frac{11}{6}t^{1/6}}}{2^{2d-n+1} (8\pi t)^{-d/2}} \Big]
    = 1 \,.
  \end{multline}
  The inequalities \eqref{Eq:LILB} and \eqref{Eq:LSUB},
  with the identity $N_{n,4t} = 2^{nt} P_n^{4t}(0,0)$, prove
 \eqref{Eq:AsymptoticFormula}.
\end{proof}

\section{Implications for the Existence and Abundance of Partial
  Hadamard Matrices} 
\label{Sec:ExistAndCount}
In this section, we show how our upper and lower
bounds for the integral $I(d,t)$ imply statements about the existence and
abundance of partial Hadamard matrices.  We will answer the following
questions:
\begin{itemize}
\item When does the upper bound \eqref{Eq:P00UB} imply a
  non-trivial bound on the number $N_{n,t}$ of partial Hadamard
  matrices?
\item When does the lower bound \eqref{Eq:P00LB} exceed zero, and
  hence imply that there is an $n\times t$ partial Hadamard matrix?
\end{itemize}

Let $N_{n,t}$ denote the number of $n\times t$ partial Hadamard
matrices, and let
\begin{align*}
  A(n,t)& \deq 2^{2d-n+1}(2\pi t)^{-d/2}\,,\\
  R(n,t)& \deq N_{n,t}/2^{nt}A(n,t)\,.
\end{align*}
Recalling the definitions in \eqref{Eq:Udef} and \eqref{Eq:Ldef},
let $U(n,t)$ and $L(n,t)$ be defined as 
\begin{align*}
  U(n,t) &\deq \min_{\delta<n^{-1}}\Bigl\{ U(n,t,\delta) +
  A(n,t)^{-1}e^{-\frac{11}{24}t\delta^2}
  \Bigr\}\,,\\
  L(n,t) &\deq \max_{\delta<n^{-1}}\Bigl\{ L(n,t,\delta) -
    A(n,t)^{-1}e^{-\frac{11}{24}t\delta^2} \Bigr\}\,.
\end{align*}
By Theorem~\ref{Thm:Bounds1},
\begin{equation*}
  L(n,4t)\ \leq\ R(n,4t)\ \leq\ U(n,4t) \,.
\end{equation*}

\begin{theorem} \label{Thm:NAbundExist}
  \hspace{0.1in}

  \begin{enumeratei}
  \item  \label{It:Abundance} {\bf Abundance of Designs}. 
    For all sufficiently large $n$, and $t>n^8$, 
    \begin{equation*}
      R(n,t) \leq e^{n^4 t^{-\frac{1}{2}}}
      +t^{\frac{d}{2}}e^{-\frac{11}{24}t^{\frac{1}{4}}} \,. 
    \end{equation*}
  \item \label{It:Existence} {\bf Existence of Designs}. 
    For all $\alpha,\beta>0$,
    and $n$ sufficiently large, we have
    \begin{equation*}
    L(n,t=n^{12+3\beta+2\alpha})
    > e^{-\frac{1}{4}n^{-2\alpha}}
    +A(n,t)^{-1}e^{-\frac{11}{24}n^{2+\beta}}\,.   
    \end{equation*}
    For all sufficiently large $n$, there is an $n\times 4t$ partial
    Hadamard matrix for all $t>n^{12}$.
  \end{enumeratei}
\end{theorem}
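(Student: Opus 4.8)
The plan is to obtain both parts from the chain $L(n,4t)\le R(n,4t)\le U(n,4t)$ by committing, in each case, to a single well-chosen value of the free parameter $\delta$; everything then reduces to the elementary inequalities $1+x\le e^{x}$, $(1-x)^{m}\ge 1-mx$, $e^{-x}\le 1-x+\tfrac12 x^{2}$, together with $A(n,t)^{-1}=2^{-(n-1)^{2}}(2\pi t)^{d/2}\le t^{d/2}$ (valid for $n\ge 3$), which lets us replace $A(n,t)^{-1}$ by $t^{d/2}$ in the residual terms. Since $R(n,t)=0$ whenever $4\nmid t$, only multiples of $4$ need attention.

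For the abundance bound I would take $\delta=t^{-3/8}$ when $t>n^{8}$. Then $n\delta=nt^{-3/8}<n^{-1}$, so $\delta$ is admissible in the minimum defining $U(n,t)$, and $t\delta^{2}=t^{1/4}$, $(n\delta)^{4}=n^{4}t^{-3/2}$, $(n\delta)^{6}=n^{6}t^{-9/4}$. Feeding these into \eqref{Eq:Udef1}, applying $1+x\le e^{x}$ factorwise, and bounding $[1-e^{-t\delta^{2}}]^{d/2}\le 1$ gives $U(n,t,\delta)\le\exp\!\bigl(\tfrac{1}{18}n^{6}t^{-5/4}+\tfrac{1}{12}n^{4}t^{-1/2}\bigr)\le e^{n^{4}t^{-1/2}}$, the last step because $t>n^{8}$ forces $n^{6}t^{-5/4}\le n^{-4}$, a lower-order term. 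The residual adds at most $A(n,t)^{-1}e^{-\frac{11}{24}t\delta^{2}}\le t^{d/2}e^{-\frac{11}{24}t^{1/4}}$, and summing the two and using $R(n,t)\le U(n,t)$ gives exactly the claimed estimate. There is no tension in this choice of $\delta$, so this part is routine.

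For the displayed inequality of the existence part, given $\alpha,\beta>0$ I would set $t=n^{12+3\beta+2\alpha}$ and $\delta=n^{-5-\beta-\alpha}$. This is tuned so that $t\delta^{2}=n^{2+\beta}$ — making the subtracted residual $A(n,t)^{-1}e^{-\frac{11}{24}t\delta^{2}}$ exactly the target term $A(n,t)^{-1}e^{-\frac{11}{24}n^{2+\beta}}$ — and $t(n\delta)^{3}=n^{-\alpha}<1$, so that the lower bound \eqref{Eq:P00LB} genuinely applies. Then $t^{2}(n\delta)^{6}=(t(n\delta)^{3})^{2}=n^{-2\alpha}$, so the first factor of \eqref{Eq:Ldef1} satisfies $[1+\tfrac49 n^{-2\alpha}]^{-1/2}\ge 1-\tfrac29 n^{-2\alpha}$; since $t(n\delta)^{4}=n^{-4-\beta-2\alpha}$ and $\tfrac12 t\delta^{2}=\tfrac12 n^{2+\beta}$, the other two factors of \eqref{Eq:Ldef1} are each $1-o(n^{-2\alpha})$, whence $L(n,t,\delta)\ge 1-\tfrac29 n^{-2\alpha}-o(n^{-2\alpha})$. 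Comparing with $e^{-\frac14 n^{-2\alpha}}\le 1-\tfrac14 n^{-2\alpha}+\tfrac{1}{32}n^{-4\alpha}$, the margin $(\tfrac14-\tfrac29)n^{-2\alpha}=\tfrac{1}{36}n^{-2\alpha}$ swallows the $o(n^{-2\alpha})$ losses and the (super-exponentially small) residual terms once $n$ is large relative to $\alpha,\beta$, so $L(n,t)\ge L(n,t,\delta)-A(n,t)^{-1}e^{-\frac{11}{24}n^{2+\beta}}>e^{-\frac14 n^{-2\alpha}}+A(n,t)^{-1}e^{-\frac{11}{24}n^{2+\beta}}$. The existence conclusion then drops out: this right-hand side is positive, so $L(n,t)>0$, so $R(n,t)>0$, so $N_{n,t}=2^{nt}A(n,t)R(n,t)>0$; and since $12+3\beta+2\alpha$ ranges over all of $(12,\infty)$, re-running the computation with $\delta=(n^{2+\beta}/t)^{1/2}$ for arbitrary $t>n^{12}$ (a suitably small $\beta$) shows $L(n,4t)>0$, hence that a design exists, for every $t>n^{12}$ once $n$ is large.

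The genuine obstacle is the competition of constraints in the existence part. The residual prefactor $A(n,t)^{-1}$ is exponentially large in $d\sim n^{2}/2$, which forces $t\delta^{2}$ to be at least of order $d\log t\sim n^{2}\log n$; but \eqref{Eq:P00LB} holds only when $t(n\delta)^{3}<1$. Reconciling $\delta\gtrsim n\,t^{-1/2}(\log n)^{1/2}$ with $\delta\lesssim n^{-1}t^{-1/3}$ forces $t\gtrsim n^{12}(\log n)^{3}$, which is precisely why the threshold sits near $n^{12}$ (and is why, to be scrupulous, I would state the existence range as $t\ge n^{12+\epsilon}$). A secondary but essential point is the numerical margin $\tfrac29<\tfrac14$: it is what makes the first factor of \eqref{Eq:Ldef1} cost less than $e^{-n^{-2\alpha}/4}$ to leading order, leaving room for the remaining error terms — a cruder estimate of $[1+\tfrac49 u]^{-1/2}$ would break the clean constant in the inequality.
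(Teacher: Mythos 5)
Your proposal is correct and, modulo presentation, takes the paper's route exactly: $\delta = t^{-3/8}$ for abundance (the paper arrives at this by writing $\delta = t^{-1/2+\epsilon}$ and optimizing to $\epsilon = 1/8$), and the same $(t,\delta)=(n^{12+3\beta+2\alpha},\, n^{-5-\beta-\alpha})$ pair for existence, with the same elementary bounds $1+x\le e^{x}$, $(1-x)^{m}\ge 1-mx$, and $A(n,t)^{-1}\le t^{d/2}$. Your explicit observation that $[1+\tfrac{4}{9}n^{-2\alpha}]^{-1/2}\ge 1-\tfrac{2}{9}n^{-2\alpha}$ and that the margin $\tfrac14-\tfrac29=\tfrac{1}{36}$ is exactly what makes the displayed inequality close is a useful sharpening of the paper's looser ``the middle term dominates'' remark.

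Your caveat about the theorem's final sentence is also well placed. The displayed inequality only covers the scales $t=n^{12+3\beta+2\alpha}$ with $\alpha,\beta>0$ fixed, and the threshold in $n$ a priori depends on $\alpha,\beta$. Tracing $t(n\delta)^{3}<1$ against the need for $t\delta^{2}\gtrsim n^{2}\log n$ (so that $A(n,t)^{-1}e^{-\frac{11}{24}t\delta^{2}}$ is small) forces, as you note, $t\gtrsim n^{12}(\log n)^{3}$ rather than $t>n^{12}$. The paper's proof of part (ii) stops immediately after the displayed inequality and supplies no additional uniformity argument, so the scrupulous statement is indeed $t\ge n^{12+\epsilon}$ --- which is what Theorem~\ref{Thm:P00b} in the introduction actually claims.
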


\begin{proof}
%
  We bound the function $U(n,t)$ by
  obtaining separate bounds for the logarithms of the two pieces:
  \begin{equation*}
    u_1(n,t,\delta) \deq A(n,t)^{-1}e^{-\frac{11}{24}t\delta^2}
    \quad \mbox{and}\quad
    U(n,t,\delta)\,.
  \end{equation*}
  For $U(n,t)$ to be small, the
  logarithm
  \begin{equation*}
     \log\left[ u_1(n,t,\delta) \right]=
    -\tfrac{11}{24}t\delta^2
    -(2d-n+1)\log2
    +\tfrac{d}{2}\log(2\pi)
    +\tfrac{d}{2}\log t
  \end{equation*}
  of the first piece must be negative and large in absolute value.  
  Therefore, for fixed $n$, as $t$
  grows the quantity $t\delta^2$ must grow.  So we put
  $\delta=t^{-\frac{1}{2}+\epsilon}$, where $\epsilon>0$ is small.
  Since we require $\delta<n^{-1}$, we must be sure that
  $t^{\frac{1}{2}-\epsilon}>n$.  In any case, setting
  $\delta=t^{-\frac{1}{2}+\epsilon}$, the expression for
  $\log(u_1(n,t,\delta))$ becomes
  \begin{equation*}
    f_{\epsilon}(t)=-\tfrac{11}{24}t^{2\epsilon}
    +\tfrac{d}{2}\log t
    -(2d-n+1)\log2
    +\tfrac{d}{2}\log(2\pi)\,.
  \end{equation*}
  Notice that
  \begin{equation*}
    -(2d-n+1)\log2 +\tfrac{d}{2}\log(2\pi)<0\,.
  \end{equation*}
  Therefore
  \begin{equation} \label{Eq:Sharp} 
    f_{\epsilon}(t)\leq-\tfrac{11}{24}t^{2\epsilon}
    +\tfrac{d}{2}\log t := g_\ep(t) \,.
  \end{equation}
  For $\epsilon>0$, the function $g_{\epsilon}(t)$ attains its maximum
  \begin{equation*}
    m(\epsilon)= \tfrac{1}{4}d\epsilon^{-1}  \left[ \log
    \left(\tfrac{6}{11}d\epsilon^{-1} \right) -1 \right]
  \end{equation*}
  at the point 
  $t_0(\epsilon)=\left(\tfrac{6}{11}d\epsilon^{-1} \right)^{1/2\epsilon}$.
  In particular,
  $t_0(1/8) = (\tfrac{48}{11}d)^4$
  and $m(1/8) = 2d\log\left(\tfrac{48}{11}de^{-1}\right)$.
  Note that $g_{1/8}(n^9)$ is approximately $-n^{9/4}$,
%
%
  so, once $t$ exceeds $n^8$, the function $f_\epsilon(t)$
  rapidly approaches zero.  In any case, putting $\epsilon=1/8$ in
  \eqref{Eq:Sharp}, yields the bound
  \begin{equation}\label{UpperFirstPartBound}
  u_1(n,t,t^{-\frac{3}{8}})
  \leq t^{\frac{d}{2}}e^{-\tfrac{11}{24}t^{\frac{1}{4}}}\,.
  \end{equation}

  We now examine the behavior of the second piece
  \begin{equation*}
    U(n,t,\delta) =\bigl[1+\tfrac{1}{9}(n\delta)^6\bigr]^{t/2}
  \bigl[1+\tfrac{1}{12}(n\delta)^4\bigr]^{t}
  \bigl[1-e^{-t\delta^2}\bigr]^{d/2} \,.
  \end{equation*}
  Since $1 + x \leq e^x$, we
  have
  \begin{equation*}
    \log U(n,t,\delta)
    \leq 
    \tfrac{1}{18} (n\delta)^6 t
    +\tfrac{1}{12} (n\delta)^4 t
    -\tfrac{1}{2} de^{-t\delta^2} \,.
  \end{equation*}
  Therefore,
  \begin{equation*}
    \log U(n,t,t^{-1/2+\epsilon})
    \leq 
    \tfrac{1}{18}n^6 t^{-2+6\epsilon}
    +\tfrac{1}{12}n^4 t^{-1+4\epsilon}
    -\tfrac{1}{2}de^{-t^{2\epsilon}}
    \,.
  \end{equation*}
  For $\epsilon<1/2$, and $n$ fixed, the middle term
  eventually dominates as $t$ grows.  For $\epsilon<1/4$,
  this term approaches zero.  Indeed, for $\epsilon=1/8$ and
  $t>n^8$, we have for $n$ sufficiently large
  \begin{equation*}
    U(n,t,t^{-\frac{3}{8}})
    \leq e^{n^4 t^{-\frac{1}{2}}}\,.
  \end{equation*}
  Combining this with \eqref{UpperFirstPartBound}, for $t>n^8$ and $n$
  sufficiently large, we have
  \begin{align*}
    U(n,t):=&\min_{\delta<n^{-1}}\{u_1(n,t,\delta)+U(n,t,\delta)\}\\
    &\leq u_1(n,t,t^{-\frac{3}{8}})+U(n,t,t^{-\frac{3}{8}})
    \leq e^{n^4 t^{-\frac{1}{2}}}
    +t^{\frac{d}{2}}e^{-\tfrac{11}{24}t^{\frac{1}{4}}}
    \,.
  \end{align*}
  This completes the proof of \eqref{It:Abundance}.
  
  We now prove \eqref{It:Existence}.
  Recall from \eqref{Eq:Ldef} that
  \begin{equation*}
  L(n,t,\delta)
   =\bigl[1+\tfrac{4}{9}t^2(n\delta)^6\bigr]^{-\frac{1}{2}}
    \bigl[1-\tfrac{1}{12}(n\delta)^4\bigr]^{t}
    \bigl[1-e^{-\frac{1}{2}t\delta^2}\bigr]^{d/2} \,.
  \end{equation*}
  For fixed $n$ and $t$, we determine when there
  exists $\delta\in[0,n^{-1}]$ such that
  \begin{equation}\label{logLLowerBound}
    \log L(n,t,\delta)
    > -\log A(n,t) -\tfrac{11}{24}t\delta^2\,,
  \end{equation}
  ensuring that $L(n,t)>0$. 
  Since $e^{-2x} \leq 1 - x$ for $0  \leq x \leq 1/2$, and, since $e^x>1+x$,
  \begin{equation*}
    \log L(n,t,\delta) \geq
    -\tfrac{2}{9} t^2 (n\delta)^6
    -\tfrac{1}{12}(n\delta)^4 t 
    - \tfrac{1}{4}d e^{-\frac{1}{2} t\delta^2} \,.
  \end{equation*}
  Thus \eqref{logLLowerBound} is satisfied if 
  the right-hand side above exceeds the right-hand side
  of \eqref{logLLowerBound}.  After rearranging, this
  is equivalent to
  \begin{multline*}
    t\bigl(\tfrac{11}{24}\delta^2 -\tfrac{1}{12}(n\delta)^4
    -\tfrac{2}{9}(n\delta)^6 t\bigr)\\
    \geq \tfrac{1}{2}d\log{t}+
    \tfrac{1}{2}d\bigl(\log(2\pi)+\tfrac{1}{2}e^{-\frac{1}{2}t\delta^2}\bigr)
    -(2d-n+1)\log(2)\,.
  \end{multline*}
  This inequality certainly holds if we drop the last two terms which
  contribute a comparatively small negative quantity as $t$ grows.
  Thus we are led to consider the simpler inequality
  \begin{equation} \label{Eq:Dagger} 
    t(\tfrac{11}{24}\delta^2
    -\tfrac{1}{12}(n\delta)^4
    -\tfrac{2}{9}(n\delta)^6 t)
    \geq
    \tfrac{1}{4}n^2\log t\,.
  \end{equation}
  This inequality presents three challenges which we need to
  overcome.  Firstly, we must ensure that the left-hand side is
  positive; so we must have
  \begin{equation} \label{Eq:Star} 
    \tfrac{11}{24}\delta^2
    \geq \tfrac{1}{12}(n\delta)^4
         +\tfrac{2}{9}(n\delta)^6 t\,.
  \end{equation}
  Secondly, we must find the smallest $t$ for a given $n$ for which
  the inequality has a feasible region for $\delta$.  Thirdly, we must
  ensure that the conditions 
  \begin{equation} \label{Eq:StarStar} 
    \delta<n^{-1}\qquad\mbox{and}\qquad t(n\delta)^3<1 \,,
  \end{equation}
  imposed by Theorem~\ref{Thm:Bounds1} hold.

  Before we begin, it is helpful to consider the following
  simplified version of \eqref{Eq:Dagger}:
  \begin{equation*}
    f_{a,b,c}(t)=\frac{t(a-bt)}{\log t}\geq c n^2\,,
  \end{equation*}
  where $a,b,c>0$.  For large $t$, the function $f_{a,b,c}(t)$ is
  essentially quadratic in $t$.  So ignoring the effect of the $\log
  t$ term, we should expect there to be a solution if $cn^2$ is less
  than the maximum $\tfrac{1}{4}a^2b^{-1}$ attained by the function
  $f_{a,b,c}$.  Moreover, the solution if it exists will lie in the
  interval $(0,a/b)$.  The corresponding maximum and interval for the
  inequality \eqref{Eq:Dagger} are (dropping the constant coefficients
  which are immaterial to this argument)
  \begin{equation*}
    (\delta^2(n\delta)^{-3}-(n\delta))^2\qquad\mbox{and}\qquad
    \Bigl(0,(n\delta)^{-2}(n^{-4}\delta^{-2}-1)\Bigr)\,.
  \end{equation*}
  Here we took $a=\delta^2-(n\delta)^4$ and $b=(n\delta)^6$.
  So, if there is a solution, we must have
  \begin{equation*}
    n<\delta^2(n\delta)^{-3}-(n\delta)<\delta^2(n\delta)^{-3}\,.
  \end{equation*}
  So, for $n$ large, we must have $\delta^{-1}>n^4$.  Putting
  $\delta=n^{-4-\epsilon}$, where $\epsilon>0$, in 
  \eqref{Eq:Dagger} yields
  the simplified inequality
  \begin{equation} \label{Eq:DaggerDagger} 
    \frac{t}{\log t}
    \geq
    n^{10+2\epsilon}
    \bigl(\tfrac{11}{6}
    -\tfrac{1}{3}n^{-4-2\epsilon}
    -\tfrac{8}{9}n^{-10-4\epsilon} t\bigr)^{-1}
    \,,
  \end{equation}
  where the roles of the various terms on the left-hand side of the
  original inequality \eqref{Eq:Dagger} are now clear.  In particular,
  we now see that for any $\epsilon>0$, as $n$ grows, there is a
  feasible region for $\delta$ when $t = n^{10+2\epsilon+\beta}$,
  where $\beta>0$, that the term $\tfrac{11}{24}t\delta^2$ is the
  important term, and that the term $\tfrac{2}{9}(n\delta)^6 t$
  presents no difficulty: i.e., the condition \eqref{Eq:Star} can be
  satisfied, provided that $\beta<2\epsilon$.  Indeed, since
  $\delta=n^{-4-\epsilon}$, the first part of condition
  \eqref{Eq:StarStar} is already satisfied.  However, the second part
  of \eqref{Eq:StarStar} requires that $t(n\delta)^3<1$, which holds
  if and only if $4n^{1-\epsilon+\beta}<3$.  Thus all conditions are
  satisfied for sufficiently large $n$ provided that $\beta>0$ and
  $\epsilon>1+\beta$.  Therefore, putting $\epsilon=1+\alpha+\beta$,
  we have
  \begin{equation*}
    t=n^{12+3\beta+2\alpha}
    \qquad \delta=n^{-5-\beta-\alpha}\qquad(\alpha,\beta>0)
  \end{equation*}
  and, for all sufficiently large $n$, these values for $t$ and $\delta$ 
  satisfy all conditions.  Feeding these parameters into the lower bound 
  \eqref{logLLowerBound} for $\log\bigl(L(n,t,\delta)\bigr)$ yields 
  the expression:
  \begin{equation*}
    -\tfrac{1}{12}(4n^{-4-\beta-2\alpha})
    -\tfrac{2}{9}n^{-2\alpha} 
    -\tfrac{1}{4}de^{-\frac{1}{2}n^{2+\beta}}\,.
  \end{equation*}
  The middle term dominates for large $n$; so for sufficiently large
  $n$,
  \begin{equation*}
    L(n,t=n^{12+3\beta+2\alpha},\delta=n^{-5-\beta-\alpha})
    >
    e^{-\frac{1}{4}n^{-2\alpha}}\,,
  \end{equation*}
  say, and, indeed,
  \begin{equation*}
    L(n,t=n^{12+3\beta+2\alpha})
    > e^{-\frac{1}{4}n^{-2\alpha}}
    +A(n,t)^{-1}e^{-\frac{11}{24}n^{2+\beta}}\,.   
  \end{equation*}
  This completes the proof of (ii).
\end{proof}

\section{The Branching Bound} \label{Sec:Branching}
In this section, we take advantage of the fact that the walk for
partial Hadamard matrices with $n$ rows contains, as projections, the
walks for the partial Hadamard matrices with fewer rows than $n$.  We
exploit this structure to obtain an upper bound on the number of
distinct $n\times 4t$ partial Hadamard matrices.  We call this the
{\em Branching Bound}.

The idea is that we can build up any $n\times t$ partial Hadamard
matrix by the searching a tree ${\mathcal T}$, say whose nodes at
level $m$ correspond to the $m\times t$ partial Hadamard matrices.
The parent of the node at level $m$ corresponding to the matrix $A$ is
the node at level $m-1$ corresponding to the partial Hadamard matrix
obtained by removing the last row of $A$.  If we choose a total order
on $\V_t$, then we can fully specify such a tree.  
Any total order on $\V_t$ imposes a total order on
the set of $m\times t$ partial Hadamard matrices:
matrix $A$ is greater than matrix $B$ if their first $j$ rows agree,
and the $(j+1)$-th row of matrix $A$ is greater than the $(j+1)$-th row
of $B$.  Then we may suppose the $i$-th node at level $m$ of 
${\mathcal T}$ corresponds to the $i$-th $m\times t$ partial Hadamard matrix.

The following lemma allows us to bound the number of nodes at level
$m+1$ in terms of the number of nodes at level $m$.

\begin{lemma} \label{Lem:Subspace} If $t \geq m$, then any
  $m$-dimensional real subspace of $\R^t$ contains at most $2^m$
  elements of $\V_t$.  Moreover, this bound can be attained for all
  $t\geq m$.
\end{lemma}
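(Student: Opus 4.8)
The plan is to prove that an $m$-dimensional subspace $W \subseteq \R^t$ contains at most $2^m$ vectors with $\pm 1$ entries, and that this is sharp. The key observation is that the $\pm 1$ vectors in $W$ form a very rigid set: if $v, v' \in W \cap \V_t$ are distinct, then they differ in the signs on some nonempty set of coordinates, and any $\pm 1$ vector is determined by its sign pattern. First I would argue by contradiction: suppose $W$ contains $2^m + 1$ vectors from $\V_t$. Since $\dim W = m$, among any $m+1$ of these vectors there is a nontrivial linear dependence; I want to leverage this to produce too many sign patterns forced to coincide.

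A cleaner route is the following. Pick $v_0 \in W \cap \V_t$ (if the intersection is empty there is nothing to prove). For any other $v \in W \cap \V_t$, form the Hadamard (entrywise) quotient $u = v_0^{-1} * v$, which again lies in $\V_t$ — here $v_0^{-1} = v_0$ since entries are $\pm1$, so $u = v_0 * v$ entrywise. This does not obviously preserve linearity, so instead I would work directly with a basis. Choose a basis $b_1, \dots, b_m$ of $W$. Every $v \in W$ is $v = \sum_{k=1}^m c_k b_k$ for unique reals $c_k$. The map $v \mapsto (c_1, \dots, c_m)$ is injective and linear. Now restrict to $v \in \V_t$: I claim the coordinate vector $(c_1,\dots,c_m)$ is determined by the values of $v$ on just $m$ suitably chosen coordinates. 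Indeed, choose coordinates $i_1 < \cdots < i_m$ such that the $m \times m$ submatrix of $[b_1 \cdots b_m]$ (rows $i_1,\dots,i_m$) is invertible — possible since the $b_k$ are independent, so the $t \times m$ matrix has rank $m$ and hence some $m \times m$ minor is nonzero. Then $(v_{i_1}, \dots, v_{i_m})$ determines $(c_1, \dots, c_m)$, which determines $v$. Since each $v_{i_j} \in \{-1, +1\}$, there are at most $2^m$ possible values of $(v_{i_1},\dots,v_{i_m})$, hence at most $2^m$ elements of $W \cap \V_t$.

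For sharpness, I would exhibit an $m$-dimensional subspace of $\R^t$ (with $t \geq m$) meeting $\V_t$ in exactly $2^m$ points. Take the subspace $W_0$ spanned by the standard-like vectors adapted to a block structure: partition the $t$ coordinates into $m$ nonempty blocks $B_1, \dots, B_m$ (possible since $t \geq m$), and let $b_k = \sum_{i \in B_k} e_i$ be the indicator of block $B_k$. These are independent, so $\dim W_0 = m$, and $v = \sum_k c_k b_k \in \V_t$ iff each $c_k \in \{-1, +1\}$; this gives exactly $2^m$ such vectors. Thus the bound is attained for every $t \geq m$.

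The only mild subtlety — and the step I would be most careful about — is the existence of the invertible $m \times m$ submatrix: this is just the statement that a $t \times m$ matrix of full column rank $m$ has an invertible $m \times m$ minor, which is standard linear algebra (the column space being $m$-dimensional forces some $m$ rows to be linearly independent). Everything else is a short counting argument. I do not anticipate a real obstacle here; the lemma is elementary, and the work in the rest of Section~\ref{Sec:Branching} will be in assembling these local bounds up the tree ${\mathcal T}$ to get the factor $2^{\binom{n+1}{2}}$ in Theorem~\ref{Thm:BranchingBound}.
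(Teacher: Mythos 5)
Your argument for the upper bound is essentially identical to the paper's: both select an invertible $m\times m$ submatrix of the $t\times m$ basis matrix (the paper phrases this as reordering rows and columns of the $t\times\ell$ matrix of the vectors themselves, but it is the same full-rank/minor argument), and then observe that the $\pm1$ entries in those $m$ rows determine the whole vector, giving at most $2^m$ possibilities. You also supply the block-indicator construction for the ``moreover'' (sharpness) clause, which the paper's proof states but does not explicitly verify, so your writeup is if anything slightly more complete.
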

\begin{proof}
  Let $c^{(1)}, \ldots, c^{(\ell)}\in \V_t$ be $\ell$ vectors lying in
  some $m$-dimensional real subspace.  Form the $t\times \ell$ matrix
  $C$ whose $i$-th column is $c^{(i)}$:
  \begin{equation*}
    C =
    \begin{bmatrix}
      c^{(1)} & c^{(2)} & \ldots & c^{(\ell)}
    \end{bmatrix}\,.
  \end{equation*}
  If needs be, we can re-order the columns of $C$ so that the first
  $m$ columns of $C$ are linearly independent.  (If not, then there is
  no set of $m$ linearly independent columns, and the vectors all lie
  in an $m-1$ dimensional subspace.)  Moreover, since the $t \times m$
  matrix comprised of the first $m$ columns of $C$ has rank $m$, we
  may re-order the rows of $C$ so that the $m \times m$ matrix $B$ in
  the upper-left corner of $C$ is invertible.

  Now let $s > m$, and let $b^{(s)}$ be the $m$-dimensional vector
  comprised of the first $m$ components of $c^{(s)}$.  Since $c^{(s)}$
  is a linear combination of the vectors $c^{(1)}, \ldots, c^{(m)}$,
  and since $B$ is invertible, there is a unique $m$-dimension real
  vector $a^{(s)}$ such that $b^{(s)} = B a^{(s)}$.  Indeed,
  $c^{(s)}=Ca^{(s)}$.  Notice that if $c^{(u)}$ is a different column
  of $C$ such that $b^{(u)} = b^{(s)}$, then $c^{(u)}=c^{(s)}$.
  Therefore, since there are at most $2^m$ choices for $b^{(s)}$, we
  see that $\ell\leq2^m$.
\end{proof}

We can now prove the following theorem:

\begin{theorem}\label{Thm:BranchStepBound}
  $P_{n}^{(4t)}(0,0)\leq 2^{n-1-4t}P_{n-1}^{(4t)}(0,0)$.
\end{theorem}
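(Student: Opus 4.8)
The plan is to make precise the remark that precedes the theorem --- that the $n$-row walk projects onto the $(n-1)$-row walk --- and then to control the $n-1$ extra coordinates by a single use of Lemma~\ref{Lem:Subspace}.

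First I would record the projection explicitly. Writing a generator $y\in\V_n$ as $(z,y_n)$ with $z=(y_1,\dots,y_{n-1})\in\V_{n-1}$ and grouping the coordinates of $Z(y)$ according to whether they involve the index $n$, one gets $Z(y)=(Z'(z),\,y_nz)$, where $Z'\colon\V_{n-1}\to\V_{\binom{n-1}{2}}$ is the map \eqref{Eq:ZDefn} for $n-1$ rows and the block $y_nz=(y_ny_1,\dots,y_ny_{n-1})$ holds the $n-1$ new coordinates. Since $Z(-y)=Z(y)$ we may use the representatives with $y_n=1$, so the $2^{n-1}$ increments of the $n$-walk are exactly the vectors $(Z'(z),z)$, $z\in\V_{n-1}$. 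Running the walk for $4t$ steps therefore amounts to drawing $z^{(1)},\dots,z^{(4t)}$ i.i.d.\ uniform on $\V_{n-1}$ and landing at $\bigl(\sum_kZ'(z^{(k)}),\ \sum_kz^{(k)}\bigr)$, which is the origin exactly when $\sum_kZ'(z^{(k)})=0$ \emph{and} $\sum_kz^{(k)}=0$.

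Next I would condition on the first of these events. By construction the partial sums $\sum_{i\le k}Z'(z^{(i)})$ form a copy of the $(n-1)$-walk, so $\sum_kZ'(z^{(k)})=0$ has probability $P_{n-1}^{(4t)}(0,0)$; and (conditioning a uniform law on a sub-event) given this event the sign matrix $B=[z^{(1)}\cdots z^{(4t)}]$ is uniform over the $(n-1)\times4t$ partial Hadamard matrices. Hence $P_n^{(4t)}(0,0)=P_{n-1}^{(4t)}(0,0)\cdot q$, where $q$ is the probability that a uniform such $B$ in addition has zero column sum, i.e.\ $B\one=0$ with $\one\in\V_{4t}$ the all-ones vector. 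To bound $q$ I would partition the partial Hadamard matrices into column-negation classes of size $2^{4t}$: in the class of a representative $B$, the matrix $B\,\diag(\epsilon)$ satisfies $(B\,\diag(\epsilon))\one=B\epsilon$, which is $0$ iff $\epsilon\in\ker B\cap\V_{4t}$; since the orthogonal rows of $B$ make it of full row rank, $\ker B$ is a subspace of $\R^{4t}$ of dimension $4t-(n-1)$, so Lemma~\ref{Lem:Subspace} bounds the number of admissible $\epsilon$ --- hence the number of matrices in the class with zero column sum --- by $2^{\,4t-(n-1)}$. Summing over the classes bounds $q$, and inserting this into $P_n^{(4t)}(0,0)=q\,P_{n-1}^{(4t)}(0,0)$, with the normalization $N_{m,s}=2^{ms}P_m^{(s)}(0,0)$ used to convert counts to probabilities, yields the claimed inequality. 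Equivalently, this is the branching-step count for the tree $\mathcal T$ of this section: an $(n-1)\times4t$ node has its children among the sign vectors in the $(4t-(n-1))$-dimensional orthogonal complement of its row space, hence at most $2^{\,4t-(n-1)}$ of them.

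The only step that is genuinely an argument rather than bookkeeping is the per-class count, and the delicate point there is the identification ``zero column sum $\iff\epsilon\in\ker B$'' together with $\dim\ker B=4t-(n-1)$ (automatic from full row rank), which is exactly what allows Lemma~\ref{Lem:Subspace} to be invoked with the right dimension. I would also dispose of the degenerate cases --- $4t<n$ or $P_{n-1}^{(4t)}(0,0)=0$ --- in which both sides vanish and there is nothing to prove.
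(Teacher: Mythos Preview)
Your argument is correct and is essentially the paper's own proof: both split off one index (you take the last, the paper the first), reduce the remaining $\binom{n-1}{2}$ coordinates to the $(n-1)$-walk, and bound the number of admissible sign vectors lying in the kernel of the resulting $(n-1)\times 4t$ partial Hadamard matrix by $2^{4t-(n-1)}$ via Lemma~\ref{Lem:Subspace}; your column-negation-class device is just a repackaging of the paper's inner sum over $a\in\V_{4t}$ in \eqref{Eq:Pn}. Note that both your computation and the paper's proof actually deliver the sharper factor $2^{1-n}$ (which is what is iterated in Corollary~\ref{Cor:BranchingBound}), not the $2^{\,n-1-4t}$ appearing in the displayed statement.
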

\begin{proof}
  For $z \in \V_{n-1}$, define
  \begin{equation*}
    Q(z) = (z_j z_k)_{1 \leq j < k \leq n} \,.
  \end{equation*}
  The quantity $\one\{P\}$ equals $1$ if property $P$ holds,
  and zero otherwise.
  The number of $n \times t$ partial Hadamard matrices is 
  exactly
  \begin{equation*}
    \sum_{y^{(1)} \in \V_n} \cdots \sum_{y^{(t)} \in \V_n}
       \one\Big\{ \sum_{s=1}^t y^{(s)}_iy^{(s)}_j = 0
      \quad \forall\; 1 \leq i < j \leq n \Big\}  \,.
  \end{equation*}
  Letting $a_s = y^{(s)}_1$ for $s=1,\ldots, t$,
  this equals
  \begin{multline*}
    \sum_{\substack{a_1 \in \{-1,1\}\\
        (y^{(1)}_2,\ldots,y^{(1)}_n) \in \V_{n-1}}}
    \!\! \cdots \!\!
    \sum_{\substack{a_t \in \{-1,1\}\\
        (y^{(t)}_2,\ldots,y^{(t)}_n) \in \V_{n-1}}}
      \one\Big\{ 
        \begin{array}{l} \sum_{s=1}^t y^{(s)}_i y^{(s)}_j = 0 \\
      \forall\; 2 \leq i < j \leq n  \end{array} \Big\} \\
      \times \one\Big\{ \begin{array}{l} \sum_{s=1}^t a_s y^{(s)}_j = 0\\
      \forall\; 2 \leq j \leq n \end{array} \Big\} \,,
  \end{multline*}
  and the letting $z^{(s)} = (y^{(s)}_2, \ldots, y^{(s)}_n)
  \in \V_{n-1}$, we obtain
  \begin{multline} \label{Eq:Pn}
    P_{n}^{(t)}(0,0) \\
    = 2^{-tn} \sum_{z^{(1)},\cdots, z^{(t)} \in
      \V_{n-1}} \Bigoneb{ \sum_{j=1}^t Q(z^{(j)}) = 0 } \sum_{a \in
      \V_{t}} \Bigoneb{ \sum_{j=1}^t a_j z^{(j)} = 0 } \,.
  \end{multline}

  Next, we apply Lemma \ref{Lem:Subspace}.
  If $\{z^{(1)},\ldots,z^{(t)}\}$ is a set of $t$ elements of
  $\V_{n-1}$ satisfying $\sum_{j=1}^t Q(z^{(j)}) = 0$, then the $(n-1)
  \times t$ matrix $Z_t = [z^{(1)}, \ldots, z^{(t)}]$ has rank $n-1$.
  So the solutions to $Z_t a = 0$ thus are contained in a $t - (n-1)$
  vector subspace of $\R^t$.  Then, by Lemma \ref{Lem:Subspace}, the
  set of $a \in \V_t$ satisfying $\sum_{j=1}^t a_j z^{(j)} = 0$ has
  cardinality at most $2^{t-n+1}$.

  Thus,
  \begin{align*}
    P_{n}^{(t)}(0,0) & \leq 2^{-n+1}\cdot 2^{-t(n-1)} \sum_{z^{(1)},
      \cdots, z^{(t)} \in \V_{n-1}}
    \Bigoneb{ \sum_{j=1}^t Q(z^{(j)}) = 0 }\\
    & = 2^{-(n-1)}\,P_{n-1}^{(t)}(0,0)\,.
  \end{align*}
\end{proof}
The following corollary is immediate:
\begin{corollary} \label{Cor:BranchingBound}
  For $4t\geq n\geq s\geq1$,
  \begin{equation*}
    P_{n}^{(4t)}(0,0)\leq 2^{-\binom{n}{2}+\binom{s}{2}}\,P_{s}^{(4t)}(0,0)\,.
  \end{equation*}
  In particular there are at most $2^{\binom{n+1}{2}}$ Hadamard
  matrices of order $n$.
\end{corollary}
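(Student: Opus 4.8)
The plan is to derive Corollary~\ref{Cor:BranchingBound} by iterating the single-step branching inequality of Theorem~\ref{Thm:BranchStepBound}, and then to extract the count of Hadamard matrices as the case $s=1$ together with the dictionary between return probabilities and matrix counts.

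\emph{Iterating the branching step.} Fix a positive integer $t$ with $4t\ge n$. For each integer $k$ with $s<k\le n$ we have $4t\ge n\ge k$, so Theorem~\ref{Thm:BranchStepBound}, in the form $P_k^{(4t)}(0,0)\le 2^{-(k-1)}P_{k-1}^{(4t)}(0,0)$ established in its proof, applies at $k$ rows. Composing these inequalities for $k=n,n-1,\dots,s+1$ yields
\begin{equation*}
  P_n^{(4t)}(0,0)\ \le\ 2^{-\sum_{k=s+1}^{n}(k-1)}\,P_s^{(4t)}(0,0).
\end{equation*}
Since $\sum_{k=s+1}^{n}(k-1)=\sum_{j=s}^{n-1}j=\binom{n}{2}-\binom{s}{2}$, this is exactly the asserted bound $P_n^{(4t)}(0,0)\le 2^{-\binom{n}{2}+\binom{s}{2}}P_s^{(4t)}(0,0)$. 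The only point to verify is that Theorem~\ref{Thm:BranchStepBound} is legitimately invoked at every intermediate row-count, which is immediate because that row-count runs through values $\le n\le 4t$.

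\emph{The Hadamard bound.} Specialize to $s=1$. A $1\times m$ matrix with $\pm1$ entries satisfies the (vacuous) orthogonality condition automatically, so $N_{1,m}=2^{m}$ and hence $P_1^{(m)}(0,0)=1$ for every $m$; therefore $P_n^{(4t)}(0,0)\le 2^{-\binom{n}{2}}$ whenever $4t\ge n$. A Hadamard matrix of order $n$ is an $n\times n$ partial Hadamard matrix, and for $n\ge 3$ these exist only when $4\mid n$; writing $n=4t$ gives $4t\ge n$ with equality and $n\ge 1$, so the previous inequality applies at $4t=n$. Using $N_{n,n}=2^{n^{2}}P_n^{(n)}(0,0)$ (the $t=n$ instance of $N_{n,t}=2^{nt}P_n^{(t)}(0,0)$ recorded in the Introduction) we obtain
\begin{equation*}
  N_{n,n}\ \le\ 2^{n^{2}}\cdot 2^{-\binom{n}{2}}\ =\ 2^{\,n^{2}-n(n-1)/2}\ =\ 2^{\binom{n+1}{2}}.
\end{equation*}
(The cases $n\le 2$, where the bound is attained, can be checked by hand.)

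\emph{Where the difficulty lies.} There is essentially none: the corollary is a telescoping consequence of the per-step estimate already proved. The handful of points that need a moment's attention are purely bookkeeping — confirming that the hypothesis $4t\ge(\text{number of rows})$ of Theorem~\ref{Thm:BranchStepBound} survives each reduction, collapsing the exponents via $\sum_{j=s}^{n-1}j=\binom{n}{2}-\binom{s}{2}$, evaluating $P_1^{(m)}(0,0)=1$, and converting the return-probability inequality into a matrix count through $N_{n,t}=2^{nt}P_n^{(t)}(0,0)$ and the identity $n^{2}-\binom{n}{2}=\binom{n+1}{2}$.
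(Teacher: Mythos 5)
Your proof is correct and is exactly the telescoping argument the paper intends when it calls the corollary ``immediate'' from Theorem~\ref{Thm:BranchStepBound}. You were also right to work with the per-step factor $2^{-(k-1)}$ that the proof of that theorem actually establishes (the factor $2^{n-1-4t}$ in its displayed statement is a typo, and iterating it would not reproduce the exponent $-\binom{n}{2}+\binom{s}{2}$).
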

This bound is clearly inexact.  Direct arguments prove that
\begin{equation*}
  P_{2}^{(4t)}(0,0)=2^{-4t}\binom{4t}{2}\,,
  \qquad \text{and} \qquad
  P_{3}^{(4t)}(0,0)=2^{-8t}(4t)!/(t!)^4\,.
\end{equation*}
By using Stirling's Formula (with error bounds) to approximate
the binomial coefficients above, one can see that the asymptotic
formula in Theorem~\ref{Thm:AsymFormula} is actually
very good.

\section{Conclusion}
We have introduced a random walk for each integer $n\geq3$ in which
the probability of returning to the start of the walk after $t$ steps
is proportional to the number of distinct $n\times t$ partial Hadamard
matrices.  The behavior of this walk when $t$ is close to $n$ is of
particular interest.  This paper contains a preliminary analysis of
this walk using Fourier theory on the $d$-dimensional integer lattice
(here $d=\binom{n}{2}$) which shows how the walk behaves for $t$
polynomial in $n$.  Consequently, we are able to estimate the number
of distinct $n\times t$ partial Hadamard matrices for $t>n^{12}$.
Even this preliminary analysis yields new facts about designs.

This paper has also completed an important first step in the standard
Fourier-theoretic approach to walks in a discrete lattice.  We have
been able to give a fairly complete description of the set of points
$\lambda\in \B_\pi$, where the characteristic function
$\psi(\lambda)$ has magnitude equal to one.  In our case, the set has
interesting combinatorial structure: for example, each point in the
set corresponds to a graph on $n$ vertices all of whose degrees are
even.  We have also obtained some estimates for the characteristic
function by methods which give us a glimpse of the underlying
combinatorial questions which will need to be studied in order to
obtain better more global estimates for the characteristic function.

Finally, we note that the walks discussed in this paper are just one
example of a walk corresponding to a familiar kind of combinatorial
design.  For example, we have carried out elsewhere 
most of the steps in this paper for the walks corresponding to
balanced incomplete block designs. 

\appendix
\section{Some Inequalities}
In this appendix, we record and prove
inequalities which relate $\Re(z)^t$ and
$\Re(z^t)$.

We employ the following version of the Neyman-Pearson Lemma:
\begin{lemma}\label{SumsOfRatiosLemma}
  Let $\lambda_0,\lambda_1,\dots,\lambda_n$ be positive real numbers,
  and let 
\[
A_0,A_1,\dots, A_n
\qquad\mbox{and}\qquad 
B_0,B_1,\dots, B_n
\] 
be non-negative real numbers.  Then
  \begin{equation*}
    \min_{0 \leq s \leq n} \Bigl\{\frac{B_s}{A_s}\Bigr\}
    \leq\frac{\sum_{s=0}^n\lambda_sB_s}{\sum_{s=0}^n\lambda_sA_s}
    \leq
    \max_{0 \leq s \leq n}\Bigl\{\frac{B_s}{A_s}\Bigr\}\,.
  \end{equation*}
\end{lemma}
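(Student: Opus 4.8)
The plan is to prove the two inequalities by the elementary ``mediant'' principle: a weighted average of the ratios $B_s/A_s$, taken with the positive weights $\lambda_s A_s$, lies between the smallest and the largest of those ratios. So first I would set $m \deq \min_{0 \le s \le n}\{B_s/A_s\}$ and $M \deq \max_{0 \le s \le n}\{B_s/A_s\}$, and dispose of the degenerate indices $s$ with $A_s = 0$. If $A_s = 0$ and $B_s = 0$, then the $s$-th term contributes nothing to either of the sums $\sum_s \lambda_s A_s$ and $\sum_s \lambda_s B_s$, so it may simply be dropped; if $A_s = 0$ and $B_s > 0$, then $M = +\infty$ so the upper bound is vacuous, while the lower bound is unaffected since $m A_s = 0 \le B_s$. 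Hence for both inequalities one may assume every $A_s > 0$, and then $\sum_s \lambda_s A_s > 0$ because each $\lambda_s > 0$.

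Next I would prove the upper bound. By the definition of $M$ we have $B_s \le M A_s$ for every $s$; multiplying this by $\lambda_s > 0$ and summing over $s$ gives $\sum_s \lambda_s B_s \le M \sum_s \lambda_s A_s$, and dividing through by the positive quantity $\sum_s \lambda_s A_s$ yields the right-hand inequality of the lemma. The lower bound is entirely symmetric: the definition of $m$ gives $B_s \ge m A_s$ for every $s$ (this even holds when $A_s = 0$, since $m \ge 0$), and the same multiply--sum--divide steps produce the left-hand inequality.

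There is no substantive obstacle here; the only point requiring any care is the bookkeeping for the degenerate values $A_s = 0$, which is handled as above. Alternatively, one may simply impose the (implicit) hypothesis that the ratios $B_s/A_s$ are all well defined, i.e.\ that $A_s > 0$ for every $s$, which is the case in which the lemma is subsequently applied, and then the three-line computation above is all that is needed.
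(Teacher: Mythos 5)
Your proof is correct and takes essentially the same route as the paper: the paper also rewrites the middle quantity by multiplying and dividing by $\sum_s \lambda_s A_s$ and then compares term by term with $B_s \le (\max_r B_r/A_r) A_s$ and $B_s \ge (\min_r B_r/A_r) A_s$, which is exactly your weighted-mediant argument. The only difference is that you spell out the bookkeeping for $A_s = 0$, which the paper glosses over.
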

\begin{proof}
  Let $s_0$ and $s_1$ satisfy
  \begin{equation*}
    \frac{B_{s_0}}{A_{s_0}}
    =\min_{0 \leq s \leq n}\Bigl\{\frac{B_s}{A_s}\Bigr\}
    \qquad\mbox{and}\qquad
    \frac{B_{s_1}}{A_{s_1}}
    =\max_{0 \leq s \leq n}\Bigl\{\frac{B_s}{A_s}\Bigr\}\,.
  \end{equation*}
  Then
  \begin{multline*}
    \frac{B_{s_0}}{A_{s_0}}
    = \frac{\sum_{0 \leq s \leq n}\lambda_sA_s(B_{s_0}/A_{s_0})}{\sum_{0 \leq s \leq n}\lambda_sA_s}
    \leq\frac{\sum_{0 \leq s \leq n}\lambda_sB_s}{\sum_{0 \leq s \leq
        n}\lambda_sA_s} \\
    \leq\frac{\sum_{0 \leq s \leq n}\lambda_sA_s(B_{s_1}/A_{s_1})}{\sum_{0 \leq s \leq n}\lambda_sA_s}
    = \frac{B_{s_1}}{A_{s_1}}\,.
  \end{multline*}
\end{proof}
We can now prove the required inequalities relating $\Re(z^t)$ to
$\Re(z)^t$.
\begin{proposition}\label{RealPartOfPowerProposition}
  Let $t>0$ be an integer.
  \begin{enumeratei}
  \item\label{RealPartOfPowerLemmaPart1} For any complex number we
    have
    \begin{equation}\label{Eq:z4t2}
      \Bigl\{\Re(z^{4t})
      \Bigl(
      1+\Bigl[\frac{\Im(z^{4t})}{\Re(z^{4t})}\Bigr]^2
      \Bigr)^{\frac{1}{2}}\Bigr\}^2
      =
      \Bigl\{
      \Bigl(
      1+\Bigl[\frac{\Im(z)}{\Re(z)}\Bigr]^2
      \Bigr)^{2t}
      \Re(z)^{4t}\Bigr\}^2\,.
    \end{equation}
    In particular,
    \begin{equation}\label{Eq:UBRez4t}
      \Re(z^{4t})
      \leq
      \Re(z)^{4t}
      \Bigl(
      1+\Bigl[\frac{\Im(z)}{\Re(z)}\Bigr]^2
      \Bigr)^{2t}\,.
    \end{equation}
  \item\label{RealPartOfPowerLemmaPart2} If
    $\alpha=1-\binom{4t}{2}\Bigl\{\frac{\Im(z)}{\Re(z)}\Bigr\}^{2}>0$,
    then
    \begin{equation}\label{Eq:z4t}
      \Re(z^{4t})
      =
      \Re(z)^{4t}
      \Bigl(
      1+\Bigl[\frac{\Im(z^{4t})}{\Re(z^{4t})}\Bigr]^2
      \Bigr)^{-\frac{1}{2}}
      \Bigl(1+\Bigl[\frac{\Im(z)}{\Re(z)}\Bigr]^2\Bigr)^{2t}
      \,.
    \end{equation}
  \item\label{RealPartOfPowerLemmaPart3} If
    $\alpha=1-\binom{4t}{2}\Bigl\{\frac{\Im(z)}{\Re(z)}\Bigr\}^{2}>0$,
    then
    \begin{equation}\label{Eq:ImzOverRez}
      \Bigl[\frac{\Im(z^{4t})}{\Re(z^{4t})}\Bigr]^2
      \leq
      \Bigl[\frac{4t}{\alpha}\Bigr]^2
      \Bigl[\frac{\Im(z)}{\Re(z)}\Bigr]^2\,.
    \end{equation}
  \item\label{RealPartOfPowerLemmaPart4} If
    $\alpha=1-\binom{4t}{2}\Bigl\{\frac{\Im(z)}{\Re(z)}\Bigr\}^{2}>0$,
    then
    \begin{equation}\label{Ineq: z4t}
      \Re(z^{4t})
      \geq
      \Re(z)^{4t}
      \Bigl(1+\Bigl[\frac{\Im(z)}{\Re(z)}\Bigr]^2\Bigr)^{2t}
      \Bigl(
      1+
      \Bigl[\frac{4t}{\alpha}\Bigr]^2
      \Bigl[\frac{\Im(z)}{\Re(z)}\Bigr]^2
      \Bigr)^{-\frac{1}{2}}
      \,.
    \end{equation}
  \end{enumeratei}
\end{proposition}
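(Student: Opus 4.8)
The plan is to reduce all four parts to statements about $w^{4t}$, where $w\deq 1+\i\beta$ and $\beta\deq\Im(z)/\Re(z)$. Since this quotient is implicit in the statement, we have $\Re(z)\neq0$, so $a\deq\Re(z)$ satisfies $a^{4t}=\Re(z)^{4t}>0$ (as $4t$ is even). Writing $z=aw$ gives $z^{4t}=a^{4t}w^{4t}$, hence $\Re(z^{4t})=a^{4t}\Re(w^{4t})$ and $\Im(z^{4t})/\Re(z^{4t})=\Im(w^{4t})/\Re(w^{4t})$; so each claim is equivalent to the same statement for $w$, with the factor $a^{4t}$ cancelling or being absorbed. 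Because $|w|^2=1+\beta^2$, we get $\Re(w^{4t})^2+\Im(w^{4t})^2=|w^{4t}|^2=(1+\beta^2)^{4t}$, i.e. $\Re(w^{4t})^2\bigl(1+[\Im(w^{4t})/\Re(w^{4t})]^2\bigr)=(1+\beta^2)^{4t}$. Multiplying by $a^{8t}$ and rearranging yields the identity \eqref{Eq:z4t2} of part (i), and \eqref{Eq:UBRez4t} follows from $\Re(w^{4t})\le|w^{4t}|=(1+\beta^2)^{2t}$ after multiplying by $a^{4t}>0$.

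For part (ii) I first show that the hypothesis $\alpha\deq 1-\binom{4t}{2}\beta^2>0$ forces $\Re(w^{4t})>0$ -- exactly what is needed to pass from the squared identity \eqref{Eq:z4t2} to the signed identity \eqref{Eq:z4t}. Expand $\Re(w^{4t})=\sum_{j=0}^{2t}\binom{4t}{2j}(-\beta^2)^j$; the summands alternate in sign, so pairing consecutive ones gives $\Re(w^{4t})=\sum_{k=0}^{t-1}\beta^{4k}(\binom{4t}{4k}-\binom{4t}{4k+2}\beta^2)+\beta^{4t}$. The elementary inequality $\binom{4t}{4k+2}/\binom{4t}{4k}=(4t-4k)(4t-4k-1)/[(4k+2)(4k+1)]\le 4t(4t-1)/2=\binom{4t}{2}$ shows each bracket is at least $\binom{4t}{4k}\alpha\ge 0$, so $\Re(w^{4t})\ge\alpha>0$. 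Taking nonnegative square roots in \eqref{Eq:z4t2} now yields \eqref{Eq:z4t}.

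Part (iii) is the heart of the matter (assume $\beta\neq0$, else \eqref{Eq:ImzOverRez} is trivial). Pairing the expansion $\Im(w^{4t})/\beta=\sum_{j=0}^{2t-1}\binom{4t}{2j+1}(-\beta^2)^j=\sum_{k=0}^{t-1}\beta^{4k}(\binom{4t}{4k+1}-\binom{4t}{4k+3}\beta^2)$ and using $\binom{4t}{4k+3}/\binom{4t}{4k+1}\le\binom{4t}{2}$ shows each bracket is nonnegative, so $\Im(w^{4t})$ has the sign of $\beta$; and since dropping the nonnegative $\beta^{4t}$ term only enlarges the bound,
\[
  \frac{|\Im(w^{4t})|}{|\beta|\,\Re(w^{4t})}\le\frac{\sum_{k}\beta^{4k}(\binom{4t}{4k+1}-\binom{4t}{4k+3}\beta^2)}{\sum_{k}\beta^{4k}(\binom{4t}{4k}-\binom{4t}{4k+2}\beta^2)}.
\]
Applying Lemma~\ref{SumsOfRatiosLemma} with weights $\beta^{4k}$ bounds the right-hand side by $\max_{0\le k\le t-1}(\binom{4t}{4k+1}-\binom{4t}{4k+3}\beta^2)/(\binom{4t}{4k}-\binom{4t}{4k+2}\beta^2)$, so \eqref{Eq:ImzOverRez} reduces to the pointwise estimate $(\binom{4t}{4k+1}-\binom{4t}{4k+3}\beta^2)/(\binom{4t}{4k}-\binom{4t}{4k+2}\beta^2)\le 4t/\alpha$, equivalently $(1-\binom{4t}{2}\beta^2)(\binom{4t}{4k+1}-\binom{4t}{4k+3}\beta^2)\le 4t(\binom{4t}{4k}-\binom{4t}{4k+2}\beta^2)$. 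This is a quadratic inequality in $\beta^2$ whose constant term ($\binom{4t}{4k+1}\le 4t\binom{4t}{4k}$) and $\beta^2$-coefficient ($4t\binom{4t}{4k+2}\le\binom{4t}{4k+3}+\binom{4t}{2}\binom{4t}{4k+1}$) already have the right sign, while the spurious positive $\beta^4$-term is absorbed using $\beta^2<1/\binom{4t}{2}$. Finally, part (iv) follows by substituting \eqref{Eq:ImzOverRez} into the identity \eqref{Eq:z4t} and using $\Re(z)^{4t}>0$, which gives \eqref{Ineq: z4t}.

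The main obstacle is the bookkeeping in part (iii): lining up the one-parameter family of binomial-coefficient inequalities and, in particular, checking that the extra $\beta^4$ cross-term does not spoil the ratio bound $4t/\alpha$ under the single hypothesis $\binom{4t}{2}\beta^2<1$. By contrast, the modulus identity of part (i), the sign-fixing pairing of part (ii), and the reduction of part (iii) to a pointwise ratio via the Neyman--Pearson lemma are all routine.
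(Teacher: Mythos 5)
Your proof is correct and uses essentially the same strategy as the paper: expand $\Re(z^{4t})$ and $\Im(z^{4t})$ in $\beta=\Im(z)/\Re(z)$ via the binomial theorem, group the terms in blocks that become nonnegative under $\alpha>0$, and feed the resulting weighted sums into Lemma~\ref{SumsOfRatiosLemma}. Your verification of the pointwise bound $B_k/A_k\leq 4t/\alpha$ in part~(iii) is more laborious than the paper's (the paper simply asserts the maximum is attained at $k=0$ and evaluates there), though both of you pass over the shortest route: the numerator bracket is at most $1$, the denominator bracket is at least $\alpha$, and the prefactor $(4t-4k)/(4k+1)$ is at most $4t$.
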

\begin{proof}
  For any complex number $z$ and any natural number $t$, we have
  \begin{align*}
    \Bigl[\Re(z^{4t}) \Bigl\{
    1+\Bigl(\frac{\Im(z^{4t})}{\Re(z^{4t})}\Bigr)^2
    \Bigr\}^{\frac{1}{2}}\Bigr]^2 &=\Re(z^{4t})^2 \Bigl\{
    1+\Bigl(\frac{\Im(z^{4t})}{\Re(z^{4t})}\Bigr)^2 \Bigr\}
    =|z^{4t}|^2\,,
  \end{align*}
  and
  \begin{align*}
    (|z|^2)^{4t} &= \Bigl[ \Bigl\{
    1+\Bigl(\frac{\Im(z)}{\Re(z)}\Bigr)^2 \Bigr\}\Re(z)^2\Bigr]^{4t} =
    \Bigl[ \Bigl\{ 1+\Bigl(\frac{\Im(z)}{\Re(z)}\Bigr)^2 \Bigr\}^{2t}
    \Re(z)^{4t}\Bigr]^2\,.
  \end{align*}
  So equation \eqref{Eq:z4t2} holds.  This proves
  part~\eqref{RealPartOfPowerLemmaPart1}.

  To prove part~\eqref{RealPartOfPowerLemmaPart2}, we must show that if
  $\alpha>0$, then $\Re(z^{4t})$ is non-negative.  Suppose $z=a+\rmi
  b$ where $a$ and $b$ are real.  Then
  \begin{equation*}
    \Re(z^{4t})=b^{4t}+a^{4t}
    \sum_{s=0}^{t-1}\binom{4t}{4s}\Bigl(\frac{b}{a}\Bigr)^{4s}
    \Bigl[
    1-\frac{(4t-4s)(4t-4s-1)}{(4s+1)(4s+2)}\Bigl(\frac{b}{a}\Bigr)^2
    \Bigr]\,,
  \end{equation*}
  and
  \begin{align*}
    \Im(z^{4t}) & =a^{4t}
    \sum_{s=0}^{t-1} \Biggl\{ \binom{4t}{4s+1}
    \Bigl(\frac{b}{a}\Bigr)^{4s+1} \\
    & \qquad \qquad \times
    \Bigl[
    1-\frac{(4t-4s-1)(4t-4s-2)}{(4s+2)(4s+3)}\Bigl(\frac{b}{a}\Bigr)^2
    \Bigr] \Biggr\} \,.
  \end{align*}
  Since, for $s\in\{0,1,\dots,t-1\}$,
  \[ 
  \Bigl[
  1-\frac{(4t-4s)(4t-4s-1)}{(4s+1)(4s+2)}\Bigl(\frac{b}{a}\Bigr)^2
  \Bigr]
  \geq  1-\binom{4t}{2}\Bigl\{\frac{b}{a}\Bigr\}^{2}
  = \alpha>0\,,
  \]
  we have $\Re(z^{4t})>0$.  This proves
  part~\eqref{RealPartOfPowerLemmaPart2}.

  We prove part~\eqref{RealPartOfPowerLemmaPart3}.  If we put
  \begin{equation*}
    \lambda_0=1,\quad\lambda_s=\binom{4t}{4s}\Bigl(\frac{b}{a}\Bigr)^{4s}\,,
  \end{equation*}
  \begin{equation*}
    A_0=b^{4t}\,,\quad A_s=
    \Bigl[
    1-\frac{(4t-4s)(4t-4s-1)}{(4s+1)(4s+2)}\Bigl(\frac{b}{a}\Bigr)^2
    \Bigr]\,,
  \end{equation*}
  and
  \begin{equation*}
    B_0=0\,,\quad B_s=
    \Bigl(\frac{4t-4s}{4s+1}\Bigr)
    \Bigl[
    1-\frac{(4t-4s-1)(4t-4s-2)}{(4s+2)(4s+3)}\Bigl(\frac{b}{a}\Bigr)^2
    \Bigr]\,,
  \end{equation*}
  then
  \begin{equation*}
    \Re(z^{4t})=\sum_{s=0}^{t-1}\lambda_sA_s\,,
    \qquad\mbox{and}\qquad
    \Im(z^{4t})=
    \Bigl(\frac{b}{a}\Bigr)
    \sum_{s=0}^{t-1}\lambda_sB_s\,,
  \end{equation*}
  and
  \begin{align*}
    \max_{s}\Bigl\{\frac{B_s}{A_s}\Bigr\} &= \max_{s} \Bigl\{
    \Bigl(\frac{4t-4s}{4s+1}\Bigr) \frac { \Bigl[
      1-\frac{(4t-4s-1)(4t-4s-2)}{(4s+2)(4s+3)}\Bigl(\frac{b}{a}\Bigr)^2
      \Bigr] } { \Bigl[
      1-\frac{(4t-4s)(4t-4s-1)}{(4s+1)(4s+2)}\Bigl(\frac{b}{a}\Bigr)^2
      \Bigr] } \Bigr\}\,.
  \end{align*}
  The argument of the right-hand side is maximized when $s=0$.  So
  \begin{align*}
    \max_{s}\Bigl\{\frac{B_s}{A_s}\Bigr\} &= \frac{4t}{\alpha} \Bigl[
    1-\frac{1}{3}\binom{4t-1}{2}\Bigl(\frac{b}{a}\Bigr)^2 \Bigr] \leq
    \frac{4t}{\alpha} \,.
  \end{align*}
  Therefore, applying Lemma~\ref{SumsOfRatiosLemma}, we have
  \begin{equation*}
    \Bigl[
    \frac{\Im(z^{4t})}{\Re(z^{4t})}
    \Bigr]^2
    \leq
    \Bigl[\frac{4t}{\alpha}\Bigr]^2
    \Bigl[
    \frac{\Im(z)}{\Re(z)}
    \Bigr]^2\,.
  \end{equation*}
  This proves part~\eqref{RealPartOfPowerLemmaPart3}.  Finally,
  substituting \eqref{Eq:ImzOverRez} into \eqref{Eq:z4t} gives
  part~\eqref{RealPartOfPowerLemmaPart4}.
\end{proof}

\end{document}